\newtheorem{thm}[subsection]{Theorem}
\newtheorem{prop}[subsection]{Proposition}
\newtheorem{cor}[subsection]{Corollary}
\newtheorem{lem}[subsection]{Lemma}
\theoremstyle{definition}
\newtheorem{notation}[subsection]{Notation}
\newtheorem{defi}[subsection]{Definition}
\newcommand{\La}{\Lambda}
\newcommand{\vx}{\operatorname{vx}}
\newcommand{\f}{{\operatorname f}}
\newcommand{\rep}{{\operatorname {rep}}}
\newcommand{\w}{\widetilde}
\newcommand{\wh}{\widehat}
\renewcommand{\o}{\overline}
\newcommand{\la}{\lambda}
\newcommand{\Cl}{\operatorname{cc}}
\newcommand{\Bl}{\operatorname{Bl}}
\newcommand{\Irr}{\operatorname {Irr}}
\newcommand{\Syl}{\operatorname {Syl}}
\newcommand{\calP}{\mathcal P}
\newcommand{\calD}{\mathcal D}
\newcommand{\calQ}{\mathcal Q}
\newcommand{\calS}{{\mathcal S}}
\newcommand{\Cent}{\operatorname C}
\newcommand{\Z}{\operatorname Z}
\newcommand{\dz}{\operatorname{dz}}
\newcommand{\bl}{\operatorname{bl}}
\newcommand{\NNN}{\operatorname {N}}
\newcommand{\Aut}{\operatorname {Aut}}
\newcommand{\Inn}{\operatorname {Inn}}
\newcommand{\TT}{\mathbb T}
\newcommand{\IBr}{\operatorname {IBr}}
\newcommand{\CC}{\mathbb C}
\newcommand{\ZZ}{\mathbb Z}
\newcommand{\GL}{\operatorname {GL}}
\newcommand{\mathcalR}{\mathcal R}
\def\spann<#1>{\left\langle#1\right\rangle}
\newcommand{\Rad}{\operatorname{Rad}}
\newcommand{\Aphi}{{A(\phi)}}
\newcommand{\Ephi}{\w\phi}
\newcommand{\EOphi}{\w\phi'}
\newcommand{\mathcalO}{\mathcal O}
\newcommand{\Mat}{\operatorname{Mat}}
\newcommand{\rank}{ {\mathrm{rank}}_{\mathcal O} }
\newcommand{\enumroman}{\renewcommand{\labelenumi}{(\roman{enumi})}   
\renewcommand{\theenumi}{\thesubsection(\roman{enumi})}}
\newcommand{\enumalph}{\renewcommand{\labelenumi}{(\alph{enumi})} 
\renewcommand{\theenumi}{\thesubsection(\alph{enumi})}}
\title[The inductive AM and BAW conditions for blocks with cyclic defect]{
The inductive Alperin-McKay and blockwise Alperin weight 
conditions for blocks with cyclic defect groups}
\author{Shigeo Koshitani and Britta Sp\"ath}
\thanks{
The first and second authors have been supported, respectively,
by the Japan Society for Promotion of Science (JSPS), 
Grant-in-Aid for Scientific Research (C)23540007, 2011--2014,
 the DFG Priority Programm, SPP 1388 and the ERC Advanced Grant 291512.       
}
\subjclass[2010]{20C20; 20C15; 20C25}
\keywords{Cyclic defect groups, Alperin-McKay conjecture,
Alperin weight conjecture }
\address{Department of Mathematics, Graduate School of Science, 
Chiba University, 1-33 Yayoi-cho, Inage-ku, Chiba, 263-8522, Japan}
\email{koshitan@math.s.chiba-u.ac.jp}
\address{FB Mathematik, TU Kaiserslautern, Postfach 3049, 
67653 Kaiserslautern, Germany.}
\email{spaeth@mathematik.uni-kl.de}
\begin{document}

\begin{abstract}
We verify the inductive blockwise Alperin weight 
(BAW)    
and 
the inductive   
Alperin-McKay
(AM)   
conditions introduced by the second author for blocks 
of finite quasisimple groups with cyclic defect 
groups. Furthermore we establish a criterion that describes 
conditions under which the inductive AM condition 
for blocks with abelian defect groups
implies the inductive BAW condition for those blocks.
\end{abstract}

\maketitle

\section{Introduction}
Two of the most important counting conjectures 
in the representation theory of finite groups are 
the Alperin-McKay and the Alperin weight conjectures concerned with invariants of blocks. 
While the conjectures are known only for specific classes of 
blocks of finite groups, there is now a new approach that might 
lead to general proofs of the conjectures.

In \cite{Spaeth_red_BAW} and \cite{Spaeth_AM_red} 
it is shown that these conjectures hold if every non-abelian 
simple group satisfies the inductive blockwise Alperin weight (BAW) condition 
and the inductive Alperin McKay (AM) condition. 
In the case of the Alperin weight conjecture 
the proof also leads to an approach for blocks with a given defect group. 
Recall that for any finite group $G$ and a block $B$ of $G$ with defect group $D$ 
the blockwise Alperin weight conjecture holds, 
if the inductive blockwise Alperin weight (BAW) condition holds 
with respect to $ \mathcalR_D$ for all simple groups involved in $G$, 
where $\mathcalR_D$ is the set of all finite groups involved in $D$, 
see Theorem B of \cite{Spaeth_red_BAW}. 
(A group $D_1$ is {\it involved in $D$} if there exist groups 
$H_1\lhd H_2\leq D$ such that $H_2/H_1$ is isomorphic to $D_1$.) 

Analogously one can establish blockwise versions of 
the inductive Alperin-McKay (AM) condition, see 
Definitions \ref{def4_2} and \ref{def_ind_AM_block} below. 
We verify those conditions for blocks with cyclic defect groups, 
and also verify the inductive BAW condition for nilpotent blocks.

In a few cases the inductive conditions have been checked 
for specific finite simple groups and with respect to certain $p$-groups, 
see \cite{Sc12}, \cite{Spaeth_AM_red}, \cite{Spaeth_red_BAW}, and \cite{CS12}. 
The inductive BAW condition for a prime $p$ could be verified for most 
sporadic finite simple groups, 
the alternating groups and finite simple groups of Lie type 
defined over a field of characteristic $p$, see 
\cite{Breuer}, \cite{Malle_AWC_Alt}, \cite{Spaeth_red_BAW}, and \cite{Sc12}. 
In the present paper we prove that the inductive blockwise Alperin 
weight (BAW) condition and the inductive Alperin-McKay (AM) condition hold
for all blocks of finite quasisimple groups with cyclic defect groups.

While in most of the anterior proofs 
the knowledge on the representation theory of the specific 
quasisimple group was a key,
we apply here (well-)known results on blocks with cyclic defect 
groups for the proof. 
  
\begin{thm} \label{thm_BAW_cycl} \label{thm1_1}
Let $S$ be a finite non-abelian simple group, $G$ its universal covering group and $\o G$ its universal $p'$-covering group.
Then the inductive Alperin-McKay (AM) condition from Definition \ref{def_ind_AM_block} 
holds for all blocks of $G$ with cyclic defect groups and the
inductive blockwise Alperin weight (BAW) condition from Definition \ref{def4_2} 
holds for every  block of $\o G$ with cyclic defect groups. 
\end{thm}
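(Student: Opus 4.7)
My approach exploits the combinatorial rigidity of blocks with cyclic defect groups, whose structure is completely encoded by the Brauer tree. Fix such a block $B$ of $G$ with cyclic defect group $D$, let $N := \NNN_G(D)$, and let $b$ denote its Brauer correspondent in $N$. The objective is to construct an $\Aut(G)_B$-equivariant bijection $\Omega : \Irr_0(B) \to \Irr_0(b)$ satisfying the extendibility and cocycle conditions of Definition~\ref{def_ind_AM_block}, and, for the BAW statement, an analogous bijection between $\IBr(B)$ and the weights attached to $B$.

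For the construction of $\Omega$ I would use the canonical bijection coming from Dade's theory of blocks with cyclic defect groups: the non-exceptional ordinary characters of $B$ are indexed by the edges of the Brauer tree of $B$, while the characters of $b$ are indexed by the edges of its corresponding star tree. Opening the tree to a star provides $\Omega$ on non-exceptional characters, and the exceptional characters correspond naturally through restriction to $D\Cent_G(D)$ and the action of the (cyclic) inertial quotient $E := N/D\Cent_G(D)$. Because the construction is canonical at the level of Brauer trees, equivariance under $\Aut(G)_B$ should follow: any automorphism stabilizing $B$ induces an automorphism of the Brauer tree, and $\Omega$ intertwines the induced actions on both sides.

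The main obstacle is the cohomological part of the inductive condition, which requires $\chi$ and $\Omega(\chi)$ to extend to compatible overgroups in central extensions with matching $2$-cocycles. Here I would reduce to the local picture via the source algebra, which for cyclic defect blocks is a twisted group algebra over $D \rtimes E$; Clifford theory applied to the cyclic structure of $E$ should then allow one to identify the cocycles attached to $\chi$ and $\Omega(\chi)$. For the BAW statement on $\o G$, one may either argue directly that the edges of the Brauer tree give a canonical equivariant bijection between $\IBr(B)$ and the weights (matching Green correspondents of simple modules with weights supported on $D$), or apply the criterion announced in the abstract that deduces inductive BAW from inductive AM in the abelian defect case; the hypotheses of that criterion should simplify considerably in the cyclic setting, particularly because Brauer characters are insensitive to $p'$-central extensions, which is why the $p'$-covering group $\o G$ is the natural setting for BAW.
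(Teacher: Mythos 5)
Your broad strategy is recognizable in the paper: the bijection $\Omega$ between $\Irr_0(B)$ and $\Irr_0(b')$ does come from Dade's theory (non-exceptional characters matched via $p$-rationality and successive Green correspondence through the subgroups $D_i$, exceptional characters matched through their parametrization by $\Irr(D)$ and the $*$-construction), and the BAW statement is indeed deduced from the AM statement via the criterion of Theorem~\ref{thm1_2}, using the unitriangularity of the decomposition matrix (Theorem~\ref{dec_matrix_unitriangular}). But there are two genuine gaps.

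First, your entire plan for Condition~\ref{def6_2iii} --- the existence of compatible extensions $\w\chi$ and $\w\chi'$ with $\bl(\w\chi_J)=\bl((\w\chi')_{\o M\NNN_J(\o D)})^J$ for \emph{every} intermediate $J$ --- is a single sentence invoking source algebras and ``Clifford theory applied to the cyclic structure of $E$.'' This is precisely the part the introduction flags as the main work, and the paper does not touch source algebras at all. Instead it builds extensions prime-by-prime: Propositions~\ref{prop5_11}, \ref{prop5_12}, \ref{prop5_8}, \ref{prop5_17} supply extensions over groups $K_q$ with $K_q/N\in\Syl_q$, and Proposition~\ref{prop6_5} (using Brauer's characterization of characters to glue a correcting linear character $\epsilon$) assembles these into a global extension satisfying the block condition. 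The block-compatibility demand for all intermediate $J$ is not just a matter of ``identifying cocycles''; it requires the Dade--Murai theory of block extensions and the results of \cite{KoshitaniSpaeth1}, and a proposal that does not address how those block equalities are to be propagated across all $J$ is missing the heart of the argument.

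Second, your construction of $\Omega$ implicitly assumes the odd-$p$ picture, where non-exceptional characters coincide with $p$-rational ones and the inertial quotient $E$ is nontrivial. For $p=2$, every cyclic-defect block is nilpotent, $E$ is trivial, and $\Irr(B)$ contains exactly \emph{two} $2$-rational characters; as a result there is no canonical choice of bijection, and the paper must produce and distinguish two candidate maps $\Lambda_{b,D}$ and $\Lambda'_{b,D}$ (Proposition~\ref{prop5_13}) and argue carefully which one to use for which character when verifying Condition~\ref{def6_2iii}. Your proposal gives no hint that $p=2$ must be handled separately, and the equivariance claim, which you deduce from ``a canonical construction,'' is exactly what fails to be automatic there. (A smaller point: non-exceptional ordinary characters are indexed by the non-exceptional \emph{vertices} of the Brauer tree, not by its edges; the edges index $\IBr(B)$.)
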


This extends the earlier results from \cite[Corollary 8.3(b)]{Spaeth_AM_red} and \cite[Proposition 6.2]{Spaeth_red_BAW} on blocks with cyclic defect groups, where in addition the outer automorphism groups had to be cyclic.
In Corollary \ref{cor_explizit} we give a list of cases where for a prime $p$ and
a finite simple group $S$ of Lie type, the mentioned inductive conditions hold.
We suspect that Theorem \ref{thm_BAW_cycl} implies the
inductive condition of Dade's conjecture recently proposed by the second author, see \cite{Spaeth_red_Dade}. 
The result of \cite{Marcus} on Dade's inductive Conjecture
for blocks with cyclic defect groups seems related but does not directly 
provide the more involved requirements of the inductive AM condition.

The main part of the proof is devoted to prove that the 
inductive Alperin-McKay condition holds, especially the condition on the existence of 
certain extensions. Applying the following criterion we show that also the inductive blockwise Alperin weight condition holds for the same block.

\begin{thm}\label{thm_inductiveAM_inductiveBAW}\label{thm1_2}
Let $S$ be a finite non-abelian simple group, $G$ its 
universal covering group and $B$ a $p$-block of $G$ with abelian defect group $D$. 
Assume that $B$ and $D$ satisfy the following:
 \begin{enumerate}
 \item[(i)] The inductive AM condition holds for $B$ with 
a group $M := \NNN_G(D)$ and a bijection 
$\Lambda_B^G: \Irr_0(B)\rightarrow \Irr_0(B')$, 
where $B'\in\Bl(M)$ with $(B')^G=B$.

 \item[(ii)] The decomposition matrix $C'$ associated with 
$\calS:=(\Lambda_B^G)^{-1}(\{ \chi'\in\Irr(B') 
\mid D\leq \ker(\chi')\})$ 
is a unitriangular matrix after suitable ordering of 
the characters, where  $B'\in\Bl(M)$ is the Brauer correspondent  of $B$. 
 \end{enumerate}  
Let $\o B$ be the unique $p$-block of the universal 
$p'$-covering group of $S$ dominated by $B$. 
Then the inductive BAW condition holds for $\o B$.
\end{thm}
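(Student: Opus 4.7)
My plan is to transfer the inductive AM data on ordinary characters to the inductive BAW data on Brauer characters by means of the unitriangular decomposition matrix in hypothesis (ii). The basic picture is a diagram of bijections
\[
 \IBr(\o B) \;\longleftrightarrow\; \IBr(B) \;\stackrel{C'}{\longleftrightarrow}\; \calS \;\stackrel{\Lambda_B^G}{\longrightarrow}\; \Lambda_B^G(\calS) \;\longleftrightarrow\; \mathrm{Wt}(\o B)/{\sim}
\]
and then the point is to check that the composite $\Omega$ is equivariant under $\Aut(\o G)_{\o B}$ and extends projectively as required by the inductive BAW condition.

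First I would analyse $\Lambda_B^G(\calS)$. By definition it is the set of characters $\chi'\in\Irr(B')$ with $D\leq \ker(\chi')$, and since $D$ is a defect group of $B'$ these are precisely the inflations of the defect zero characters of $M/D$ lying in the block $\o{B'}$ of $M/D$ dominated by $B'$. Such characters are exactly the second components of the $M$-weights for $B$, so $\Lambda_B^G(\calS)$ is canonically in bijection with the set of $G$-conjugacy classes of weights of $B$. Combining with hypothesis (ii), which supplies a canonical bijection between $\calS$ and $\IBr(B)$ via the diagonal $1$'s of the unitriangular $C'$, and with the standard identifications $\IBr(B)\leftrightarrow\IBr(\o B)$ and $\mathrm{Wt}(B)\leftrightarrow\mathrm{Wt}(\o B)$ coming from the fact that $\ker(G\twoheadrightarrow\o G)$ is a central $p'$-subgroup on which $B$ affords a single linear character, we obtain a bijection $\Omega:\IBr(\o B)\to\mathrm{Wt}(\o B)/{\sim}$.

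To obtain equivariance, I would argue that every step in the above chain is $\Aut(G)_B$-equivariant: the AM-bijection $\Lambda_B^G$ by hypothesis (i), the inflation/deflation step by functoriality, and the bijection $\calS\leftrightarrow \IBr(B)$ because a unitriangular decomposition matrix is preserved by any algebra automorphism that stabilises the ordering (and the hypothesis forces a unique unitriangular shape once the ordering on $\calS$ is fixed compatibly with $\IBr(B)$, which one can arrange to be $\Aut(G)_B$-stable by choosing an orbit-wise ordering). Descending to $\o G$ then gives an $\Aut(\o G)_{\o B}$-equivariant $\Omega$. The compatibility with central characters is automatic since both sides lift the same linear character of the centre of $\o G$.

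The main obstacle is the cohomological/extension clause of the inductive BAW condition. For each $\phi\in\IBr(\o B)$, one must produce an extension to its inertia group inside $\o G\rtimes \Aut(\o G)_\phi$ that matches, up to projective equivalence, an extension of the weight character on the normaliser side. Here I would start from the AM extensions $\Echi$ of $\chi\in\calS$ and $\EOchi$ of $\Lambda_B^G(\chi)$ given by the inductive AM condition, restrict to the inertial stabilisers in $G\rtimes\Aut(G)_B$ and $M\rtimes\Aut(G)_{B,D}$, and then pass to the reduction mod~$p$. Because $C'$ is unitriangular, the reduction mod~$p$ of $\chi$ is $\phi$ plus $\IBr(B)$-classes of strictly smaller index, so $\Echi \bmod p$ yields a well-defined extension of $\phi$ in the Grothendieck group that one can refine (using the defect zero character on the weight side, which lifts uniquely) to a genuine $\IBr$-extension. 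The weight-side extension lifts uniquely from the defect zero character of $M/D$ in question. Matching the two cohomology classes in $H^2$ of the relevant quotients reduces to observing that reduction mod~$p$ induces an isomorphism on the relevant $p'$-parts of these $H^2$'s when restricted to unitriangularly compatible characters; this is where hypothesis (ii) does its real work, converting the AM $2$-cocycle into the BAW $2$-cocycle.
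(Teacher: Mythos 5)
Your setup (transferring $\Lambda_B^G$ to a weight bijection via the diagonal of the unitriangular matrix, and noting that the Brauer characters factor through the $p'$-covering group) follows the same lines as the paper, and the equivariance argument is essentially the one the paper attributes to the proof of Theorem~7.4 of \cite{CS12}.

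However, the treatment of the extension clause has a genuine gap. Condition~\ref{def_BAWC}(iii)(d) of the inductive BAW condition is not a statement about matching classes in $H^2$ of quotient groups; it is the explicit requirement that $\bl(\Ephi_J)=\bl\bigl((\EOphi)_{\NNN_J(\o Q)}\bigr)^J$ for \emph{every} intermediate subgroup $\o G\leq J\leq A$. Your final paragraph replaces this with a vague claim that ``reduction mod~$p$ induces an isomorphism on the relevant $p'$-parts of these $H^2$'s,'' which is not what needs to be shown and is not argued. The paper's actual argument is much more direct and does not pass through cohomology at all: since $\o\phi_i$ is $A_i$-invariant and occurs with multiplicity one in $(\o\chi_i)^0$ (by unitriangularity), the Brauer character $(\w\chi_i)^0$ has a \emph{unique} irreducible constituent $\psi$ with $\psi_{\o G}=\o\phi_i$, and $\psi$ is automatically an extension of $\o\phi_i$. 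Because $\psi_J$ is a constituent of $(\w\chi_{i})_J^0$ for every such $J$, one has $\bl(\psi_J)=\bl((\w\chi_i)_J)$ outright, so the block condition for $\psi$ and the (automatically irreducible) Brauer extension $(\w\chi'_i)^0$ on the weight side is inherited verbatim from the block condition supplied by the inductive AM condition for $\w\chi_i$ and $\w\chi'_i$. You gesture at the multiplicity-one phenomenon but then propose to ``refine'' a virtual character ``in the Grothendieck group'' using a defect-zero lift on the wrong side of the correspondence; no such refinement step is needed or well-defined, and it does not give the required statement for all $J$. Without the observation that constituents of $(\w\chi_i)^0$ transport blocks directly, your proof does not establish part (iii)(d).
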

Exploiting the fact that the defect groups of nilpotent blocks of quasisimple groups are abelian according to An and Eaton,
we prove in addition that the inductive BAW conditions holds 
for nilpotent blocks.

\begin{thm}\label{thm1_3}
The inductive BAW condition 
holds for nilpotent blocks of finite quasisimple groups.
\end{thm}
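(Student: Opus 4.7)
The plan is to deduce Theorem \ref{thm1_3} from Theorem \ref{thm1_2} by combining the An--Eaton classification of nilpotent blocks of quasisimple groups with the Brou\'e--Puig structure theory of nilpotent blocks. Let $\o B$ be a nilpotent $p$-block of the universal $p'$-covering group $\o G$ of a non-abelian finite simple group $S$, and let $B$ be the $p$-block of the universal covering group $G$ that dominates $\o B$. Since domination preserves nilpotency, $B$ is itself nilpotent, and by An--Eaton its defect group $D$ is abelian, placing us in the setup of Theorem \ref{thm1_2}.

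I would first dispose of hypothesis (ii). By Brou\'e--Puig, both $B$ and its Brauer correspondent $B' \in \Bl(\NNN_G(D))$ are source-algebra equivalent to $\mathcalO D$; in particular $B'$ has a unique irreducible Brauer character, and $\Irr(B')$ is canonically parametrized by $\Irr(D)$ via $\lambda \mapsto \chi'_\lambda$. A standard consequence of this parametrization is that $\chi'_\lambda$ has $D\leq\ker(\chi'_\lambda)$ exactly when $\lambda = 1_D$, so the set $\{\chi'\in\Irr(B')\mid D\leq\ker(\chi')\}$ is a singleton. Consequently $\calS$ is a singleton, the associated matrix $C'$ is the $1\times 1$ matrix with entry $1$, and unitriangularity holds trivially.

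The substantive step is hypothesis (i), the inductive AM condition for $B$. For those nilpotent blocks whose defect group is cyclic, this is delivered by Theorem \ref{thm1_1}. For the remaining nilpotent blocks, the An--Eaton classification reduces the problem to a short and explicit list of quasisimple groups and nilpotent blocks with non-cyclic abelian defect, to be treated individually. The source-algebra equivalence $B\sim \mathcalO D\sim B'$ supplies a canonical height-preserving bijection $\Irr(B)\to\Irr(B')$ which is equivariant under the automorphisms of $G$ stabilizing $B$; this furnishes the required bijection $\Lambda_B^G$ and controls the central characters.

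The main obstacle is the character-extension clause of Definition \ref{def_ind_AM_block}: for each $\chi \in \Irr_0(B)$ one has to construct compatible extensions of $\chi$ and of $\Lambda_B^G(\chi)$ to suitable central extensions involving $\Aut(G)_B$. The rigidity of the source algebra $\mathcalO D$ (every $\sigma \in \Aut(G)_B$ acts on it up to inner automorphism) should provide these extensions in principle, but transferring source-algebra equivariance into the required character extensions is a delicate cohomological task which must be carried out explicitly for each of the exceptional quasisimple groups isolated by An and Eaton.
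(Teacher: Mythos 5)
Your reduction via Theorem~\ref{thm_inductiveAM_inductiveBAW} has a genuine gap in its hypothesis (i). That theorem requires the inductive AM condition to already hold for $B$, but the paper (and you) only establish it for blocks with \emph{cyclic} defect. For a nilpotent block with non-cyclic abelian defect group you write that the An--Eaton classification reduces the problem ``to a short and explicit list\dots to be treated individually'' and that source-algebra rigidity ``should provide these extensions in principle.'' Neither assertion is a proof: Theorem~\ref{thm_An_Eaton} as used here says only that the defect group is abelian, and the extension clause of Definition~\ref{def_ind_AM_block}(iii) must be verified for all of $\Irr_0(B)\cong\Irr(D)$, a large set of ordinary characters when $D$ is non-cyclic. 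The ``delicate cohomological task'' you flag is exactly the missing content; without it the argument is circular, since the inductive AM condition for nilpotent blocks is at least as hard as what you are trying to prove.

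The paper's actual proof is simpler precisely because it avoids the inductive AM condition entirely and attacks Definition~\ref{def4_2} directly, exploiting the fact that the BAW condition sees only \emph{Brauer} characters and a nilpotent block has exactly one. Concretely: by Theorem~\ref{thm_An_Eaton} the defect group $D$ is abelian, so $\dz(\NNN_G(Q)/Q,B)\ne\varnothing$ only for $Q=D$ (handling part (i) of Definition~\ref{def4_2}); by Brou\'e--Puig $\IBr(B)$ and $\dz(\NNN_G(D)/D,B)$ are both singletons, so the bijection $\Omega_D^G$ is forced and trivially $\Aut(G)_B$-equivariant; and part (iii) is then a matter of producing one compatible pair of extensions, which is done via Lemma~\ref{rem_exist_A} (constructing $A$), the K\"ulshammer--Puig input packaged in Lemma~\ref{prop3_7}, Theorem~C(c)(2) of \cite{KoshitaniSpaeth1}, and the patching Lemma~\ref{lem4_2}. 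Your hypothesis (ii) discussion is fine — $\calS$ is indeed a singleton and $C'=(1)$ — but the route you propose would require far more information than the direct BAW verification, and that information is not available.
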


A main ingredient is the study of blocks with cyclic defect groups 
due to various authors. Nevertheless the existing results does not allow
an immediate approach to
Condition (iii) of the inductive AM condition from Definition \ref{def_ind_AM_block}, hence 
more considerations are required. 
Furthermore we use 
\cite{KoshitaniSpaeth1} to simplify the checking of the 
extendibility part of the conditions.
This provides an example of how the conditions, especially 
the last technical part on the existence of certain extensions 
with additional properties, can be verified. 
In the end we illustrate the use of Theorem \ref{thm_BAW_cycl} by listing some simple groups 
of Lie type where  the inductive conditions hold. 

This article is structured in the following way: We introduce
the main notation 
in Section \ref{sec_not}. In Section \ref{sec4} we prove 
the inductive BAW condition for nilpotent blocks and thereby give the proof of
Theorem \ref{thm1_3}. 
After recalling various results about blocks with cyclic
defect groups in Section \ref{sec5} we extend them
in Section \ref{sec5_beyond} in order to determine the Clifford theory of 
characters in blocks with cyclic defect groups
by means of local data. We use that in Section \ref{sec6} 
to verify the inductive AM condition for those blocks. 
We finish in Section \ref{sec7} by proving Theorem \ref{thm1_2}
and thereby a criterion when the inductive 
AM condition implies the inductive BAW condition.
We conclude with the proof of the inductive BAW 
condition for blocks with cyclic defect groups 
as an application of Theorem \ref{thm1_2}.

\section{Notation}\label{sec_not}
In this section we explain most of the used notation. 
For characters and blocks we use mainly the notation of 
\cite{NagaoTsushima}, and \cite{Navarro} respectively. 

Let $p$ be a prime. Let $(\mathcal K, \mathcalO, k)$ be a 
$p$-modular system, that is "big enough" with respect to 
all finite groups occurring here. That is to say, 
$\mathcalO$ is a complete discrete valuation ring of
rank one such that its quotient field $\mathcal K$ is 
of characteristic zero, 
and its residue field $k=\mathcal O/\mathrm{rad}(\mathcal O)$ is of
characteristic $p$, and that $\mathcal K$ and $k$ are
splitting fields for all finite groups occurring in this paper.
We denote the canonical epimorphism from $\mathcal O$ to $k$ 
by $^*: \mathcal O\rightarrow k$. 

In this paper all considered groups are finite. For a finite group $G$
we denote by $\Bl(G)$ the set of $p$-blocks of $G$. 
For $N\lhd G$ and $b\in \Bl(N)$ and we denote by $\Bl(G\mid b)$ 
the set of $p$-blocks of $G$ covering $b$. We write also 
$\Bl(G \mid D)$ for the set of $p$-blocks of $G$ with defect group $D$.  

For a character $\phi\in\IBr(G)\cup \Irr(G)$ we denote by $\bl(\phi)$ 
the $p$-block $\phi$ belongs to. For $N\lhd G$ we say that 
$\o B\in\Bl(G/N)$ is {\it dominated} 
by $B\in\Bl(G)$ and write 
$\o B\subseteq B$, if all irreducible characters of $\o B$ lift to 
characters of $B$, see \cite[p.198]{Navarro} or 
\cite[p.360]{NagaoTsushima}.

We denote by $\dz (G)$ the set of defect zero 
irreducible ordinary characters of $G$.
When $Q$ is a $p$-subgroup of $G$ and $B \in \Bl(G)$,
we denote by $\dz (\NNN_G(Q)/Q, B)$ the set of defect zero 
characters $\o\chi\in \Irr(\NNN_G(Q)/Q)$ such that 
$\bl(\o\chi)$ is dominated by a $p$-block $B'\in\Bl(\NNN_G(Q))$ 
with $(B')^G = B$. 

By $\Cl_G(x)$ we denote the conjugacy class of $G$ containing 
$x \in G$, and for any subset $X\subseteq G$ we denote by $X^+$ 
the sum $\sum_{x\in X} x$ in $\mathcal O G$ or $k G$, respectively. 
We write $G^0$ for the set of all $
p$-regular elements of $G$. The restriction of a character 
$\chi$ of $G$ to $G^0$ is denoted by $\chi^0$. 
For $B\in\Bl(G)$ we denote by 
$\la_{B}: \Z(kG)\rightarrow k $ the associated central function,  see 
page 48 of \cite{Navarro}. For $\phi\in\IBr(B)$ and $\chi\in\Irr(B)$ 
we write also $\la_\phi$ and $\la_\chi$ instead of $\la_B$. If $H$ is 
a subgroup of a finite group $G$ and if $\chi$ and $\nu$ are 
characters of $G$ and $H$ respectively, then we denote by $\chi_H$ and 
$\nu^G$ the restriction of $\chi$ to $H$ and 
the induction of $\nu$ to $G$, respectively.
By $\Irr(G\mid \nu)$ we denote the set of irreducible
constituents of $G$ and if $H\lhd G$ we write $\Irr(H\mid \chi)$
for the constituents of $\chi_{H}$.
For a group $A$ acting on a set $X$ we denote by $A_{X'}$ the 
stabilizer of $X'$ in $A$, where $X'\subseteq X$.

Later we use also vertices and other methods seeing blocks as algebras. 
The thereby used notation will be introduced later.

\section{The inductive bockwise Alperin weight condition for nilpotent blocks}\label{sec4}
In this section we introduce a condition on blocks of quasisimple groups, 
that partitions the inductive blockwise Alperin weight condition (BAW condition for short) in Definition \ref{def4_2} 
and prove that the inductive BAW condition holds 
for all nilpotent blocks of finite quasisimple groups. 

Recall that the {\it universal $p'$-covering group} $G$ of 
a finite perfect 
group $S$ is the quotient $Y/\Z(Y)_{p}$ where $Y$ is the universal 
covering group of $S$ and $\Z(Y)_{p}$ the Sylow $p$-subgroup of 
$\Z(Y)$. (Recall that the universal covering group of a perfect group $S$
is a perfect group $Y$ such that $Y/\Z(Y)\cong S$ and $|\Z(Y)|$ is 
maximal.)

In the following let $\mathcal R$ be a certain set of $p$-groups.
We denote by $\Bl_\mathcalR(G)$ the set of all $p$-blocks of $G$ 
having a defect group in $\mathcal R$, and $\IBr_\mathcal R(G)$ 
the set of the irreducible Brauer characters belonging to a block in $\Bl_{\mathcal R}(G)$. 
For example $\Bl_{\{1\}}(G)$ denotes the set of defect zero blocks of 
$G$, and $\IBr_{\{1\}}(G)$ is then the restriction of defect zero 
characters of $G$. For a $p$-subgroup $Q\leq G$ we 
denote by $\dz(\NNN_G(Q)/Q, \Bl_\mathcal R(G) )$ characters 
$\o \chi\in\dz(\NNN_G(Q)/Q)$ whose lift $\chi\in\Irr(\NNN_G(Q))$ 
satisfies $ \bl(\chi)^G\in  \Bl_\mathcal R(G)$. In the following let 
$\mathcal R_c$ be the set of all finite cyclic $p$-groups.

In \cite{Spaeth_red_BAW} two versions of the inductive BAW condition 
are presented, one general version and a version relative to a 
set of finite $p$-groups, see Definitions 4.1 and 5.17 of 
\cite{Spaeth_red_BAW}. Additionally we present here a blockwise 
version of the condition and give its relation to the established 
conditions.

\begin{notation}
We denote by $\Rad(G)$ the set of radical $p$-subgroups of a finite group $G$, 
and $\Rad(G)/\sim_G$ denotes an arbitrary $G$-{\it transversal} 
of $\Rad(G)$. Recall, for a group $A$ acting on $G$ we denote by 
$A_B$ the stabilizer of $B\in \Bl(G)$.
For $N\lhd G$,  $\phi\in\IBr(N)$ and $\psi\in\IBr(G)$
we denote by $\IBr(G\mid \phi)$ the set of irreducible    
constituents of $\phi^G$, and by $\IBr(N\mid \psi)$  the set
of irreducible constituents of $\psi_N$.
\end{notation}

\begin{defi}[Inductive BAW condition for a $p$-block,
see also {\cite[Definition 5.17]{Spaeth_red_BAW}}]    
\label{def_BAWC_ind_Cond}\label{def4_2}\label{def_BAWC}
Let $p$ be a prime, $S$ a finite non-abelian simple group, 
$G$ the universal $p'$-covering group of $S$, and $B\in\Bl(G)$. 
We say that the {\it inductive BAW condition holds for $B$}, 
if the following statements are satisfied:
\enumroman
\begin{enumerate}
\item \label{def_BAWC_part}
There exist subsets 
$\IBr(B \mid Q) \subseteq \IBr(B)$ for $Q\in\Rad(G)$  with

\begin{enumerate}
\item 
$\IBr(B\mid Q)^a = \IBr(B\mid Q^a)$ for every $a \in \Aut(G)_B$, and
\item 
\[\IBr(B)=\bigcup_{Q\in \Rad(G)/\sim_G}^. \IBr(B\mid Q)
\qquad \text{ (disjoint union)}.\]
\end{enumerate}
\item \label{def_BAWC_bij} 
For every $Q\in \Rad(G)$ there exists a bijection 
\[ \Omega_Q^G: \IBr(B\mid Q)\longrightarrow \dz(\NNN_G(Q)/Q , B) ,\]
such that $\Omega_Q^G(\phi)^a= \Omega_{Q^a}^G(\phi^a)$ 
for every $a\in \Aut(G)_B$ and $\phi\in \IBr(B\mid Q)$.
\item \label{def_BAWC_c}
For every $Q \in \Rad(G)$ and every character 
$\phi\in \IBr(B \mid Q)$, 
there exist a finite group $A(\phi)$ 
and characters $\Ephi\in \IBr(\Aphi)$ 
and $\EOphi \in \IBr(\NNN_\Aphi(\o Q))$ 
where $\o Q:= QZ/Z$ and $Z := \Z(G) \cap \ker \phi$,
with the following properties:
\begin{enumerate}
\item \label{def_BAWC_c_group}
For  $\o G:= G/Z$ 
the group $A:=\Aphi$ 
satisfies $\o G\lhd A$, $A/\Cent_{A}(\o G)\cong \Aut(G)_\phi$, 
$\Cent_A(\o G)=\Z(A)$ and $p\nmid |\Z(A)|$.
\item 
$\Ephi \in \IBr(A)$ is an extension of $\o \phi\in\IBr(\o G)$, 
that is associated with $\phi$, i.e. $\o \phi$ lifts to $\phi$.
\item 
The character $\EOphi \in \IBr(\NNN_A(\o Q))$ is an extension 
of the character of $\NNN_{\o G}(\o Q)$ associated with 
$\Omega_Q^G(\phi)^0\in \IBr(\NNN_{G}(Q)/Q)$, i.e. $\EOphi$ lifts to $\Omega_Q^G(\phi)^0$.
\item \label{def_BAWC_cohom_block}
For every $J$ with 
$\o G\leq J\leq A$ 
the characters $\Ephi$ and $\EOphi$ satisfy 
 \[\bl(\Ephi_{J})=\bl\Big((\EOphi)_{\NNN_J(\o Q)}\Big)^{J}.\] 
\end{enumerate}
\item \label{def_BAWC_triv}
If $B$ is of defect zero, then 
$\Omega_1^G(\chi^0)=\chi$ for every $\chi\in \Irr(B)$ and 
$\Ephi=\EOphi$ for every $\phi\in \IBr (B\mid 1)$.
\end{enumerate}
\enumalph
\noindent(Note that Condition (iii)(c) above makes sense since for every $Q\in\Rad(G)$ and $\phi\in\IBr(B\mid Q)$
the sets $\IBr(\Z(G)\mid \phi)$ and $\IBr(\Z(G)\mid \Omega_Q^G(\phi)^0)$  coincide.
This follows from the fact that 
 $\bl(\phi)$ and $\bl(\Omega_Q^G(\phi))$ cover the same block of the $p'$-group $Z(G)$ because of $\bl(\phi)=\bl(\Omega_Q^G(\phi))^G$.)
\end{defi}

This definition of the inductive BAW condition for a block 
is a partitioning of the one established in 
\cite[Definition 4.1]{Spaeth_red_BAW}.

\begin{lem}
Let $S$ be a finite non-abelian simple group, 
$G$ its universal $p'$-covering group 
and $\mathcal R$ a set of $p$-groups. Assume that for some 
$\Aut(G)$-transversal
in $\Bl_\mathcal R(G)$ the inductive BAW 
condition holds. Then the inductive BAW condition from 
Definition 5.17 of \cite{Spaeth_red_BAW}  
holds for $S$ with respect to $\mathcal R$.
\end{lem}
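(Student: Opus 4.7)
The plan is to obtain the data required by Definition 5.17 of \cite{Spaeth_red_BAW} for $S$ with respect to $\mathcal R$ by assembling the per-block data supplied by Definition \ref{def4_2} for the blocks in the given $\Aut(G)$-transversal $\mathcal T \subseteq \Bl_\mathcal R(G)$, and then transporting this data under the $\Aut(G)$-action to obtain analogous data for every block in $\Bl_\mathcal R(G)$. The relativized definition only asks for subsets $\IBr_\mathcal R(G\mid Q)\subseteq \IBr_\mathcal R(G)$ partitioning $\IBr_\mathcal R(G)$, a global bijection with $\dz(\NNN_G(Q)/Q,\Bl_\mathcal R(G))$, and the extension requirement, all of which will be built block by block.

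First, for each $B\in\mathcal T$ fix the subsets $\IBr(B\mid Q)\subseteq\IBr(B)$ and the bijections
\[ \Omega_Q^{G,B}\colon \IBr(B\mid Q)\longrightarrow \dz(\NNN_G(Q)/Q,B) \]
provided by parts (i) and (ii) of Definition \ref{def4_2}, together with the groups $A(\phi)$ and extensions $\Ephi$, $\EOphi$ from (iii) for each $\phi\in\IBr(B\mid Q)$. For an arbitrary $B'\in\Bl_\mathcal R(G)$ choose $B\in\mathcal T$ and $a\in\Aut(G)$ with $B'=B^a$ and define
\[ \IBr(B'\mid Q):=\IBr(B\mid Q^{a^{-1}})^a, \qquad \Omega_Q^{G,B'}(\phi^a):=\Omega_{Q^{a^{-1}}}^{G,B}(\phi)^a \]
for $\phi\in\IBr(B\mid Q^{a^{-1}})$, and transport the extension data for $\phi$ to extension data for $\phi^a$ by conjugation with $a$. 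I would check independence of the choice of $a$: any two choices differ by an element of $\Aut(G)_B$, and the $\Aut(G)_B$-equivariance built into (i)(a), (ii), and (iii) of Definition \ref{def4_2} makes the transported objects coincide.

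Next, set $\IBr_\mathcal R(G\mid Q):=\bigsqcup_{B\in\Bl_\mathcal R(G)}\IBr(B\mid Q)$ and glue the bijections $\Omega_Q^{G,B}$ into a global bijection
\[ \Omega_Q^G\colon \IBr_\mathcal R(G\mid Q)\longrightarrow \dz(\NNN_G(Q)/Q,\Bl_\mathcal R(G)). \]
The disjoint union in (i)(b) combined with $\IBr_\mathcal R(G)=\bigsqcup_B \IBr(B)$ over $B\in\Bl_\mathcal R(G)$ yields the required partition $\IBr_\mathcal R(G)=\bigsqcup_{Q\in\Rad(G)/\sim_G}\IBr_\mathcal R(G\mid Q)$. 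The $\Aut(G)$-equivariance $\Omega_Q^G(\phi)^a=\Omega_{Q^a}^G(\phi^a)$ then holds for every $a\in\Aut(G)$ by construction, combining the intra-stabilizer equivariance from Definition \ref{def4_2}(ii) with the inter-orbit transport just defined. Part (iii) of the relative condition transfers from the corresponding statement in Definition \ref{def4_2}(iii) block by block, since the groups $A(\phi)$ and the pair $(\Ephi,\EOphi)$ together with the block identity $\bl(\Ephi_J)=\bl((\EOphi)_{\NNN_J(\o Q)})^J$ are preserved under conjugation by $a\in\Aut(G)$; the defect-zero clause (iv) similarly passes through pointwise.

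The main obstacle in the argument is the well-definedness of the transport, i.e. verifying that the data attached to the chosen representative $B\in\mathcal T$ really descends to data for every $B'$ in its $\Aut(G)$-orbit independently of the chosen $a$. This is exactly where the equivariance under the full stabilizer $\Aut(G)_B$ built into Definition \ref{def4_2} is used, so the verification is forced but must be done carefully for all three ingredients (subsets, bijections, and extensions). Once this is established, the global statements required by \cite[Definition 5.17]{Spaeth_red_BAW} follow directly from their pointwise counterparts.
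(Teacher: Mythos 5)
Your proof correctly handles the transport and gluing machinery that the paper treats as routine: assembling per-block data into global data via $\Aut(G)_B$-equivariance is sound and is indeed needed since the blockwise condition is only assumed for an $\Aut(G)$-transversal. But you have omitted the single requirement the paper's own proof identifies as the actual content of the argument: Definition 5.17 of \cite{Spaeth_red_BAW} requires the bijection $\Omega_Q^G$ to be compatible with central characters, i.e. for every $\nu\in\Irr(\Z(G))$ and $Q\in\mathcal R$ each character in $\Omega_Q^G\bigl(\IBr(B\mid Q)\cap\IBr(G\mid\nu^0)\bigr)$ must lift to a character in $\Irr(\NNN_G(Q)\mid\nu)$. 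This compatibility clause is not one of the explicit parts of the blockwise Definition \ref{def4_2}, so it cannot be obtained by mere block-by-block gluing; it has to be derived. The derivation is the short block-theoretic argument the paper gives (also appearing in the parenthetical note after Definition \ref{def4_2}): any lift $\phi'$ of a character in $\dz(\NNN_G(Q)/Q,B)$ satisfies $\bl(\phi')^G=B$, hence $\bl(\phi')$ and $B$ cover the same block of the central $p'$-group $\Z(G)$ and therefore lie over the same characters of $\Z(G)$. Your inventory of what Definition 5.17 requires (a partition, a global bijection, and the extension condition) omits this clause, and without verifying it your argument is incomplete.
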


\begin{proof} Note that following the explanation to Condition (iii) for every $Q\in \mathcal R$,
every character of 
$\Omega_Q^G\left (\IBr(B\mid Q)\cap \IBr(G\mid \nu^0)\right)$ 
lifts to a character in $\Irr(\NNN_G(Q) \mid \nu)$ 
for every $\nu\in \Irr(\Z(G))$, since the block $B$ and 
the block $\bl(\phi')$ covers the same block of the $p'$-group 
$\Z(G)$, whenever $\phi'$ is a lift of a character in 
$\dz(\NNN_G(Q)/Q,B)$. Apart from this requirement
both conditions coincide. 
\end{proof}

When verifying the above condition for any block,
Part (iii) of Definition \ref{def_BAWC} is crucial. 
The group $\Aphi$ required in this condition can be 
constructed for all characters $\phi\in\IBr(G)$. 

\begin{lem}\label{rem_exist_A}
Let $p$ be a prime, $S$ a finite non-abelian simple group, $G$ 
the universal $p'$-covering group of $S$, and $\phi\in\IBr(G)$.
Then there exists a finite group $A$ which satisfies the following: 
\enumroman
\begin{enumerate}
\item[(i)]  $\o G\lhd A$ with $\o G:=G/(\Z(G)\cap \ker\phi)$, 
$A/\Cent_{A}(\o G)\cong \Aut(G)_\phi$, $\Cent_A(\o G)=\Z(A)$, 
and $p\nmid |\Z(A)|$.
\item[(ii)] The character $\o\phi\in \IBr(\o G)$ associated with 
$\phi$ extends to $A$.
\end{enumerate}
\end{lem}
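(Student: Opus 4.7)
My plan is to construct $A$ as an appropriate central $p'$-extension of a natural group realizing the $\Aut(G)_\phi$-action on $\o G$, chosen so that $\o\phi$ extends.

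First I would set up the action. Since every $\alpha \in H := \Aut(G)_\phi$ stabilizes both $\Z(G)$ (characteristic in $G$) and $\ker\phi$, it stabilizes $Z := \Z(G) \cap \ker\phi$, hence acts on $\o G := G/Z$. Because $G$ is perfect (as the universal $p'$-covering group of the non-abelian simple group $S$), the induced map $\pi: H \to \Aut(\o G)$ is injective: an $\alpha$ trivial on $\o G$ sends $g \mapsto g z_g$ with $z_g \in Z \leq \Z(G)$, so $\alpha$ fixes every commutator and thus all of $G$. Moreover $\Z(G)$ is a $p'$-group by construction of $G$, so $\Z(\o G) = \Z(G)/Z$ is a $p'$-group.

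Next I would form an auxiliary group $X$ and then lift. Take $X$ to be the natural amalgamation of $\o G$ and $H$ over their common $\Inn(G) = \Inn(\o G)$ (a fibered construction, or equivalently a suitable quotient of $\o G \rtimes H$); this gives $\o G \lhd X$, $X/\Cent_X(\o G) \cong H$, and $\Cent_X(\o G) = \Z(X) \cong \Z(\o G)$, a $p'$-group. By Clifford theory, $\o\phi \in \IBr(\o G)$ extends to a projective Brauer representation of $X$ with factor set whose class lies in $H^2(X/\o G, k^\times)$. Since $k$ has characteristic $p$ and $k^\times$ contains only $p'$-torsion of finite order, this class has order coprime to $p$. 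Taking a finite central $p'$-extension $A \twoheadrightarrow X$ whose kernel realizes it, the projective extension lifts to a genuine $\w{\o\phi} \in \IBr(A)$ extending $\o\phi$. Then $A$ satisfies both (i) and (ii), with $\Z(A) = \Cent_A(\o G)$ a $p'$-group and $A/\Cent_A(\o G) \cong H$.

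The main obstacle will be carefully constructing $X$ so that $\Cent_X(\o G) = \Z(X)$ has $p'$-order while the quotient $X/\Cent_X(\o G)$ equals the full $\Aut(G)_\phi$; this requires rigorously identifying the $\Inn(G)$-overlap between $\o G$ and $H$ in the amalgamation. Once $X$ is in place, the central $p'$-lift to $A$ is routine because $H^2(X/\o G, k^\times)$ is a $p'$-group.
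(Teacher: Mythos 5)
Your high-level idea — exploit that any factor set arising here takes values in $k^\times$, hence has finite $p'$-order, to pass to a central $p'$-extension — is exactly the paper's idea. But the detour through an intermediate group $X$ with $\Cent_X(\o G)=\Z(X)\cong\Z(\o G)$ introduces a genuine gap that the paper sidesteps.

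The construction of $X$ is not valid as stated. $\Inn(G)\cong\Inn(\o G)$ is a \emph{quotient} of $\o G$ and a \emph{subgroup} of $H:=\Aut(G)_\phi$, so ``amalgamation over their common $\Inn(G)$'' is neither a pushout nor a pullback in the usual sense. And the natural ``quotient of $\o G\rtimes H$'' fails: the obvious normal subgroup to kill is $\Delta:=\{(g,\iota_g^{-1}):g\in\o G\}$ (the anti-diagonal copy of $\o G$), but $\Delta\cap(\o G\times 1)=\Z(\o G)\times 1$, so $\o G$ does \emph{not} embed in $(\o G\rtimes H)/\Delta$; in fact $(\o G\rtimes H)/\Delta\cong H$ itself. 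More to the point, the existence of an extension $1\to\o G\to X\to H/\Inn(G)\to 1$ realizing the given outer action and with $\Cent_X(\o G)=\Z(\o G)$ is a priori obstructed by a class in $H^3\bigl(H/\Inn(G),\Z(\o G)\bigr)$; you do not address this obstruction. (A secondary issue: even if $X$ exists, after the central extension $A\twoheadrightarrow X$ the group $\Cent_A(\o G)$ is the preimage of $\Z(X)$, which need not be central in $A$ without a further argument — e.g.\ Schur's lemma applied to the lifted representation.)

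The paper never forms $X$. It directly manufactures a projective $k$-representation $\mathcal P$ of $\Aut(G)_\phi$ extending the representation $\calD$ affording $\o\phi$ (using a section $\rep:\Inn(G)\to\o G$ and, on a transversal of $\Inn(G)$-cosets, extensions of $\calD$ to the cyclic-top groups $\o G\rtimes\langle t\rangle$). The factor set $\alpha$ of $\mathcal P$ takes values in a finite subgroup $C$ of $k^\times$ (so automatically $p\nmid|C|$), and $A$ is built as the central extension of $\Aut(G)_\phi$ by $C$ via $\alpha$. Because $\o G$ is perfect, $\Z(\o G)\le[\o G,\o G]$ and hence the scalar morphism $\nu:\Z(\o G)\to k^\times$ lands in $C$; this lets $\o G$ embed in $A$ via $\rep(g)z\mapsto(g,\nu(z))$, and $\Cent_A(\o G)=C$ is the scalar group, hence central by construction. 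This bypasses both the $H^3$-obstruction and the centrality issue. If you want to keep your two-step structure, you would need to either produce $X$ concretely (proving the obstruction vanishes here) or replace $X$ by $A$ itself from the start, which collapses your argument into the paper's.
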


\begin{proof}
In a first step we construct a projective $k$-representation 
$\mathcal P$ of $\Aut(G)_\phi$ and then determine 
a central extension of $\Aut(G)_\phi$ using the factor set of 
$\mathcal P$. Finally we prove that the thereby obtained finite 
group has the properties claimed in the statement.

Let $\o\phi\in\IBr(\o G)$  be the Brauer character, that lifts to 
$\phi$, and $\mathcal D$ a $k$-representation of $G$ associated with 
$\phi$. We denote by $\o \calD$ the $k$-representation of $\o G$ associated with $\o\phi$, such that $\mathcal D$ is the lift of $\o  {\mathcal D}$. 
Let $\TT$ be a full representative system of  
$\Inn(G)$-cosets in $\Aut(G)_\phi$. For $t \in\TT$ 
we define $\calP(t)$ by the following: 
Let $Y := \o G\rtimes \spann<t>$. Then there exists 
an extension $\w\calD$ of $\calD$ to $Y$. 
We set $\calP(t):=\w\calD(t)$. 
Further we choose an $\Z(\o G)$-section $\rep:\Inn(G)\rightarrow \o G$, i.e. a map $\rep: \Inn(G) \rightarrow \o G$ 
such that for $x\in\Inn(G)= G/\Z(G)$ the element $\rep(x)$ induces the automorphism $x$ on $G$ via conjugation and $\rep(1_{\Inn(G)})=1_G$. 
We obtain a projective $k$-representation of $\Inn(G)$ by
\[ \calP(i)=\calD(\rep(i)) \text{ for every } i \in \Inn(G).\]
Let $\mathcal P:\Aut(G)_\phi \longrightarrow \GL_{\o\phi(1)}(k)$ be given by 
\[ 
\calP(i t)=\calD(\rep(i))\calP(t) 
\text{ for every } i \in \Inn(G) \text{ and } t\in \TT.
\]
Straight-forward calculations show that $\mathcal P$ is 
a projective representation. Let $\alpha$ be its factor set and  
$C$ the subgroup of $k^\times$ that is generated by 
$\alpha(a,a')\in C$ for $a,a'\in\Aut(G)_\phi$. By the construction 
of $\calP$ the values of $\alpha$ are finite roots of unity and hence 
$C$ is a finite group. 

Like in the proof of Theorem (8.28) of \cite{Navarro} 
the factor set $\alpha$ defines a central extension 
$A$ of $\Aut(G)_\phi$: 
The elements of $A$ are the pairs $(a,c)$ 
with $a\in \Aut(G)_\phi$ and $c\in C$, and multiplied by
\[ (a_1,c_1)(a_2,c_2)=(a_1a_2,\alpha(a_1,a_2)c_1c_2) 
\text{ for every }a_i\in \Aut(G)_\phi \text{ and } c_i\in C.\]

Let $\nu:\Z(\o G)\longrightarrow C$ be the morphism 
such that for $z\in \Z(\o G)$ the matrix $\calD(z)$ is 
a scalar matrix to $\nu(z)$. 
Then  $\o G$ is isomorphic to the normal subgroup of $A$, 
via an isomorphism $\rep(g)z\mapsto (g,z)$. 
This proves the property in (i). 

We have $\Cent_A(\o G) = C$. Accordingly 
$A/\Cent_A(\o G)\cong \Aut(G)_\phi$ and $\Cent_A(\o G)=C$. 
As $C$ is a finite subgroup of the multiplicative group of $k$, 
$p\nmid |C|$.
We observe that $\mathcal P$ lifts to a representation $\calQ$ of $A$, 
that is defined by 
\[ \calQ(a,c)=c\calP(a) \text{ for every } a 
\in \Aut(G)_\phi \text{ and }c\in C.\]
By straight-forward calculations we see that the Brauer 
character of $A$ afforded by $\calQ$ is an extension of $\o \phi$. 
\end{proof}

\begin{lem}\label{lem4_2}
Let $N\lhd G$, $L\lhd G$ with $N\leq L$ and $\w\phi\in\IBr(L)$ 
with $\phi := \w\phi_N\in\IBr(N)$. Assume that $\w\phi$ is 
$G$-invariant and that for every prime $q\neq p$ there exists 
an extension $\psi_q$ of $\phi$ to some $K_q\leq G$ with 
$(\psi_q)_{K_q\cap L}=(\w\phi)_{K_q\cap L}$, where $K_q$ satisfies  
$N\leq K_q$ and $K_q/N\in \Syl_q(G/N)$. Then $\w\phi$ extends to $G$. 
\end{lem}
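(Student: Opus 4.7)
The plan is to first extend $\phi$ to $G$ via the Isaacs--Navarro extension criterion for Brauer characters, then use Gallagher's theorem to reduce the extension of $\w\phi$ to that of a linear character $\beta\in\IBr(L/N)$, and finally apply the extension criterion a second time to $\beta$.

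First, $\phi$ is $G$-invariant since $\w\phi$ is and $N\lhd G$. For each $q\neq p$ the hypothesis provides an extension $\psi_q\in\IBr(K_q)$ of $\phi$ with $K_q/N\in\Syl_q(G/N)$; for $q=p$, choose any $K_p\le G$ containing $N$ with $K_p/N\in\Syl_p(G/N)$ and observe that $\phi$ extends to $K_p$ because $K_p/N$ is a $p$-group and $H^2(K_p/N,k^\times)=0$ ($k^\times$ having no $p$-torsion). By the extension criterion, $\phi$ extends to some $\theta\in\IBr(G)$. Gallagher then yields $\w\phi=\theta_L\cdot\beta$ for a unique $\beta\in\IBr(L/N)$, which is linear (since $\w\phi(1)=\theta_L(1)=\phi(1)$) and $G/N$-invariant (by uniqueness and the $G$-invariance of both $\w\phi$ and $\theta_L$). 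Similarly, for each $q\neq p$ there is a linear $\alpha_q\in\IBr(K_q/N)$ with $\theta_{K_q}=\psi_q\cdot\alpha_q$. If $\gamma\in\IBr(G/N)$ extends $\beta$, then $\theta\gamma\in\IBr(G)$ extends $\w\phi$, so the task reduces to extending $\beta$ from $L/N$ to $G/N$.

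The crux is the identity $\beta_{M_q}=(\alpha_q^{-1})_{M_q}$ on $M_q:=K_q\cap L$, viewed as linear characters of $M_q/N$. The key observation is that $\theta_{M_q}$, $(\psi_q)_{M_q}$ and $\w\phi_{M_q}$ all have degree $\phi(1)$ and restrict to the irreducible character $\phi$ on $N$, so by Clifford theory applied to $N\lhd M_q$ each of them is itself an irreducible extension of $\phi$ to $M_q$. Combining the equalities $\w\phi_{M_q}=\theta_{M_q}\cdot\beta_{M_q}$, $(\psi_q)_{M_q}=\theta_{M_q}\cdot(\alpha_q^{-1})_{M_q}$ and the hypothesis $(\psi_q)_{M_q}=\w\phi_{M_q}$, the uniqueness in Gallagher's bijection for $M_q$ forces the claim.

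Finally, I apply the extension criterion to the $G/N$-invariant character $\beta$ on $L/N\lhd G/N$. Since $L/N\lhd G/N$, one has $M_q/N=(K_q/N)\cap(L/N)\in\Syl_q(L/N)$ and hence $K_qL/L\cong K_q/M_q\in\Syl_q(G/L)$. For $q\neq p$, I define $\w\beta\colon K_qL/N\to k^\times$ by $\w\beta(\bar l\bar k):=\beta(\bar l)\alpha_q^{-1}(\bar k)$ for $\bar l\in L/N$ and $\bar k\in K_q/N$; well-definedness on $M_q/N$ follows from the compatibility above, and multiplicativity uses the $K_q/N$-invariance of $\beta$ together with $L\lhd G$. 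For $q=p$ the extension of $\beta$ to $K_pL/N$ is again automatic since the quotient $K_pL/L$ is a $p$-group. Thus $\beta$ extends to some $\gamma\in\IBr(G/N)$, and $\theta\gamma$ is the desired extension of $\w\phi$ to $G$. The main obstacle is the compatibility step, which relies crucially on the irreducibility of the three restrictions to $M_q$, and this in turn is what makes Gallagher's uniqueness applicable.
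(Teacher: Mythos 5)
Your proof is correct, but it takes a genuinely different route from the paper's. The paper extends $\w\phi$ to each $LK_q$ directly at the level of $k$-representations: it chooses a representation $\calQ_q$ of $K_q$ affording $\psi_q$ that is normalized (via \cite[Theorem (8.16)]{Navarro}) to agree with the given representation $\calD$ of $L$ on $K_q\cap L$, then checks by a short matrix computation that $\calD_q(lk):=\calD(l)\calQ_q(k)$ is a well-defined representation of $LK_q$, and finally assembles the pieces with \cite[Theorem (8.29)]{Navarro}. You instead first push $\phi$ itself all the way up to $G$ (using the same extension criterion and the fact that $\phi$ automatically extends to $K_p$), invoke Gallagher's correspondence to factor $\w\phi=\theta_L\cdot\beta$ with $\beta\in\IBr(L/N)$ linear and $G$-invariant, and thereby reduce the problem to extending the linear Brauer character $\beta$ from $L/N$ to $G/N$; the local data needed for that are exactly the Gallagher factors $\alpha_q$ together with the compatibility $\beta_{M_q}=(\alpha_q^{-1})_{M_q}$, which you correctly extract from the hypothesis $(\psi_q)_{M_q}=\w\phi_{M_q}$ and Gallagher uniqueness over $M_q$. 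The two arguments share the same skeleton (reduce to Sylow pieces, check compatibility on $K_q\cap L$, glue via the extension criterion), but the paper stays at the representation level and needs only one invocation of the criterion, whereas your version isolates the obstruction as a linear-character extension problem at the cost of an extra pass through the criterion. Both are valid; the paper's is slightly more economical, yours is more structural.
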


\begin{proof} 
By Theorem (8.11) of \cite{Navarro} the character $\w\phi$ extends to 
$LK_p$, where $K_p\leq G$  satisfies $N\leq K_p$ and 
$K_p/N\in \Syl_p(G/N)$. According to  \cite[Theorem (8.29)]{Navarro}
it suffices to prove that $\w\phi$ extends to $LK_q$ for 
every prime $q\neq p$. Let $\calD$ be a $k$-representation of 
$L$ whose associated Brauer character is $\w\phi$.
We construct a $k$-representation of $LK_q$ extending 
$\calD$. According to Theorem (8.16) of \cite{Navarro}, for every prime 
$q\neq p$ there exists a $k$-representation $\calQ_q$ of $K_q$ 
associated to $\psi_q$ such that 
\[\calQ_q(x)=\calD(x) \text{ for every } x\in K_q\cap L.\]

Let $x\in K_q$. There exists a $k$-representation $\w\calD$ of 
$\calD$ to $\spann<L,x>$ that extends $\calD$. Both 
$(\calQ_q)_{\spann<L\cap K_q,x>}$ and $\w\calD_{\spann<L\cap K_q,x>}$ 
are extensions of $(\calQ_q)_{L\cap K_q}$. By Theorem (8.16) 
of \cite{Navarro} the matrices $\calQ_q(x)$ and $\w\calD(x)$ satisfy 
\[ \w\calD(x)=\zeta\calQ_q(x) \]
for some $\zeta\in k^\times$. Since $\calD(x)$ satisfies 
\[ \calD(l)^{\w\calD(x)}=\calD(l^x)\text{ for every } l\in L,\]
the matrix $\calQ_q(x)$ has the analogous property 
\[ \calD(l)^{\calQ_q(x)}=\calQ_q(l^x)\text{ for every } l\in L.\]
This implies that there exists a $k$-representation $\calD_q$ of 
$LK_q$ with 
\[ \calD_q(lk)=\calD(l)\calQ_q(k) \text{ for every } l
   \in L \text{ and } k\in K_q.\]
Accordingly $\w\phi$ can be extended to $LK_q$, and hence to $G$.
\end{proof}

We recall some properties of nilpotent blocks of quasisimple 
groups before we verify the inductive BAW condition for those blocks.

\begin{thm}[{An-Eaton {\cite[Theorem 1.1]{AnEaton1}} 
and {\cite[Theorem 1.1]{AnEaton2}}}]\label{thm_An_Eaton}
Let $S$ be a finite non-abelian simple group, 
$G$ a universal $p'$-covering group of $S$, 
and $B \in\Bl(G)$ a nilpotent block. 
Then $B$ has abelian defect groups. 
\end{thm}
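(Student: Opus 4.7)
The plan is to follow the An–Eaton strategy, which relies on the classification of finite simple groups together with a case-by-case verification in the alternating, sporadic, and Lie-type families. The central general input is the Brou\'e--Puig local characterisation of nilpotent blocks: $B$ is nilpotent if and only if for every $B$-Brauer pair $(Q,b_Q)$ the quotient $\NNN_G(Q,b_Q)/\Cent_G(Q)$ is a $p$-group. Combined with Puig's theorem identifying the source algebra of a nilpotent block with the group algebra of its defect group, this reduces the problem to local structural information about $G$ and allows one to read off the defect group from purely local data.

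For sporadic groups and their covers the argument is a finite check: blocks are enumerated from the Atlas and from the block-theoretic data available in GAP, and the nilpotent ones are inspected directly. For alternating groups and their double covers I would work via an explicit combinatorial analysis of $p$-blocks through $p$-cores and $p$-weights, together with the Morris--Olsson parameterisation for spin blocks of $2\cdot A_n$; in each case one sees that nilpotency severely constrains the inertial quotient of the block and forces its defect group to be abelian, typically cyclic of order $p$ for $p$ odd and visibly small for $p=2$.

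For a group $G$ of Lie type in characteristic $r$ I would separate the cases $p=r$ and $p\neq r$. In defining characteristic the blocks of $G$ are the principal block or of defect zero, so the statement is immediate. In non-defining characteristic I would apply the Bonnaf\'e--Rouquier Morita equivalence (and its extension to non-connected centralisers) to reduce a nilpotent block of $G$ to a quasi-isolated unipotent nilpotent block of $\Cent_{G^*}(s)$ for a suitable semisimple $p'$-element $s$, with defect groups preserved by the reduction. It then remains to show that nilpotent quasi-isolated unipotent blocks have abelian defect, which is controlled by the Cabanes--Enguehard classification of unipotent $e$-blocks and an explicit inspection of local structure.

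The principal obstacle is exactly this last step for exceptional types at bad primes, where the clean Cabanes--Enguehard parameterisation admits exceptions and each quasi-isolated unipotent block must be handled individually, computing its defect group via Deligne--Lusztig theory and detailed knowledge of centralisers of elementary abelian $p$-subgroups in the corresponding algebraic group. This case analysis is the technical heart of \cite{AnEaton1} and \cite{AnEaton2}, and is what one cannot avoid in any proof of the theorem.
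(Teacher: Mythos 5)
The paper states this as a quoted theorem, citing An--Eaton's Theorem 1.1 in both \cite{AnEaton1} and \cite{AnEaton2}, and supplies no proof of its own; there is therefore no in-paper argument to compare with. Your sketch is a faithful high-level account of the strategy of the cited works: CFSG reduction, the Brou\'e--Puig local characterisation of nilpotent blocks, direct inspection for sporadic covers, combinatorics of $p$-cores (and the Morris--Olsson spin-block parameterisation) for alternating covers, the defining-characteristic triviality, Bonnaf\'e--Rouquier reduction to quasi-isolated unipotent blocks in non-defining characteristic with Cabanes--Enguehard input, and the bad-prime exceptional-type cases as the technical core.
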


Furthermore from the work of K\"ulshammer-Puig \cite{KuelshammerPuig} 
we can deduce the following about the extensibility of characters 
in nilpotent blocks. 

\begin{lem}\label{prop3_7}
Let $N\lhd G$ and $b\in\Bl(N)$ a $G$-invariant nilpotent block 
with defect group $D$. 
Let $b'\in\Bl(\NNN_N(D))$ be the Brauer correspondent of $b$. 
Assume that $p\nmid |G/N|$. 
If $\phi\in\IBr(b)$ extends to $G$, 
i.e. there exists $\w\phi\in\IBr(G)$ with $\w\phi_N = \phi$,
then for every $\phi'\in\IBr(b')$ 
there exists $\w\phi'\in\IBr(\NNN_G(D))$ with 
$(\w\phi')_{\NNN_N(D)} = \phi'$ such that

\[ \bl\Big( (\w\phi')_{\NNN_J(D)}\Big)^J=\bl(\w\phi_J) 
\text{ for every } J \text{ with } N\leq J \leq G.\]
\end{lem}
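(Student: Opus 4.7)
The plan is to exploit the nilpotency of $b$ (and hence of its Brauer correspondent $b'$) to reduce the extension problem for $\phi'$ to the given extension of $\phi$, and then to verify the block equality $J$-by-$J$ via the functoriality of the K\"ulshammer-Puig source algebra correspondence. Since $b$ is nilpotent, $\IBr(b)=\{\phi\}$; a standard $b$-subpair argument shows that $b'$ is again nilpotent (with the same defect group $D$), hence also $\IBr(b')=\{\phi'\}$. In particular, $\phi$ is automatically $G$-invariant and $\phi'$ is $\NNN_G(D)$-invariant. Set $B:=\bl(\w\phi)\in\Bl(G)$; since $p\nmid|G/N|$, the group $D$ is also a defect group of $B$, so $B$ is itself a nilpotent block covering $b$. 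Its Brauer correspondent $B'\in\Bl(\NNN_G(D))$ then covers $b'$ because $\NNN_G(D)\cap N=\NNN_N(D)$.

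Next I would invoke the structure theory of K\"ulshammer-Puig \cite{KuelshammerPuig} for $G$-invariant nilpotent blocks, applied in parallel to the pairs $(N\lhd G,\,b)$ and $(\NNN_N(D)\lhd\NNN_G(D),\,b')$. The fixed source algebra isomorphism $\mathcal O D\cong ibi$ becomes $G$-equivariantly compatible with the corresponding source algebra isomorphism for $b'$, which yields a canonical bijection between Brauer characters of blocks of $G$ covering $b$ and Brauer characters of blocks of $\NNN_G(D)$ covering $b'$. Defining $\w\phi'\in\IBr(B')$ to be the image of $\w\phi$ under this bijection gives $(\w\phi')_{\NNN_N(D)}=\phi'$ by construction.

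For the block identity at each intermediate $J$ with $N\leq J\leq G$: the restriction $\w\phi_J$ remains irreducible, so $\bl(\w\phi_J)$ is a single nilpotent block of $J$ covering $b$ with defect group $D$; its Brauer correspondent in $\NNN_J(D)=\NNN_G(D)\cap J$ covers $b'$ and, by the functoriality of the K\"ulshammer-Puig embedding in the ambient group, must contain $(\w\phi')_{\NNN_J(D)}$. This yields the desired equality $\bl((\w\phi')_{\NNN_J(D)})^{J}=\bl(\w\phi_J)$. I expect the main obstacle to be precisely this last compatibility across all intermediate $J$ simultaneously: an arbitrary extension of $\phi'$ to $\NNN_G(D)$ need not restrict to the correct blocks at every $J$, so the argument must go through the source-algebra-canonical choice of $\w\phi'$ supplied by K\"ulshammer-Puig rather than any character-theoretic extension-and-adjust procedure.
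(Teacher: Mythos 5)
Your overall strategy is right: use K\"ulshammer--Puig to produce $\w\phi'$ from $\w\phi$, and then prove block compatibility for all intermediate $J$. That is indeed the skeleton of the paper's argument. However, there are a few genuine gaps (and one error) that the proposal does not resolve.

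First, the claim that ``$B$ is itself a nilpotent block covering $b$'' (and, later, that $\bl(\w\phi_J)$ is nilpotent for every $J$) is false. A block of $G$ covering a nilpotent block of $N$ with the same defect group need not be nilpotent; the inertial quotient of $B$ can pick up nontrivial $p'$-automorphisms of $D$ coming from $G/N$. What \emph{is} nilpotent is the block $\mathfrak B := \bl(\w\phi_{G[b]})$ of the intermediate group $G[b]$, and this is established in the paper via \cite[Lemma 3.4]{KoshitaniSpaeth1}, which gives $\mathfrak B\!\downarrow_{N\times N}\cong b$. Passing through $G[b]$ (and the analogous $\NNN_G(D)[b']$) is essential, and your proposal skips it.

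Second, ``$(\w\phi')_{\NNN_N(D)}=\phi'$ by construction'' is not automatic from the Morita equivalence $B\cong\Mat_n(\mathcal A)$, $B'\cong\Mat_{n'}(\mathcal A)$ of \cite[1.20.3]{KuelshammerPuig}. A Morita equivalence identifies module categories; it does not, by itself, say how restriction to $\NNN_N(D)$ behaves. The paper verifies this by a degree bookkeeping argument: using Puig's result $\mathfrak B\cong\Mat_{\phi(1)}(\mathcal O D)$, the identities $1_B=1_{\mathfrak B}$, $1_{B'}=1_{\mathfrak B'}$, and the index equality $|G:G[b]|=|H:H[b']|$ (from the Frattini argument and $H[b']=L=G[b]\cap H$), one computes $\rank B/\rank B'$ and concludes $\Pi(\w\phi)(1)=\phi'(1)$. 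Only then, using that $B'$ covers $\mathfrak B'$ and $\IBr(\mathfrak B')$ is a singleton, can one infer that $\w\phi'$ restricts correctly. This step needs to be spelled out.

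Third, the ``functoriality of the K\"ulshammer--Puig embedding'' over all intermediate $J$ is asserted but not proved, and I do not see how to make it precise directly. The paper instead establishes the single identity $\bl(\w\phi_{G[b]})=\bl(\w\phi'_L)^{G[b]}$ and then propagates it: using $p\nmid|G:N|$ and \cite[Lemma 2.4]{KoshitaniSpaeth1} one gets elementwise compatibility $\bl(\w\phi_{\spann<N,x>})=\bl(\w\phi'_{\spann<M,x>})^{\spann<N,x>}$ for $x\in L^0$; then Dade/Murai's block-induction results extend this to all $x\in H^0$; finally \cite[Lemma 2.5(a)]{KoshitaniSpaeth1} upgrades the elementwise statement to the statement for all subgroups $J$ with $N\leq J\leq G$. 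Your proposal correctly flags this compatibility as the main obstacle, but a proof is required, not just the expectation that the source-algebra picture supplies it.
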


\begin{proof}
Set $H:= \NNN_G(D)$, $L:= \NNN_{G[b]}(D)$ and $M := \NNN_N(D)$.
Further, set $B := \bl(\w\phi)\in\Bl(G\mid b)$, 
$\mathfrak B := \bl(\w\phi_{G[b]})\in\Bl(G[b]\mid b)$,
$b' := \bl(\phi')\in\Bl(M)$. 
Let $B'\in\Bl(H)$ and $\mathfrak B'\in\Bl(L)$ 
be the Brauer correspondents of $B$ and $\mathfrak B$, respectively.
In the following we view blocks as bimodules.
 
Now, since $p\nmid |G[b]/N|$, it follows from
\cite[Lemma 3.4]{KoshitaniSpaeth1}
that ${(\mathfrak B){\downarrow}}{_{N\times N}} \, \cong b$ as 
$k[N\times N]$-bimodules  
and hence $\mathfrak{B}$ is nilpotent.
Then, by the definition of nilpotent blocks,
$\mathfrak B'$ and $b'$ are both nilpotent blocks
since $\mathfrak B$ and $b$ are nilpotent. 

Clearly $B\in\Bl(G\mid b)\cap\Bl(G\mid\mathfrak B)$
and $\mathfrak B\in\Bl(G[b]\mid b)$.
Hence, by \cite[Theorem 3.5]{DadeBlockExtensions} or \cite[Theorem 3.5(i)]{Murai_Dade}, $B$ is the unique block
of $G$ covering $\mathfrak B$.
In the following we denote by $1_C$ the block idempotent of the block $C$ over $k$. 
Moreover, \cite[Theorem 3.5]{DadeBlockExtensions} (or  \cite[Corollary 4]{Kuelshammer})
implies that $1_B = 1_{\mathfrak B}$.
Then, by the theorem of Harris-Kn{\"o}rr  \cite{HarrisKnoerr} (or \cite[Theorem (9.28)]{Navarro}), 
there exists a unique block
$B'\in\Bl(H\mid b')$ with $(B')^G = B$.
Analogously we see  $\mathfrak B' \in\Bl(L\mid b')$.
Note that the blocks $b$, $\mathfrak B$, $B$, $b'$, $\mathfrak B'$
and $B'$ have $D$ as defect group since $p\nmid |G/N|$,
see \cite[Theorem (9.26)]{Navarro}.

Note that according to \cite[Corollary 12.6]{DadeBlockExtensions} (or \cite[Theorem 3.13]{Murai_Dade})
we see that  $L = H[b']$. According to 
\cite[Theorem 3.5]{DadeBlockExtensions} (or   \cite[Theorem 3.5]{Murai_Dade}), $B'$ 
is the unique block of $H$ covering $\mathfrak B'$ and
hence the associated idempotents satisfy 
$1_{B'} = 1_{\mathfrak B'}$, just as above.
Since $\mathfrak B'$ is nilpotent,
$\IBr(\mathfrak B') := \{\psi\}$ for some $\psi\in\IBr(L)$.

Now, since ${\mathfrak B}{\downarrow}{_{N\times N}} \, \cong b$,
it follows from \cite[Theorem 8]{Kuelshammer} and
\cite[Theorem 4.1]{HidaKoshitani} that
${\mathfrak B'}{\downarrow}{_{M\times M}} \, \cong b'$.
Hence, by \cite[Theorem 4.1]{HidaKoshitani}, the map
$\IBr(\mathfrak B')\rightarrow\IBr(b')$ given by
$\theta\mapsto\theta_M$ is a bijection. This yields that
$ \psi_M = \phi'$.      

Since $\mathfrak B$ and $\mathfrak B'$ are nilpotent,
it holds from \cite[1.20.3]{KuelshammerPuig} that 
there exist an $\mathcal O$-algebra $\mathcal A$ of finite rank
and positive integers $n$ and $n'$ such that
\[ 
   B\cong\Mat_n(\mathcal A) \quad \text{and} \quad
   B'\cong\Mat_{n'}(\mathcal A) \quad
  \text{as }\mathcal O\text{-algebras}.
\]
Thus, the Morita equivalence between $B$ and $B'$ induces a bijection
$\Pi: \IBr(B)\rightarrow\IBr(B')$ such that
$(\Pi(\theta))(1) = \frac{n'}{n}\theta(1)$ for each $\theta\in\IBr(B)$.

Now, since $\IBr(\mathfrak B) = \{\w\phi_{G[b]}\}$,
$\w\phi(1) = \phi(1)$ and $D$ is a defect group of $\mathfrak B$,
it holds by a result of Puig \cite[(1.4.1)]{Puig1988} that
$\mathfrak B \cong \Mat_{\phi(1)}(\mathcal OD)$ 
as $\mathcal O$-algebras, and hence 
$\rank\mathfrak B = |D|\phi(1)^2$.
On the other hand, since $1_B = 1_{\mathfrak B}$, we have
$\rank B = |G:G[b]| \,\, \rank {\mathfrak B}$.
Similarly, since $\psi(1) = \phi'(1)$, it follows that
\[
\rank \mathfrak B' = |D|\phi'(1)^2
\quad \text{and} \quad 
\rank B' = |H:H[b']| \, \rank {\mathfrak B}.
\]
Note that $H[b'] = L = G[b]\cap H$, and 
$G = G_b = NH = G[b]H$ by an Frattini argument, hence
$|G:G[b]| = |H:H[b']|$. Thus,
\[ (\rank B)/n^2 = \rank \mathcal A
  = (\rank B')/(n')^2.
\]
Accordingly
\[
\Big(\frac{n'}{n}\Big)^2 = \frac{\rank B'}{\rank B}
= \frac{\rank\mathfrak B'{\cdot}|H:H[b']|}
       {\rank\mathfrak B{\cdot}|G:G[b]|}
= \frac{\rank\mathfrak B'}{\rank\mathfrak B}
= \frac{|D|\phi'(1)^2}{|D|\phi(1)^2}
= \Big(\frac{\phi'(1)}{\phi(1)}\Big)^2.
\]
This yields that $\psi(1) = \phi'(1) = \frac{n'}{n}\phi(1)$.
Let $\w\phi':=\Pi(\w\phi)\in \IBr(B')$. Since $\w\phi'(1) = \psi(1)$,
$B'$ covers $\mathfrak B'$ and 
$\IBr(\mathfrak B') = \{\psi\}$, we know that
$\w\phi'_L = \psi$ and hence $\w\phi'_M = \phi'$.
Thus $\phi'$ has an extension belonging to $B'$.

Recall that $\w\phi'$ was constructed such that
\[ \bl(\w\phi_{G[b]})= \bl(\psi)^{G[b]}=\bl(\w\phi'_{L})^{G[b]}.\]
Since $p\nmid |G:N|$ we can apply Lemma 2.4 of \cite{KoshitaniSpaeth1} 
and obtain that $\w\phi$ and $\w \phi'$ satisfy 
\[ \bl(\w\phi_{\spann<N,x>})= \bl(\w\phi'_{\spann<M,x>})^{{\spann<N,x>}}
\text{ for every } x\in L^0 .\]
Note that this implies for every $ x\in H^0$ using Theorem 3.5 of 
\cite{Murai_Dade} the following equalities 
\begin{align*}
 \bl(\w\phi_{\spann<N,x>})&=
\bl(\w\phi_{\spann<N,x>\cap G[b]})^{\spann<N,x>}= \\
&=
\bl(\w\phi'_{\spann<M,x>\cap L})^{{\spann<N,x>}}=
(\bl(\w\phi'_{\spann<M,x>\cap L})^{{\spann<M,x>}})^{{\spann<N,x>}}=
\bl(\w\phi'_{\spann<M,x>})^{{\spann<N,x>}}.\end{align*}
According to Lemma 2.5(a) of \cite{KoshitaniSpaeth1} this proves
\begin{align*}
 \bl(\w\phi_{J})&=
\bl(\w\phi'_{\NNN_J(D)})^{J}
\text{ for every } J \text{ with } N\leq J \leq G. \qedhere
\end{align*}
\end{proof}

We apply this to verify that the inductive BAW condition holds for nilpotent blocks of quasisimple groups. 
\renewcommand{\proofname}{Proof of Theorem \ref{thm1_3}}

\begin{proof}
Without loss of generality we may assume that the block $B$ does not
have central defect.
According to Theorem \ref{thm_An_Eaton} a defect group $D$ of $B$ 
is abelian. Hence for any $Q\in\Rad(G)$ the set $\dz(\NNN_G(Q)/Q,B)$ 
is non-empty if and only if $Q$ is a defect group of $B$, 
see proof of Consequence 2 in \cite{Alperin87}. 
Hence Definition \ref{def4_2}(i) is trivial.

Let $D$ be the defect group of $B$, and
let $B'\in\Bl(\NNN_G(D))$ be the Brauer correspondent of $B$.
Then, by the definition of nilpotent blocks,
$B'$ is also nilpotent.

The characters of $\dz(\NNN_G(D)/D,B)$ lift to characters in $\Irr(B')$. 
By the theory of nilpotent blocks
due to Brou{\'e} and Puig \cite{BrouePuigFrobenius},
see \cite[Theorem (52.8)]{Thevenaz}, 
there is exactly one character in $\Irr(B')$ that contains 
$D$ in its kernel. We identify $\dz(\NNN_G(D)/D,B)$ with 
$\IBr(B')$, since $\dz(\NNN_G(D)/D,B)$ correspond to characters 
in $\IBr(B')$. 

There exists an $\Aut(G)_{B}$-equivariant bijection 
$\Omega_D^G:\IBr(B)\longrightarrow \dz(\NNN_G(D)/D,B)$ 
since both sets contain exactly one character.
Obviously Definition \ref{def4_2}(iv) does not apply since $D\neq 1$.

Hence it suffices to check Condition \ref{def4_2}(iii)
Let $\phi\in \IBr(B)$. 
Set $Z := \Z(G) \cap \ker\phi$ and $\o G := G/Z$.
Note that $p\nmid |Z|$.
Clearly we can consider $\phi\in\IBr(\o G)$.
Let $\theta\in \IBr(\o G)$ be associated with $\phi$.
Hence there is a block $\o B\in\Bl(\o G)$ with $\bl(\theta) = \o B$.
By \cite[Theorem 5.8.8]{NagaoTsushima}, $B$ is the 
block of $G$ dominating $\o B$ and $\o D := DZ/Z \cong D$
is a defect group of $\o B$.
Hence, by the definition of nilpotent blocks,
we know that $\o B$ is also nilpotent, and hence
$\IBr(\o B) = \{\theta\}$.
Then, Lemma \ref{rem_exist_A} yields that there is a finite group $A$
such that $\o G \lhd A$,
$\Z(A) = \Cent_A(\o G)$, $A/Z(A)  \cong \Aut(G)_{\phi}$, $p \nmid |\Z(A)|$,
and that $\theta$ extends to $A$, namely there is
$\w{\theta}\in\IBr(A)$ with $\w{\theta}_{\o G} = \theta$.
Thus, Definition \ref{def4_2}(iii)(a)-(b) are satisfied.

Let $\theta'\in\IBr(\NNN_{\o G}(\o D))$ be the character associated with $\Omega_D^G(\phi)^0$. 
(This is well-defined since the lift of $\Omega_D^G(\phi)^0$ to $\NNN_G(D)$ 
covers the principal block of $Z$. Further by group theory $\NNN_{\o G}(\o D)= \NNN_G(D)/Z$ since $Z$ is a normal $p'$-subgroup.)
According to \cite[Theorem C(c)(2)]{KoshitaniSpaeth1}, 
$\theta'$ extends to $\NNN_{A[\o B]}(\o D)$ 
and some extension $\w{\theta'}\in\IBr(\NNN_{A[\o B]}(\o D))$ satisfies 
\[ \bl(\w{\theta'}_{\NNN_J(\o D)})^J=\bl(\w{\theta}_J) \text{ for every } 
\o G\leq J \leq A[\o B].\]

We check that we can apply Lemma \ref{lem4_2}. 
For every prime $q$ we denote by $A_q$ a group with 
$\o G\leq A_q\leq A$ and $A_q/\o G \in\Syl_q(A/\o G)$. 
For $p\neq q$ there exists an extension ${\psi}_q$ of $\theta'$ to 
$\NNN_{A_q}(\o D)$ according to Lemma \ref{prop3_7} such that 
\[ \bl((\psi_{q})_{\NNN_J(D)})^J=\bl(\ 
\theta_J)^J \text{ for every }\o G\leq J\leq A_q.\]
Hence the character 
$\psi_q$ satisfies 
$(\psi_q)_{\NNN_{A_q}(\o D)}= (\w{\theta'})_{\NNN_{A_q}(\o D)}$. 
Hence $\w{\theta'}$ extends to $\NNN_A(\o D)$ by Lemma \ref{lem4_2}. 
Accordingly Part (iii) of \ref{def_BAWC} is satisfied for $\phi$. 
\end{proof}
\renewcommand{\proofname}{Proof}

\section{Blocks with cyclic defect groups - Recall}\label{sec5}
In this section we recall some known results about 
blocks with cyclic defect groups that are relevant 
for our later considerations. 
Based on the work of Dade \cite{Dade66,Dade_cyclic}, 
blocks having cyclic defect groups seem well-understood. 
For Brauer characters Green correspondence gives a natural 
bijection with many additional properties.

We use for the Green correspondence the notation as introduced in 
\cite[Section 4.4]{NagaoTsushima}. 

Although Dade gave the following statement already in \cite{Dade_cyclic},  we recall for the sake of completeness its proof since its details are used later.

\begin{lem}\label{prop_bij_IBr}
Let $G$ be a finite group and $B\in\Bl(G)$ with a cyclic defect 
group $D$. Let $B'\in\Bl(\NNN_G(D))$ be the Brauer correspondent of $B$. 
Then there exists an $\Aut(G)_{B,D}$-equivariant bijection
\[ \Pi: \IBr(B) \longrightarrow \IBr(B'),\]
such that for $\phi \in \IBr(B)$ and a simple $kG$-module 
$V$ affording $\phi$, the character $\Pi (\phi)$ is the 
irreducible Brauer character of $\NNN_G(D)$ afforded by 
the head of $\f(V)$, where $\f = \f_{(G,D,\NNN_G(D))}$ is 
the Green correspondence with respect to $(G,D,\NNN_G(D))$.
\end{lem}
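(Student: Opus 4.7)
The plan is to combine the Green correspondence with Dade's classical description of the module category of a block with cyclic defect group. The case $D=1$ is trivial: then $B$ has defect zero, $B=B'$ has a unique irreducible Brauer character, and $\Pi$ is forced to be the identity. Henceforth assume $D\neq 1$.

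I would first check that $\f=\f_{(G,D,\NNN_G(D))}$ is defined on every simple $kG$-module $V$ in $B$. By Dade's structure theorem for blocks with cyclic defect groups (\cite{Dade_cyclic}, cf.\ \cite[Chapter 5]{NagaoTsushima}), every simple module in $B$ has vertex exactly $D$. By the standard compatibility of Green correspondence with Brauer correspondence, $\f(V)$ then lies in $B'$; and since $\f(V)$ is an indecomposable $k\NNN_G(D)$-module with vertex $D$ in the block $B'$ (which itself has cyclic defect $D$), the Brauer tree description in Dade's theorem tells us that $\f(V)$ is uniserial. In particular $\f(V)$ has a simple head, and $\Pi(\phi)$ is a well-defined element of $\IBr(B')$.

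For bijectivity I would appeal directly to the common Brauer tree of $B$ and $B'$. The simples of $B$ and the simples of $B'$ are both canonically indexed by the edges of this tree, and under Dade's description the edge attached to $V$ coincides with the edge attached to the simple head of $\f(V)$. Equivalently, the inverse Green correspondence $g=g_{(G,D,\NNN_G(D))}$ sends each simple $W$ in $B'$ to a uniserial indecomposable $g(W)$ in $B$ whose simple head affords the character $\Pi^{-1}(\operatorname{Brauer character of }W)$, so $W\mapsto \operatorname{head}(g(W))$ inverts $\Pi$. Since $|\IBr(B)|=|\IBr(B')|$ equals the inertial index, it would also suffice to verify injectivity of $\Pi$, which again follows from the Brauer tree analysis.

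Finally, the $\Aut(G)_{B,D}$-equivariance is essentially formal: any $\sigma\in\Aut(G)_{B,D}$ stabilises $D$ and hence $\NNN_G(D)$, so twisting by $\sigma$ commutes with restriction, with the extraction of the indecomposable summand of prescribed vertex (i.e.\ with $\f$), and with taking heads. This gives $\Pi(\phi^\sigma)=\Pi(\phi)^\sigma$ for every $\phi\in\IBr(B)$ and $\sigma\in\Aut(G)_{B,D}$. The main obstacle is the Dade structural input, namely that $\f(V)$ is uniserial and that the head map $V\mapsto\operatorname{head}(\f(V))$ realises the edge-preserving bijection between the Brauer trees of $B$ and $B'$; modulo this classical fact, everything else is standard Green-correspondence bookkeeping.
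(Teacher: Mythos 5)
Your proposal follows essentially the same route as the paper's proof: every simple module in $B$ has vertex $D$, Green correspondence is compatible with Brauer correspondence so that $\f(V)$ lies in $B'$, indecomposable modules in $B'$ are uniserial so that $\operatorname{head}(\f(V))$ is simple, and bijectivity and $\Aut(G)_{B,D}$-equivariance come from the Brauer tree theory and the equivariance of the Green correspondence, which are exactly the ingredients the paper cites (Dade, Kn\"orr, Nagao--Tsushima, and Chapter V / Section 19 of Alperin's book). The one clause I would not let stand as written is the claim that $W\mapsto\operatorname{head}(g(W))$ literally inverts $\Pi$: for $W=\operatorname{head}(\f(V))$ the module $g(W)$ is the Green correspondent of $W$, not of $\f(V)$, so $g(W)\neq V$ in general and it is not immediate that $\operatorname{head}(g(W))=V$; the bijectivity you want is better attributed directly to the edge-indexing of simples via the common Brauer tree (equivalently Dade's Lemma 4.7), which is what the paper does.
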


\begin{proof} The existence of a bijection can be deduced 
from Lemma 4.7 of \cite{Dade_cyclic} together with 
Theorem (9.9) of \cite{Navarro}. 
 Since any simple $kG$-module in $B$ has $D$ as its vertex, 
see  \cite{Dade66} or Corollary 3.7 in \cite{Knoerr}, 
we can define $\f(V)$. Then it is known that $\f(V)$ belongs 
to $B'$, where $B' \in \Bl(\NNN_G(D))$ is the Brauer correspondent 
of $B$, see Corollary 5.3.11 in \cite{NagaoTsushima}. Since any 
indecomposable $k\NNN_G(D)$-module in $B'$ is uniserial according 
to Section 19 in \cite{Alperinbook}, the head of $\f(V)$ is a 
simple $k\NNN_G(D)$-module in $B'$. Thus $\Pi$ is a well-defined 
map and bijective, see Chapter V in \cite{Alperinbook}. 
The bijection $\Pi$ is $\Aut(G)_{B,D}$-equivariant, 
since the 
Green correspondence has the analogous equivariance property.
\end{proof}

The theory of Dade from \cite{Dade_cyclic} and the Green 
correspondence provide several tools in this situation. 
In order to explore the bijection in Lemma \ref{prop_bij_IBr} 
we recall some facts about Green correspondence. For finite 
groups $H\leq G$ and an $H$-module $V$ we denote by $V^G$ the 
induced module, and for a $G$-module $W$ we denote by $W_H$ 
the restriction of $W$ to an $H$-module.

\begin{lem}\label{p'IndexGreen} 
Let $N\lhd G$ with $p \nmid |G : N|$. Further let $V$ be an 
indecomposable $kN$-module with vertex $D$ and suppose that 
there is a $kG$-module $\w V$ such that $\w V_N \cong V$ as 
$kN$-modules. For $H:= \NNN_G(D)$ and $M:= \NNN_N(D)$ the 
following holds:
\enumalph
\begin{enumerate}
    \item $D$ is a vertex of $\w V$.
    \item Let $\w\f$ and $\f$ be the Green correspondences 
with respect to $(G, D, H)$ and $(N, D, M)$, respectively. 
Then, 
$(\w\f \w V)_M$ is the direct sum of $\f V$ and indecomposable 
modules that do not have $D$ as their vertices. 
     \item $\w\f \w V$ is a direct summand of $(\f V)^H$.
\end{enumerate} 
\end{lem}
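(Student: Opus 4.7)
The plan is to use two consequences of the hypothesis $p\nmid |G:N|$: every $p$-subgroup of $G$ lies in $N$, and every indecomposable $kG$-module $W$ is a direct summand of $(W_N)^G$ (Gasch\"utz's theorem). The same applies to the extension $M\lhd H$, since $H/M\cong HN/N\leq G/N$ also has $p'$-order, so all ``passage to normalizers'' takes place along $p'$-indices. Starting from (a): a vertex $Q$ of $\w V$ is a $p$-subgroup of $G$, hence lies in $N$. A source $S$ of $\w V$ at $Q$ gives $V=\w V_N\mid (S^G)_N$, and Mackey's formula forces some $G$-conjugate of $D$ (the vertex of $V$) to lie in $Q$. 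Conversely, a source $T$ of $V$ at $D$ gives $V\mid T^N$ and hence $V^G\mid T^G$; since $\w V\mid (\w V_N)^G=V^G$ by Gasch\"utz, $\w V\mid T^G$, so $Q$ lies in a $G$-conjugate of $D$. Order comparison now yields that $Q$ and $D$ are $G$-conjugate.

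For (b), the idea is to compute $\w V_M$ along two routes. Green correspondence gives
\[ \w V_H=\w\f\w V\oplus X \quad\text{and}\quad V_M=\f V\oplus Y,\]
where every indecomposable summand of $X$ and $Y$ has vertex properly contained in $D$. Transitivity of restriction gives $(\w V_H)_M=(\w V_N)_M=V_M$, so
\[ (\w\f\w V)_M\oplus X_M\;\cong\;\f V\oplus Y.\]
Since $H=\NNN_G(D)$ normalizes $D$, Mackey applied to each summand of $X$ shows every indecomposable summand of $X_M$ still has vertex strictly smaller than $D$. Krull--Schmidt therefore forces $\f V$ (of vertex $D$) to appear in $(\w\f\w V)_M$ with multiplicity exactly one, while the complementary summand $Z$ consists only of indecomposables whose vertex is properly contained in $D$.

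For (c), applying the same Gasch\"utz argument to the $p'$-extension $M\lhd H$ yields $\w\f\w V\mid ((\w\f\w V)_M)^H$. Substituting (b) gives $((\w\f\w V)_M)^H=(\f V)^H\oplus Z^H$, and every indecomposable summand of $Z^H$ inherits a vertex properly contained in $D$, whereas $\w\f\w V$ has vertex exactly $D$. Krull--Schmidt then forces $\w\f\w V\mid (\f V)^H$. The main subtlety throughout is the vertex bookkeeping under Mackey restriction and induction; the hypothesis $p\nmid |G:N|$, together with the fact that $H$ and $M$ normalize $D$, is exactly what ensures that the various ``error'' summands produced along the way have strictly smaller vertex than $D$.
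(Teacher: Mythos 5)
Your proof is correct, and for parts (a) and (b) it runs essentially parallel to the paper's argument: where the paper cites \cite[Lemma~4.3.4(i),(ii) and Theorem~4.2.5]{NagaoTsushima}, you re-derive the same vertex inequalities directly from sources and Mackey decomposition, and the Krull--Schmidt comparison of the two restrictions of $\w V$ to $M$ is exactly the paper's strategy for (b).

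Two remarks. First, a small imprecision in (b): you assert that every indecomposable summand of $X$ (the non-correspondent part of $\w V_H$) and of $Y$ has vertex \emph{properly contained} in $D$. That is stronger than the Green correspondence actually gives. Those vertices lie in $\mathfrak Y(G,D,H)$, resp.\ $\mathfrak Y(N,D,M)$, i.e.\ are $H$- (resp.\ $M$-)subconjugate to some $D^g\cap H$ with $g\notin H$, and such a group may well have order $|D|$ (this happens exactly when $D^g\leq \NNN_G(D)$ although $g\notin \NNN_G(D)$, which certainly occurs). What the Green correspondence guarantees, and what the Krull--Schmidt argument actually needs, is only that these vertices are not $H$- (resp.\ $M$-)conjugate to $D$; the further step that no summand of $X_M$ has vertex $D$ then uses $D\lhd M$ and the order comparison, as in the paper. (Your claim \emph{is} correct for the complement $Z$ of $\f V$ inside $(\w\f\w V)_M$, since every vertex occurring there is genuinely a subgroup of $D$ because $H$ and $M$ normalize $D$.) So the wording should be weakened to ``does not have $D$ as a vertex,'' which is also exactly how the lemma is stated.

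Second, part (c) is a genuinely different route. The paper obtains (c) in one line from Burry's theorem \cite[Theorem~4.4.8(ii)]{NagaoTsushima}, applied to $\w V\mid V^G$ established in (a). You instead invoke relative $M$-projectivity of $\w\f\w V$ (valid since $p\nmid|H:M|$, as $H/M\cong HN/N\leq G/N$), giving $\w\f\w V\mid\big((\w\f\w V)_M\big)^H$, then substitute (b) and separate summands of $(\f V)^H$ from those of $Z^H$ by vertex. Both are correct; Burry's theorem gives the shorter argument, while your version is self-contained given (a) and (b), and makes fully explicit that the $p'$-index hypothesis is what drives the whole lemma.
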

\begin{proof}
Clearly $\w V$ is indecomposable as a $kG$-module. Secondly, 
$D \subseteq_G {\vx}(\tilde V)$
by Lemma 4.3.4(ii) in \cite{NagaoTsushima}.
Since $\w V$ is relatively $N$-projective by
Theorem 4.2.5 in \cite{NagaoTsushima}, we know that
$\w V \mid ({\w V}_N)^G =  V^G$.
Thus, \cite[Lemma 4.3.4(i)]{NagaoTsushima} implies that
$D$ is a vertex of $\w V$.

Because of (a) we can apply Green correspondence and have 
\[{\w V}_M = ({\w V}_H)_M =\Big( \w\f \w V 
\oplus (\oplus_i \w Y_i) \Big)_M = (\w\f \w V)_M 
\oplus \Big( \oplus_i ((\w Y_i)_M) \Big),\]
where each $\w Y_i$ has a vertex which is in the 
$\w {\mathfrak Y}  := {\mathfrak Y} (G, D, H)$, 
that is defined as in Section 4.4.1 of \cite{NagaoTsushima}. 
We easily get by Lemma 4.3.4(ii) in \cite{NagaoTsushima} 
and the definition of $\mathfrak Y$ that any indecomposable 
direct summand of each $(\w Y_i)_M$ can not have $D$ as a vertex. 
Thus it follows that
\[
{\w V}_M = ({\w V}_H)_M = 
(\w\f \w V)_M \oplus \Big( \oplus \text{indec. $kM$-module
 with vertex $ \neq  D$}  \Big). \]

On the other hand,
\[\w V_M= ({\w V}_N)_M= V_M= \f V \oplus 
\Big( \oplus \text{indec. $kM$-module 
with vertex $ \neq  D$}  \Big).\]
Therefore, by Krull-Schmidt's theorem, we know the assertion of (b).

By the proof of (a) above it holds that $\w V \mid V^G$. 
Hence, Burry's Theorem in \cite[Theorem 4.4.8(ii)]{NagaoTsushima} 
implies that $\w\f \w V$ is a direct summand of $(\f V)^H$. This proves (c).
\end{proof}

The above statement on Green correspondence is applied 
in the situation of extending characters. 

\begin{lem} \label{ExtensionGreen}
Let $N \lhd G$ with $p \nmid |G:N|$. Further, suppose 
that $V$ is an indecomposable $kN$-module such that 
there is a $kG$-module $\w V$ 
with $\w V_N \cong V$ as $kN$-modules. Let $D$ be a vertex 
of $V$(and hence $\w V$ is an indecomposable $kG$-module 
with vertex $D$, see Lemma \ref{p'IndexGreen}(a)). 
For $H:= \NNN_G(D)$ and $M:= \NNN_N(D)$ it holds 
$$(\w\f \w V)_M \cong \f V $$ 
where $\w\f$ and $\f$ are the Green correspondences with 
respect to $(G, D, H)$ and  $(N, D, M)$, respectively.
\end{lem}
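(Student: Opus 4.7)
The plan is to reduce the statement to showing that a certain error term in Lemma \ref{p'IndexGreen}(b) vanishes. That lemma already yields a decomposition
\[
(\w\f \w V)_M \cong \f V \oplus W,
\]
in which no indecomposable summand of $W$ has vertex $D$, so it suffices to prove that every indecomposable direct summand of $(\w\f \w V)_M$ has vertex $D$.

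To establish this, I would combine part (c) of Lemma \ref{p'IndexGreen}, namely $\w\f \w V \mid (\f V)^H$, with the Mackey decomposition of $\bigl((\f V)^H\bigr)_M$. Two easy observations drive the argument. First, $M = N\cap H$ is normal in $H$ because $N \lhd G$, so $M^h = M$ for every $h \in H$ and the Mackey formula collapses to
\[
\bigl((\f V)^H\bigr)_M \cong \bigoplus_{hM \in H/M} (\f V)^h,
\]
each summand being $\f V$ with the $kM$-action twisted by $h$. Second, each $h \in H$ normalises $D$, so the indecomposable module $(\f V)^h$ still has vertex $D^h = D$. Consequently every indecomposable summand of $\bigl((\f V)^H\bigr)_M$, and hence of its direct summand $(\w\f \w V)_M$, has vertex $D$, which together with the first displayed isomorphism forces $W = 0$ by Krull--Schmidt.

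The ingredients---the Mackey restriction formula and the identity $\vx((\f V)^h) = \vx(\f V)^h$---are standard, and the normality $M \lhd H$ is immediate from $N \lhd G$, so I do not anticipate a real obstacle. The only point that truly matters is that $H = \NNN_G(D)$ preserves $D$ under conjugation, which is precisely what allows the vertex argument to close the gap left by Lemma \ref{p'IndexGreen}(b).
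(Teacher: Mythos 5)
Your proof is correct and follows essentially the same route as the paper: combine Lemma \ref{p'IndexGreen}(b), Burry's theorem from part (c), Mackey restriction using $M \lhd H$, and Krull--Schmidt. The only deviation is cosmetic: where the paper invokes the $H$-invariance of $\f V$ (so that each Mackey summand $(\f V)^h \cong \f V$ and $(\w\f\w V)_M$ must be a sum of copies of $\f V$), you instead argue directly that each $(\f V)^h$ retains vertex $D^h = D$ because $h \in \NNN_G(D)$, which suffices to force $W = 0$. This sidesteps one small intermediate fact but amounts to the same argument.
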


\begin{proof} First, recall that $\f V$ is $H$-invariant by 
definition. Now, since $M \lhd H$, Mackey formula in
\cite[Theorem 3.1.9]{NagaoTsushima} implies that
\begin{align*}
((\f V)^H)_M &=
\bigoplus_{h \in [M \backslash H / M]}
\Big( ( (\f V)^h) _{M^h \cap M}\Big)^M \\
&=\bigoplus_{h \in [H / M]} \Big( ((\f V)^h)_{M^h \cap M}\Big)^M 
=  \bigoplus_{h \in [H / M]}(\f V)^h,
\end{align*}
where for a $kM$-module $X$ we denote by $X^h$ the $kM$-module obtained via conjugation with $h\in H$.
We see that the last term that it is congruent to 
$|H/M|$-times copies of  $\f V$ as $kM$-module. Now, by Burry's theorem in
Lemma \ref{p'IndexGreen}(c) it follows that $\w\f\w V$ 
is a direct summand of the induced module $(\f V)^H$.
Hence, $(\w\f \w V)_M$ is a direct summand of $((\f V)^H)_M$, 
that is the direct sum of copies of $\f V$. Therefore it follows 
from Lemma \ref{p'IndexGreen}(b) that $(\w\f \w  V)_M$ is 
the direct sum of $\f V$ and indecomposable $kM$-modules with 
vertex different from  $D$. Thus, since $\f V$ has $D$ as a vertex, 
Krull-Schmidt's theorem implies that $(\w\f \w  V)_M = \f V$.
\end{proof}
 
We start with the following bijection between Brauer characters. 

\begin{lem}\label{prop4_2}
Let $N\lhd G$ and $b\in \Bl(N)$ with a cyclic defect 
group $D$, and let  $b'\in\Bl(\NNN_N(D))$ be the Brauer correspondent 
of $b$.
\enumalph \begin{enumerate}
\item Then there exists a natural $\NNN_G(D)_b$-equivariant bijection
\[ \Pi_{b,D}: \IBr(b)\longrightarrow \IBr(b').\] 
\item \label{prop4_2b}
Assume $p\nmid |G:N|$. Suppose that $B\in\Bl(G \mid  b)$ 
 and $\w\phi\in\IBr(B)$  with $\phi:=\w\phi_N\in\IBr(b)$. Then 
\[ \Big(\Pi_{B, D}(\w\phi)\Big)_{\NNN_N(D)} =\Pi_{b, D}(\phi).\]
\end{enumerate}
\end{lem}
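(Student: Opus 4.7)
The plan for part (a) is to apply Lemma~\ref{prop_bij_IBr} with $N$ and $b$ in place of $G$ and $B$, obtaining an $\Aut(N)_{b,D}$-equivariant bijection
\[
\Pi_{b,D}\colon \IBr(b)\longrightarrow \IBr(b')
\]
defined via the Green correspondence $\f_{(N,D,\NNN_N(D))}$. Since $N\lhd G$, conjugation by elements of $\NNN_G(D)_b$ induces automorphisms of $N$ stabilising both $D$ and $b$, giving a homomorphism $\NNN_G(D)_b\to \Aut(N)_{b,D}$. Pulling back the $\Aut(N)_{b,D}$-equivariance along this map yields the desired $\NNN_G(D)_b$-equivariance of $\Pi_{b,D}$.

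For part (b), set $H:=\NNN_G(D)$ and $M:=\NNN_N(D)=N\cap H$, and let $V$ be a simple $kN$-module affording $\phi$. Because $\w\phi_N=\phi$ is irreducible, any simple $kG$-module $\w V$ affording $\w\phi$ satisfies $\w V_N\cong V$. The hypothesis $p\nmid|G:N|$ implies that $D$ is a defect group of $B$ as well, so Lemma~\ref{prop_bij_IBr} applies to $B$, and Lemma~\ref{p'IndexGreen}(a) shows that $D$ is a vertex of $\w V$. I then invoke Lemma~\ref{ExtensionGreen} to obtain
\[
(\w\f\,\w V)_M \;\cong\; \f V
\]
as $kM$-modules, where $\w\f$ and $\f$ are the Green correspondences with respect to $(G,D,H)$ and $(N,D,M)$.

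To finish, I read the Brauer characters off the heads. By the construction in Lemma~\ref{prop_bij_IBr}, the head $S$ of the uniserial module $\w\f\w V$ affords $\Pi_{B,D}(\w\phi)$, while the head of $\f V$ affords $\Pi_{b,D}(\phi)$. The inclusion $M=N\cap H\hookrightarrow N$ yields an embedding $H/M\hookrightarrow G/N$, hence $p\nmid |H:M|$. Combined with $M\lhd H$ this gives $J(kH)=kH\cdot J(kM)$, and consequently $\Rad(W)_M=\Rad(W_M)$ for every finitely generated $kH$-module $W$. Applied to $W=\w\f\w V$ this shows that $S_M$ coincides with the head of $(\w\f\w V)_M\cong \f V$, and reading off Brauer characters gives $(\Pi_{B,D}(\w\phi))_M=\Pi_{b,D}(\phi)$.

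The substantive part of the argument is this last step: controlling the interaction between restriction to $M$ and passage to the head, so that the bijections defined via Green correspondence on two different layers match up. That step hinges on $p\nmid|H:M|$, which is delivered for free by the hypothesis $p\nmid|G:N|$; everything else is a careful assembly of Lemmas~\ref{p'IndexGreen} and \ref{ExtensionGreen} with the uniseriality of indecomposable modules in a block with cyclic defect group.
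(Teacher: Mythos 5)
Your proof takes the same route as the paper: part (a) is Lemma~\ref{prop_bij_IBr} applied to $N$ and $b$, and part (b) combines Lemma~\ref{ExtensionGreen} with the definition of $\Pi$ via heads of Green correspondents. The paper's proof is a two-line pointer; you correctly supply the step it leaves implicit, namely that $p\nmid|H:M|$ and $M\lhd H$ force $J(kH)=kH\cdot J(kM)$, so restriction to $M$ commutes with passage to heads, which is exactly what lets the isomorphism $(\w\f\,\w V)_M\cong\f V$ descend to the level of Brauer characters of the simple heads.
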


\begin{proof} Part (a) follows from Lemma \ref{prop_bij_IBr}. 
Part (b) is a consequence of Lemma \ref{ExtensionGreen} 
and the definition of $\Pi_{b,D}$ using the Green correspondence.
\end{proof}

In the later we use the notation of ordinary characters in those blocks.

\begin{notation}[Characters in blocks with cyclic defect groups]\label{4_5}
Let $G$ be a finite group and $B\in\Bl(G)$ a block with 
cyclic defect group $D$. Let $e$ be the inertial index of $B$ 
and $\Lambda$ be a representative set of the 
$\NNN_G(D,b_0)$-orbits on $\Irr(D)\setminus\{1_D\}$,
where $b_0\in\Bl(\Cent_G(D))$ with $(b_0)^G = B$.
We denote by 
$\chi_1,\ldots,\chi_e,\{ \chi_\la \mid \la\in\Lambda\}$ 
the ordinary characters of $B$ as in \cite[\S 68]{Dornhoff}. 
We write  $\Irr_{nex}(B)$ for the set of non-exceptional 
characters of $B$ and $\Irr_{ex}(B)$ for the  set 
$\{ \chi_\la \mid \la\in\Lambda\}$. We denote by 
$\phi_1,\ldots,\phi_e$ the irreducible Brauer characters  
of $B$. \end{notation}
The exceptional characters can be described by using the 
$*$-construction from  \cite{BrouePuig80}.

\subsection{The Brou\'e-Puig $*$-construction of class functions} 
Let $G$ be a finite group and $B$ a $p$-block of $G$ with 
a maximal $B$-Brauer pair $(D, b_D)$ (and hence $D$ is 
a defect group of $B$, and $b_D$ is a $p$-block of $C_G(D)$ 
with $(b_D)^G = B$). Let $\chi$ be any $\mathcal O$-valued 
class function defined on $G$ and let $\nu$ be any 
$\mathcal O$-valued class function defined on $D$ such that if 
$x \in G$ and if $(u,f)$ is a Brauer element with 
$(u,f) \in (D,b_D)$ and $(u,f)^x \in (D, b_D)$ then 
$\nu(u) = \nu(u^x)$, where $u^x := x^{-1}ux$. Then, 
the $*$-construction $\chi * \nu$ is well-defined, 
and actually $\chi * \nu$ is a generalized character 
of $G$ in $B$, see Theorem of \cite{BrouePuig80}. 
Note that this definition depends on the choice of 
$(D, b_D)$, see Remark 1 on p.446 of \cite{Cabanes88}.

Let $B$ be a block with cyclic defect group $D$, $p\neq 2$, $\la\in\La$ and $\eta_\la$
the sum of $\NNN_G(D)$-conjugates of $\la$. Then straight-forward 
calculations, as in Lemma 8 of  \cite{Watanabe12}, prove
that the exceptional character 
$\chi_\la$ is the unique constituent of $\chi_1*\eta_\la$ 
that is not $p$-rational.
For $p=2$ the characters $\chi_\la$ and $\chi_1*\la$ coincide.

In our later considerations we use the following 
well-known facts about blocks with cyclic defect groups

\begin{lem} Let $B$ be a nilpotent $p$-block with cyclic non-trivial defect group $D$. Let $\chi_1\in\Irr(B)$ be defined as in Notation \ref{4_5}.
\enumalph \begin{enumerate}
\item If $p$ is odd, then the non-exceptional characters of 
$B$ are the $p$-rational characters in $\Irr(B)$. 
\item \label{rem5_7a} For $p=2$ any block with cyclic defect is nilpotent.
\item \label{rem5_7c} For $p=2$ there are exactly two $2$-rational ordinary irreducible characters of $B$, namely $\chi_1$ and $\chi_1*\delta$ where $\delta$ is the unique character of $D$ of order $2$. 
\end{enumerate}
\end{lem}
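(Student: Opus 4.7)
My plan is to address the three parts in the order (b), (a), (c), exploiting the standard fact that the inertial index of a block with cyclic defect divides $p-1$.

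\emph{Part (b).} I would first observe that for a block with cyclic defect group $D$, the inertial quotient $T := \NNN_G(D,b_0)/\Cent_G(D)$ is a $p'$-subgroup of $\Aut(D)$. Since $D$ is cyclic of $p$-power order, the $p'$-part of $|\Aut(D)|$ equals $p-1$; for $p=2$ this forces $T = 1$, so the inertial index $e = |T|$ equals $1$, which for cyclic defect is equivalent to $B$ being nilpotent.

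\emph{Part (a).} From nilpotency I get $e = 1$, so Notation \ref{4_5} produces a single non-exceptional character $\chi_1$ and exceptional characters $\chi_\la$ indexed by $\la\in\Irr(D)\setminus\{1_D\}$ (the trivial inertial quotient makes every $\NNN_G(D,b_0)$-orbit on $\Irr(D)\setminus\{1_D\}$ a singleton, so $\eta_\la = \la$). The paragraph immediately preceding the lemma then asserts that for $p$ odd the exceptional character $\chi_\la$ is the unique non-$p$-rational constituent of $\chi_1 * \la$; in particular each $\chi_\la$ with $\la \neq 1_D$ is not $p$-rational. For the converse, I would invoke the Brou\'e--Puig theory of nilpotent blocks, cited through \cite[Theorem (52.8)]{Thevenaz} in the proof of Theorem \ref{thm1_3}, to identify $\chi_1$ as the unique character in $\Irr(B)$ satisfying $D \leq \ker\chi_1$. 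The set $\{\chi \in \Irr(B) : D \leq \ker \chi\}$ is stable under Galois automorphisms $\sigma$ fixing $p'$-th roots of unity (since $\ker(\sigma\chi) = \ker\chi$), so its uniqueness forces $\sigma\chi_1 = \chi_1$, i.e., $\chi_1$ is $p$-rational.

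\emph{Part (c).} By (b), $B$ is nilpotent, so the Galois argument of (a) still applies and shows $\chi_1$ is $2$-rational. I would then combine the stated identity $\chi_\la = \chi_1 * \la$ with the Galois-equivariance $\sigma(\chi_1*\la) = \chi_1 * (\sigma\la)$ (valid because $\chi_1$ is $2$-rational) to conclude that $\chi_\la$ is $2$-rational if and only if $\la$ is. For a character $\la$ of the cyclic $2$-group $D$, $2$-rationality forces $\la$ to take values in $\{\pm 1\}$, so $\la$ has order dividing $2$; the only two such choices are $\la = 1_D$ (giving $\chi_1$) and $\la = \delta$ (giving $\chi_1 * \delta$).

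\emph{Main obstacle.} The crucial step will be the identification of $\chi_1$ with the Brou\'e--Puig canonical character of $B$ (the one with $D$ in its kernel); once this is established, the Galois and $*$-construction manipulations are routine.
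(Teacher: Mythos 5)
Your Part (b) is essentially the paper's argument, phrased through inertial indices instead of directly observing that $\Aut(Q)$ is a $2$-group for every $Q\leq D$; both reach "the fusion is trivial, hence nilpotent." No concerns there.

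Parts (a) and (c), however, rest on a step that does not hold. You propose to identify $\chi_1$ as "the unique character in $\Irr(B)$ satisfying $D\leq\ker\chi_1$," citing \cite[Theorem (52.8)]{Thevenaz} via the paper's proof of Theorem \ref{thm1_3}. But in that proof the statement is applied to $B'\in\Bl(\NNN_G(D))$, where $D$ is \emph{normal}. For the block $B$ of $G$ itself, $D$ is in general not normal, and typically \emph{no} ordinary irreducible character of $B$ has $D$ in its kernel. A concrete instance: the unique $2$-dimensional faithful block of $\mathrm{SL}_2(3)$ at $p=3$ is nilpotent with cyclic defect $D\cong C_3$, yet all three of its characters are faithful, so $\{\chi\in\Irr(B):D\leq\ker\chi\}=\emptyset$. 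Hence the uniqueness argument that was supposed to force $\sigma\chi_1=\chi_1$ has no set to act on, and the $p$-rationality of $\chi_1$ is not established. You flagged this identification as the "main obstacle," and indeed it is: it is false as stated, not merely a step to be filled in.

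The subsequent Galois manipulation $\sigma(\chi_1*\lambda)=\chi_1^{\sigma}*\lambda^{\sigma}$ and the classification of $p$-rational linear characters of a cyclic $p$-group are both fine, so once $\sigma\chi_1=\chi_1$ is known the rest of (a) and (c) would go through. A workable replacement would characterize $\chi_1$ by the rationality of its generalized decomposition numbers, or simply invoke Dade/Dornhoff directly. That is in fact what the paper does: part (a) is cited to \cite[Theorem 68.1(8)]{Dornhoff} (which proves the stronger statement for \emph{all} blocks with cyclic defect, without nilpotency), and part (c) is cited to the footnote on p.~26 and Theorem 1(3) of \cite{Dade66}. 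So the paper's proof is a short citation, while your proposal aims to rederive the facts from nilpotency plus Brou\'e--Puig --- an interesting route, but one that currently fails at precisely the step you suspected.
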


\begin{proof}
According to Theorem 68.1(8) of \cite{Dornhoff} part (a) holds. 
Part (b) follows from the fact that the automorphism group of $D$ 
is then a $2$-group and hence $b$ is nilpotent. 
Part (c) is the footnote on p. 26 of \cite{Dade66} and follows from 
Part 3 of Theorem 1 of \cite{Dade66}.
\end{proof}
In order to deduce later from the considerations on the inductive 
AM condition for those blocks and the inductive BAW condition 
it is useful to know the following property of the 
decomposition matrix. Although it seems to be a well-known 
fact for the sake of completeness we give here a proof. 

\begin{thm}\label{dec_matrix_unitriangular}
Let $B\in\Bl(G)$ be a block with cyclic defect group $D$. Let $e$ be the 
inertial index of $B$ and $\Lambda$ a representative set 
of the $\NNN_G(D,b_0)$-orbits on $\Irr(D)\setminus\{1_D\}$. 
We denote by $\chi_1,\ldots,\chi_e,\{ \chi_\la \mid \la\in\Lambda\}$ 
the ordinary characters of $B$ as in \cite[\S 68]{Dornhoff}. 
Then we can label the non-exceptional characters 
$\chi_1,\ldots, \chi_e$ and the irreducible Brauer 
characters $\phi_1,\ldots,\phi_e$ of $B$ such that 
the associated decomposition matrix is unitriangular, i.e.
\[ (\chi_i)^0 =\sum_{j=1}^i d_{i,j} \phi_j\]
for some non-negative integers $d_{i,j}$ with $d_{i,i}=1$.
\end{thm}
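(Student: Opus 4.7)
The plan is to use the Brauer tree of $B$ and label the non-exceptional characters by stripping leaves one at a time, always choosing a non-exceptional leaf. By Dade's theory together with Green correspondence (see \cite[\S 68]{Dornhoff} or \cite[Ch.~V]{Alperinbook}), $B$ determines a tree $\mathcal T$ with $e+1$ vertices and $e$ edges: the $e$ edges are labelled by the irreducible Brauer characters $\phi_1,\ldots,\phi_e$, exactly one vertex (the \emph{exceptional vertex}) represents the family $\{\chi_\la \mid \la \in \La\}$ collectively, and the remaining $e$ vertices are in bijection with the non-exceptional characters. The key point I would exploit is that the decomposition number $d_{\chi,\phi}$ equals $1$ if the edge labelled $\phi$ is incident to the vertex labelled $\chi$ in $\mathcal T$, and equals $0$ otherwise; equivalently, $\chi^0$ is the sum of the $\phi_j$ labelling the edges incident to $\chi$.

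After noting that the case $e=0$ is vacuous, I would observe that every tree with at least two vertices has at least two leaves, and that since $\mathcal T$ has a unique exceptional vertex at least one leaf of $\mathcal T$ is non-exceptional. I label such a leaf $\chi_1$ and its unique incident edge $\phi_1$; then $(\chi_1)^0 = \phi_1$, matching the desired unitriangular shape in the first row. Then I proceed inductively: delete $\chi_1$ and $\phi_1$ from $\mathcal T$, obtaining a tree $\mathcal T_1$ on $e$ vertices and $e-1$ edges that still contains the exceptional vertex, and repeat. This yields orderings $\chi_1,\ldots,\chi_e$ and $\phi_1,\ldots,\phi_e$ such that at stage $i$ the edges incident to $\chi_i$ in the original tree $\mathcal T$ are exactly $\phi_i$ together with those $\phi_j$ with $j<i$ whose second endpoint happened to be $\chi_i$ at the moment $\phi_j$ was stripped. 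Consequently
\[
(\chi_i)^0 = \phi_i + \sum_{j<i} d_{i,j}\phi_j
\quad\text{with } d_{i,j}\in\{0,1\} \text{ and } d_{i,i}=1,
\]
which is exactly the claim.

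The only step that requires justification beyond routine bookkeeping is the availability of a non-exceptional leaf at every stage, but this is immediate from the two-leaves-in-a-tree fact combined with the uniqueness of the exceptional vertex of $\mathcal T_k$ for every $k$. Thus no substantial obstacle arises, which is consistent with the statement being, as acknowledged in the text, a well-known consequence of the Brauer tree description of blocks with cyclic defect.
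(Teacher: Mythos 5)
Your proof is correct and takes essentially the same route as the paper: both rest on the Brauer tree structure and the fact that the decomposition numbers lie in $\{0,1\}$, and both order the non-exceptional vertices so that each meets at most one higher-labelled edge. The only difference is that you make the labelling explicit by repeatedly stripping a non-exceptional leaf, whereas the paper simply asserts that such a labelling of vertices and edges exists; these are the same tree-ordering argument.
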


\begin{proof}
The proof uses intensively the results of \cite{Dade66}, 
see also \cite[\S 68]{Dornhoff}. First recall that according to 
\cite[Theorem 68.1]{Dornhoff} the decomposition numbers 
are contained in $\{0,1\}$. Hence the decomposition 
matrix of $B$ is determined by the number 
of exceptional characters and the Brauer graph of 
$B$, that is a tree according to \cite[Corollary 68.2]{Dornhoff}. 
Note that the Brauer tree has exactly one vertex associated 
with the exceptional characters called the exceptional vertex. 
We label the exceptional vertex by $e+1$ and the other 
vertices of the Brauer tree can be labeled by $\{1,\ldots, e\}$ 
and the edges by $\{1,\ldots, e\}$, such that the edge 
labeled by $i$ connects the vertex labeled by $i$ and 
some vertex labeled by $i'$ with $i'>i$.  Let $1\leq i \leq e$ 
and $I$ the labels of the edges connected with the vertex 
corresponding to $i$. Then $(\chi_i)^{0}$ coincides with 
$\sum_{j\in I}\phi_j$. By the choice of the labeling 
we have that $I\subseteq\{1,\ldots, i\}$. This proves the 
statement. \end{proof}

\section{Beyond blocks with cyclic defect groups}\label{sec5_beyond}
In this section we apply the recalled results from the previous section to 
describe the Clifford theory of characters belonging to blocks with cyclic defect groups. 

As a result of this section we see that various properties of characters belonging to a 
block with cyclic defect groups are already determined locally. 
Using Notation \ref{4_5} we establish equivariant bijections between the characters of Brauer corresponding 
blocks with cyclic defect groups. Further we study the existence of certain extensions. 
Since we are using various rationality arguments we treat the case where $p$ is odd
and where $p$ is even separately. 

For odd primes $p$ there is a natural bijection between 
characters of a block with cyclic defect groups and 
those of its Brauer correspondent. 

\begin{prop}\label{prop4_9}
Let $p$ be an odd prime, $N$ a finite group, and 
$b\in\Bl(N)$ with cyclic defect group $D$. 
Then there exists a unique $\Aut(N)_D$-equivariant bijection 
\[ \La_{b,D}: \Irr(b)\longrightarrow\Irr(b'),\]
where $b'\in\Bl(\NNN_N(D))$ is the Brauer correspondent of $b$. 
Further for every $\nu\in\Irr(\Z(N))$ the bijection satisfies 
the following inclusion
$\La_{b,D}(\Irr(b)\cap \Irr(N\mid \nu))\subseteq \Irr(\NNN_N(D)\mid \nu)$.
\end{prop}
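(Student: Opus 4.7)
The plan is to build the bijection separately on the non-exceptional and exceptional parts of $\Irr(b)$ and $\Irr(b')$, exploiting that for $p$ odd these two subsets are distinguished by $p$-rationality and that $b$ and $b'$ share the same combinatorial invariants. Fix $b_0\in\Bl(\Cent_N(D))$ with $(b_0)^N = b$; by the first Brauer correspondence the same $b_0$ satisfies $(b_0)^{\NNN_N(D)}=b'$, so the inertial index $e$, the indexing set $\La$ of $\NNN_N(D,b_0)$-orbits on $\Irr(D)\setminus\{1_D\}$, and hence the sizes of the decompositions $\Irr(b)=\Irr_{nex}(b)\sqcup\Irr_{ex}(b)$ and $\Irr(b')=\Irr_{nex}(b')\sqcup\Irr_{ex}(b')$ agree.

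On the non-exceptional side, I would apply the Brauer-character bijection $\Pi_{b,D}\colon \IBr(b)\to\IBr(b')$ of Lemma \ref{prop_bij_IBr}. Theorem \ref{dec_matrix_unitriangular} then provides, intrinsically from the Brauer tree on each side, a pairing between the $e$ non-exceptional characters and the $e$ irreducible Brauer characters via a lower-unitriangular decomposition matrix; composing these two pairings with $\Pi_{b,D}$ gives the restriction of $\La_{b,D}$ to $\Irr_{nex}(b)$. On the exceptional side, I set $\La_{b,D}(\chi_\la)=\chi'_\la$, using that both labelings are attached to the same parametrization by $\La$ coming from the common local datum $(D,b_0)$; this is well-defined because for $p$ odd the exceptional character $\chi_\la$ is characterized as the unique non-$p$-rational irreducible constituent of $\chi_1*\eta_\la$, a purely local description.

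To check $\Aut(N)_D$-equivariance I observe that the parametrization by $\La$ is attached to the Brauer pair $(D,b_0)$ and so is equivariantly transported on both sides, the Brauer tree with its intrinsic labeling is preserved by automorphisms stabilizing $D$, and Lemma \ref{prop_bij_IBr} already provides equivariance of $\Pi_{b,D}$. For the central-character inclusion, since $\Z(N)\leq\Cent_N(D)\leq\NNN_N(D)$, the $\Z(N)$-action on a simple $kN$-module is transferred to its Green correspondent, forcing $\Pi_{b,D}$ to preserve the underlying central character; combined with the fact that the non-exceptional lifts $\chi_i,\chi'_i$ and the exceptional characters produced via the $*$-construction inherit the $\Z(N)$-action from the associated Brauer characters and from $\chi_1,\chi'_1$ respectively, this yields the desired inclusion $\La_{b,D}(\Irr(b)\cap\Irr(N\mid\nu))\subseteq\Irr(\NNN_N(D)\mid\nu)$ for every $\nu\in\Irr(\Z(N))$.

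The main obstacle I expect is the \emph{uniqueness} statement, since $\Aut(N)_D$-equivariance by itself permits an arbitrary permutation within any $\Aut(N)_D$-orbit of characters. The idea is that for $p$ odd the partition into exceptional and non-exceptional characters coincides with the partition into non-$p$-rational and $p$-rational characters, so any competing equivariant bijection must also respect this splitting; within each part, the remaining freedom is then killed by the fact that the Brauer tree pairing of Theorem \ref{dec_matrix_unitriangular} and the $\La$-labeling of exceptional characters are intrinsic invariants of the source algebra of the block (coming directly from Dade's classification), hence any other bijection with the same natural properties would be forced to agree with ours. This rigidity argument, rather than the construction itself, is the technical core of the proof.
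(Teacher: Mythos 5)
Your construction is sound but takes a genuinely different route from the paper on the non-exceptional side. The paper invokes Dade's Lemma~4.10 of \cite{Dade_cyclic} to build $\Lambda_{b,D,nex}$ by a chain of intermediate bijections running through the normalizers $\NNN_N(D_i)$ and the dominated blocks of the quotients $\NNN_N(D_i)/D_i$, descending the cyclic group $D$ one step at a time. You instead propose the composite $\Irr_{nex}(b)\to\IBr(b)\to\IBr(b')\to\Irr_{nex}(b')$, where the outer maps are the vertex-edge pairings from the Brauer trees of $b$ and $b'$ and the middle map is the Green-correspondence bijection of Lemma~\ref{prop_bij_IBr}. This works, but you should make explicit that the pairing coming from the unitriangular decomposition matrix is canonical, not merely one choice among several: for any admissible labeling in Theorem~\ref{dec_matrix_unitriangular} the ``pointer'' from each non-exceptional vertex necessarily leads toward the exceptional vertex (following the pointers must strictly increase labels, so in a tree it must follow the unique path to the maximally-labeled vertex), hence the induced pairing is precisely \emph{edge $\mapsto$ endpoint farther from the exceptional vertex}, intrinsic to the tree. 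Your approach has the virtue of reusing Lemma~\ref{prop_bij_IBr} and Theorem~\ref{dec_matrix_unitriangular}, already proved in the paper; the paper's approach links more directly to the formulas from \cite{NavarroMcKay} used for the central-character inclusion and to the $p$-rationality arguments.

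On uniqueness you correctly sense the delicate point, but the ``rigidity'' argument you sketch is circular: you assert that any competing bijection ``with the same natural properties'' must agree without specifying what properties force this (mere $\Aut(N)_D$-equivariance plainly does not). It is worth noting, however, that the paper's proof does not establish the uniqueness claim either; it constructs a canonical $\Lambda_{b,D}$, verifies equivariance and the $\Irr(\Z(N))$-inclusion, and moves on. So your proposal is not weaker than the reference on this point, and your instinct to flag it is correct, but neither the paper nor your draft actually closes this gap.
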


The bijection is a consequence of Dade's work in 
\cite{Dade66} and \cite{Dade_cyclic}, see also \cite[\S 68]{Dornhoff}. 
For later applications we present a detailed construction of the bijection. 
\begin{proof}
As mentioned above $\Irr(b)=\Irr_{nex}(b) \cup \Irr_{ex}(b)$. 
Since $\Irr_{nex}(b)$ coincides with the set of 
$p$-rational characters in $\Irr(b)$, hence $\Irr_{nex}(b)$ 
and $\Irr_{ex}(b)$ are $\Aut(N)_b$-stable. 
 
Note that any $p$-rational character of $N$ is trivial on 
the Sylow $p$-subgroup of $\Z(N)$.                               
Let $a$ be the integer with $|D|=p^a$, and $D_i$ the subgroup 
of $D$ with $|D:D_i|=p^i$ for each integer $i$. We assume that $D_{a-1}$ is non-central. 
Otherwise the characters are considered as characters of $G/D_{a-1}$. 
According to Lemma 4.10 of \cite{Dade_cyclic} there exists a 
bijection $\Irr_{nex}(b)$ to $\Irr_{nex}(c_{a-1})$ where 
$c_i\in\Bl(\NNN_N(D_{i}))$ is the block with $(c_i)^N=b$. 
The characters of $\Irr_{nex}(c_{a-1})$ can be identified 
with the characters of the block 
$\o c_{a-1}\in\Bl(\NNN_N(D_{a-1})/{D_{a-1}})$, 
that is dominated in $c_{a-1}$. By Lemma (3.3) of 
\cite{NavarroMcKay} the block $\o c_{a-1}$ has defect 
group $D/D_{a-1}$. Successively applying this procedure 
we obtain an $\Aut(N)_b$-equivariant bijection 
\[ \Lambda_{b,D,nex}:\Irr_{nex}(b) \longrightarrow\Irr_{nex}(b').\]
We can choose a labeling of $\Irr_{nex}(b')$, such that 
$\Lambda_{b,D,nex} (\chi_i)=\chi'_i$ for every $1\leq i \leq e$. 
Let $\Z(N)_{p'}$ be the Hall $p'$-subgroup of $\Z(N)$. 
Since $b$ and $b'$ cover the same $p$-block of 
$\Z(N)_{p'}$, the bijection satisfies
\[\La_{b,D,nex}(\Irr(b)\cap \Irr(N\mid \nu))
\subseteq \Irr(\NNN_N(D)\mid \nu).\]

The exceptional characters of $b'$ are denoted 
by $\chi'_\la $ for $1\neq\la\in\Irr(D)$. Furthermore 
we define $\La_{b,D}:\Irr(b)\longrightarrow \Irr(b')$ 
to be the map that coincides with $\La_{b,D,nex}$ on 
$\Irr_{nex}(b)$ and satisfies  $\La_{b,D}(\chi_\la)=\chi'_\la 
$ for every $\la\in\La$. 

Let $\Z(N)_p$ be a Sylow $p$-subgroup of $\Z(N)$.  
The characters $\chi_\la$ and $\chi'_\la$ satisfy the formulas from 
\cite[p.1135]{NavarroMcKay}. Accordingly for 
$\nu\in\Irr(\Z(N)_{p})$ with $\la\in\Irr(D\mid \nu)$ 
the characters $\chi_\la$ and $\chi_{\la'}$ are contained 
in $\Irr(N\mid \nu)$ and $\Irr(\NNN_N(D)\mid \nu)$, respectively. 

For the existence of the bijection it remains to prove that $\La_{b,D}$ is $\Aut(N)_{D,b}$-equivariant  on  $\Irr_{ex}(b)$. 
Let $\phi\in\IBr(\Cent_N(D))$ such that $\bl(\phi)^N=b$. 
Then $\Aut(N)_{D,b}$ is generated by automorphisms induced 
by $\NNN_N(D)$ and $\Aut(N)_{D,\phi}$. After choosing 
$\phi$ the character $\chi_1* \eta_\la$ is uniquely defined 
where $\eta_\la$ is the sum of characters that are 
$\NNN_N(D)$-conjugate to $\la$. For $\la\neq 1$ the 
character $\chi_\la$ is a unique non-$p$-rational 
constituent of $\chi_1* \eta_\la$. Hence the stabilizer 
of $\chi_\la$ in $\Aut(N)_{D,\phi}$ coincides with 
$\Aut(N)_{D,\phi,\la}$. Further for 
$\sigma\in\Aut(N)_{D,\phi}$ and $1\neq\la\in\Irr(D)$ the character $(\chi_\la)^\sigma$ 
coincides with $\chi_{\la^\sigma}$ since $\chi_\la$ is the unique non-$p$-rational constituent of 
$\chi*\la$.  An analogous statement holds for 
$\chi'_\la$ and hence the map $\La_{b,D}$ is an 
$\Aut(N)_{b,D}$-equivariant bijection. This proves the statement.
\end{proof}

This bijection is compatible with "going-to-quotients". 

\begin{cor} \label{cor5_2}
 Let $p$ be an odd prime, $N$ a finite group, and $Z\leq \Z(N)$. Let 
$b\in\Bl(N)$ be a $p$-block with cyclic defect group $D$. 
Let $\o b$ be the unique block of $N/Z$ dominated by $b$ and 
$\o \chi\in\Irr(\o b)$. Let $\chi\in\Irr(b)$ be the lift 
of $\o \chi$. Then $\La_{\o b, DZ/Z}(\o\chi)$ 
defined as in Proposition \ref{prop4_9} lifts to $\La_{b,D}(\chi)$.
\end{cor}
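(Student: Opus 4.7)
The plan is to split the verification along the decomposition $\Irr(b) = \Irr_{nex}(b) \cup \Irr_{ex}(b)$ used in the construction of $\La_{b,D}$ in the proof of Proposition \ref{prop4_9}, and in each case track the bijection through the central quotient by $Z$. Write $\o N := N/Z$ and $\o D := DZ/Z$. Since $Z \leq \ker(\chi)$ and $Z \lhd \NNN_N(D)$, the block $\o b$ has defect group $\o D \cong D/(D\cap Z)$, and its Brauer correspondent in $\Bl(\NNN_{\o N}(\o D)) = \Bl(\NNN_N(D)/Z)$ is exactly the block dominated by $b'$, so the statement makes sense. Fix moreover a block $b_0 \in \Bl(\Cent_N(D))$ with $(b_0)^N = b$; since $Z \leq \Cent_N(D)$, it dominates a block $\o{b_0} \in \Bl(\Cent_{\o N}(\o D))$ with $(\o{b_0})^{\o N} = \o b$, which fixes compatibly the choice of maximal Brauer pair on each side.

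For $\chi \in \Irr_{nex}(b)$, the bijection $\La_{b,D,nex}$ was built by iterating Lemma 4.10 of \cite{Dade_cyclic} along the chain $D = D_0 > D_1 > \cdots > D_{a-1}$ and descending at each step to the block dominated in $\NNN_N(D_i)/D_i$. I would proceed by induction on $a$. The base case is a block of normal (cyclic or trivial) defect, where the correspondence is given by restriction of characters and visibly commutes with the $Z$-quotient. For the inductive step, the key observation is that quotienting by $D_i$ and quotienting by $Z$ commute, so the square of normalizers and the resulting square of dominated blocks close up by uniqueness of the dominated block; preservation of $p$-rationality under the central quotient $N \to \o N$ then ensures that non-exceptional characters of $b$ with $Z$ in their kernel correspond exactly to the non-exceptional characters of $\o b$.

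For $\chi = \chi_\la \in \Irr_{ex}(b)$, the character $\chi_\la$ is characterised as the unique non-$p$-rational irreducible constituent of $\chi_1 * \eta_\la$. Central character considerations for the $*$-construction, applied to the fact that $Z \leq \ker \chi_\la$, force $\la|_{D \cap Z} = 1_{D \cap Z}$, so $\la$ descends to some $\o\la \in \Irr(\o D)$. The Brou\'e--Puig $*$-construction is compatible with the canonical quotient $N \to \o N$ relative to the matched maximal Brauer pairs $(D, b_0)$ and $(\o D, \o{b_0})$, hence $\chi_1 * \eta_\la$ lifts $\o\chi_1 * \eta_{\o\la}$; since lifting preserves $p$-rationality of constituents, $\chi_\la$ lifts $\o\chi_{\o\la}$. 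Running exactly the same argument inside $\NNN_N(D)$ with $\chi'_\la$ then gives the required identification $\La_{\o b, \o D}(\o\chi) = \overline{\La_{b,D}(\chi)}$.

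The main obstacle I anticipate is the bookkeeping around the iterated descent in the non-exceptional case: at each level one has two parallel towers of normalizers, in $N$ and in $\o N$, linked by block domination, and one must verify that both towers induce the same Dade correspondence. Once that diagram is set up, the exceptional case is essentially formal, the only residual subtlety being to confirm that the parametrisation $\la \mapsto \chi_\la$ and its quotient $\o\la \mapsto \o\chi_{\o\la}$ are genuinely compatible under the chosen pair $(b_0, \o{b_0})$.
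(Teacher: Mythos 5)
Your proposal is correct and takes essentially the same route as the paper: split along $\Irr(b)=\Irr_{nex}(b)\cup\Irr_{ex}(b)$ and check that the defining construction from Proposition~\ref{prop4_9} commutes with the central quotient by $Z$. The paper's own proof disposes of both cases in two sentences, and the bookkeeping you anticipate around the iterated descent in the non-exceptional case and the compatibility of the $*$-construction with quotients in the exceptional case is precisely what it leaves implicit.
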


\begin{proof}
If $\chi\in\Irr_{nex}(b)$, then the character $\o \chi$ is 
non-exceptional, as well. Then $\La_{\o b, DZ/Z}(\o\chi)$ 
and $\La_{ b, D}(\chi)$ are defined using the Green 
correspondence. By the definition of $\La_{b,D}$ and 
$\La_{\o b, DZ/Z}$ the statement also holds for exceptional characters. 
\end{proof}

The bijection $\La_{b,D}$ also preserves the 
"property of extendibility of characters", more 
precisely it maps characters of $b$ that extend to $G$, 
to characters of $b'$ with a similar property. 

\begin{prop}\label{prop5_11}
Let $p$ be an odd prime, $N\lhd G$ with $p\nmid |G:N|$, and 
$b\in\Bl(N)$ a $p$-block with a cyclic non-central 
defect group $D$. Let $B\in\Bl(G\mid b)$ 
and $\w\chi\in\Irr(B)$ a character such that 
$\chi:=\w\chi_N\in\Irr(N)$. Then there is an extension 
$\w\chi'$ of $\La_{b,D}(\chi)$ to $\NNN_G(D)$ such that 
$\bl(\w\chi')^G=\bl(\w\chi)$.
\end{prop}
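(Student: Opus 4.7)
Since $\widetilde\chi$ is an extension of $\chi$, the character $\chi$ and hence the block $b$ are $G$-invariant. Under the hypothesis $p\nmid |G:N|$, the Harris-Kn\"orr theorem produces a unique $B'\in\Bl(\NNN_G(D))$ with $(B')^G=B$; this $B'$ is the Brauer correspondent of $B$, covers $b'$, and has defect group $D$. By the $\Aut(N)_D$-equivariance of $\Lambda_{b,D}$ from Proposition \ref{prop4_9}, the character $\chi':=\Lambda_{b,D}(\chi)$ is $\NNN_G(D)$-invariant. The task thus reduces to producing an extension $\widetilde{\chi'}\in\Irr(B')$ of $\chi'$; the block relation $\bl(\widetilde{\chi'})^G=B=\bl(\widetilde\chi)$ is then automatic. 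I would split the proof according to whether $\chi$ is non-exceptional or exceptional in $b$.

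For $\chi\in\Irr_{nex}(b)$, I plan to induct on $|D|$, mirroring the construction of $\Lambda_{b,D,nex}$ given in the proof of Proposition \ref{prop4_9}. That construction first matches $\chi$ with a non-exceptional character $\hat\chi\in\Irr_{nex}(c_{a-1})$ via Dade's Lemma 4.10 of \cite{Dade_cyclic}, where $c_{a-1}\in\Bl(\NNN_N(D_{a-1}))$ satisfies $(c_{a-1})^N=b$; then it passes to the dominated block $\overline{c}_{a-1}\in\Bl(\NNN_N(D_{a-1})/D_{a-1})$, whose defect group $D/D_{a-1}$ has order strictly less than $|D|$. Applying Harris-Kn\"orr to $B$ at the first stage produces a block $B^{(1)}\in\Bl(\NNN_G(D_{a-1}))$ with $(B^{(1)})^G=B$ containing an extension of $\hat\chi$ to $\NNN_G(D_{a-1})$; the induction hypothesis, applied to the pair $(\NNN_N(D_{a-1})/D_{a-1},\NNN_G(D_{a-1})/D_{a-1})$ and the block $\overline{c}_{a-1}$, then yields the required extension at the lower level, which inflates back to $\NNN_G(D)$ with the correct block. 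The base case is reached when the defect group becomes central or trivial in the quotient, reducing to a standard defect-zero statement.

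For $\chi=\chi_\lambda\in\Irr_{ex}(b)$, I would exploit the Brou\'e-Puig $*$-construction. Since $p$ is odd, $\chi_\lambda$ is the unique non-$p$-rational irreducible constituent of $\chi_1*\eta_\lambda$, and $\chi'_\lambda=\Lambda_{b,D}(\chi_\lambda)$ is the unique non-$p$-rational irreducible constituent of $\chi'_1*\eta_\lambda$. The non-exceptional case, applied to $\chi_1$ over its stabilizer $G_{\chi_1}$ (where a compatible extension of $\chi_1$ is extracted from $\widetilde\chi$ using the invariance of the $p$-rational part of $\chi_1*\eta_\lambda$), yields an extension $\widetilde{\chi'_1}$ of $\chi'_1$ to the appropriate normalizer in $B'$. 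Forming $\widetilde{\chi'_1}*\eta_\lambda$, using compatibility of the $*$-construction with restriction and invariance of $\eta_\lambda$ under the relevant stabilizer, and extracting the unique non-$p$-rational irreducible constituent produces $\widetilde{\chi'}=\widetilde{\chi'_\lambda}$; inducing up to $\NNN_G(D)$ via Clifford theory if necessary gives the extension on the full group.

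The main obstacle is tracking block compatibility through the iterated Harris-Kn\"orr reductions in the non-exceptional case, where after descending to $\NNN_N(D_{a-1})$ and quotienting by $D_{a-1}$ one must verify that the resulting block at the quotient level is the Brauer correspondent of the block containing the extension produced at the intermediate stage. A secondary technical point is the identification of the stabilizers of $\chi_1$ and $\eta_\lambda$ in the exceptional case, so that the $*$-construction is performed in the correct subgroup of $\NNN_G(D)$ and the extracted non-$p$-rational constituent indeed lies in $B'$.
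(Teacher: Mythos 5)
Your reduction to producing an extension of $\chi':=\La_{b,D}(\chi)$ lying in the Harris--Kn\"orr correspondent $B'$ of $B$ is a sound framing, but in each of the two cases the crucial step is asserted rather than proved, and the assertion is essentially as hard as the proposition itself.

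In the non-exceptional case the gap is the claim that the Harris--Kn\"orr block $B^{(1)}\in\Bl(\NNN_G(D_{a-1}))$ with $(B^{(1)})^G=B$ ``contains an extension of $\hat\chi$ to $\NNN_G(D_{a-1})$.'' Harris--Kn\"orr only produces the block; it says nothing about whether $\hat\chi$ extends, nor in which block an extension, if it exists, would lie. Moreover this step takes place at the pair $(\NNN_N(D_{a-1}),\NNN_G(D_{a-1}))$, where the defect group is still the full group $D$, so your induction on $|D|$ cannot supply it -- you would need a separate argument there, which is precisely the content of the proposition. The paper avoids this entirely: from the unitriangular decomposition matrix of Theorem~\ref{dec_matrix_unitriangular} it finds a Brauer constituent $\phi_i$ of $\chi^0$ of multiplicity one, which therefore extends to $G$ inside $B$; Lemma~\ref{prop4_2b} transports this extension via Green correspondence to an extension of $\Pi_{b,D}(\phi_i)$ lying in the Brauer correspondent of $B$; and the star shape of the Brauer tree of $b'$ shows that every non-exceptional character of $b'$ is the lift of a Brauer character of $b'$ and hence also extends. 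No induction on $|D|$ occurs.

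In the exceptional case the plan needs ``a compatible extension of $\chi_1$ extracted from $\w\chi$,'' but no such thing is available. Only $\chi=\chi_\lambda$ is given to be $G$-invariant; the $\Aut(N)_{b,D}$-action on $\Irr_{nex}(b)$ is not determined by its action on $\Irr_{ex}(b)$, so $\chi_1$ need not be $G$-invariant, and since $G_{\chi_1}/N$ is a $p'$-group the $p$-rationality of $\chi_1$ does not by itself force $\chi_1$ to extend to $G_{\chi_1}$. ``The $p$-rational part of $\chi_1*\eta_\lambda$'' is not a well-defined source for an extension of $\chi_1$, and the compatibility of the $*$-construction with restriction across $\NNN_N(D)\lhd\NNN_G(D)$ and with the required block relation $\bl(\w\chi')^G=\bl(\w\chi)$ is not verified. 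The paper's argument does not use $\chi_1$ or the $*$-construction at all: it exploits that $\chi'$ is $\NNN_G(D)$-invariant and of the form $\psi^{\NNN_N(D)}$ with $\psi\in\Irr(\Cent_G(D)\mid\lambda)$, invokes the $G[b]$-theory of Dade and Murai (in particular $\NNN_G(D)[b']=\NNN_N(D)C_{\w\phi}$) together with \cite[Theorem C(a)(2)]{KoshitaniSpaeth1} to obtain a unique extension of $\chi'$ to $\NNN_G(D)[b']$ satisfying the block relation over $G[b]$, and then uses cyclicity of $\NNN_G(D)/\NNN_G(D)[b']$ to extend further, carrying the block relation up via \cite[Lemma 3.3 and Proposition 1.9]{DadeBlockExtensions}.
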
 

\begin{proof} 
Let the characters in $\Irr_{nex}(b)\cup\IBr(b)$ be labeled 
as in Theorem \ref{dec_matrix_unitriangular}. 
First assume that $\chi$ is non-exceptional 
and hence $\chi=\chi_i$ for some $i$. 
According to Theorem \ref{dec_matrix_unitriangular} 
the character $\phi_i$ is invariant in $G$ and is 
a constituent of $\chi^0$ with multiplicity $1$. 
Hence $\phi_i$ extends to $G$. 

Let $b'\in\IBr(\NNN_N(D))$ be the Brauer correspondent of $b$.
From Lemma \ref{prop4_2b} we know that some character of 
$\IBr(b')$ extends to $\NNN_G(D)$. 
Let $B'$ be a block of $\NNN_G(D)$ to which        
this extension belongs.
Note that $B'$ has $D$ as a defect group according to 
\cite[Theorem (9.26)]{Navarro}.                                
Since $D$ is cyclic, all characters of 
$\IBr(B')$ have the same degree. 
This implies that every character of $\IBr(b')$ extends to  
$\NNN_G(D)$. Since all ordinary 
non-exceptional characters are lifts of those characters, 
$\La_{b,D}(\chi)$ extends to $\NNN_G(D)$, as well. 

Next we consider the case where $\chi\in\Irr_{ex}(b)$. 
Since $\La_{b,D}$ is $\NNN_G(D)$-equivariant,  
$\chi':=\La_{b,D}(\chi)$ is non-exceptional and 
$\NNN_G(D)$-invariant. 
Let $\la\in\Irr(D)$ be such 
that $\chi'=\chi'_\la$ in the notation of \ref{4_5}.
This implies 
\[\NNN_G(D)=\NNN_N(D)\NNN_G(D)_{\la}.\]
Straight-forward considerations using the structure of the groups show that
$\NNN_G(D)_\la= \Cent_G(D)_\la$ since $\NNN_G(D)_\la/\Cent_N(D)$
is a $p'$-group.

Let $\phi\in\IBr(\Cent_G(D))$ such that $\bl(\phi)$ is 
covered by $b'$, and $\w\phi$ some extension of 
$\phi$ to $(\NNN_N(D))_\phi$. Then we denote by 
$C_\phi$ and $C_{\w\phi}$ their stabilizers in 
$C:=\Cent_G(D)$. As in \cite[Lemma 3.12]{Murai_Dade} 
we see that $C_{\w\phi}$ acts on the set of extensions of 
$\phi$ to $\NNN_N(D)_\phi$ by multiplication with a 
linear character of $\NNN_N(D)_\phi/\Cent_N(D)$. Note 
that $\NNN_N(D)_\phi/\Cent_N(D)$ is a cyclic $p'$-group, 
and hence $C_\phi/C_{\w\phi}$ is cyclic as well. From 
\cite[Lemma 3.12 and Theorem 3.13]{Murai_Dade} we know that 
\[ \NNN_G(D) [b']= \NNN_N(D) C_{\w \phi},\]
since $C_{\w\phi}$ is the stabilizer of a bilinear 
form defined in the subsection before Lemma 3.12 of \cite{Murai_Dade},
see also \cite[Corollary 12.6]{DadeBlockExtensions} 
for the original proof. Since $\chi'$ is an exceptional 
character, $\chi':=\psi^N$ for some 
$\psi\in\Irr(\Cent_G(D)\mid \la)$, where 
$1\neq \la\in\Irr(D)$ and $\bl(\psi)=\bl(\phi)$. 
Note that $\psi$ is the unique character in 
$\Irr(\Cent_G(D)\mid  \la)$ with $\psi^0=\phi$.  
Since $\chi'$ is $\NNN_G(D)$-invariant, 
\begin{align*}
\NNN_G(D)&=(\NNN_G(D))_{\chi'} 
= \NNN_N(D) (\NNN_G(D)_\psi)=\\                            
&= \NNN_N(D) (\NNN_G(D)_ \la\cap \NNN_G(D)_\phi)
=\NNN_N(D) \Cent_G(D)_{\phi}.
\end{align*}
According to 
\cite[Theorem C(a)(2)]{KoshitaniSpaeth1} 
combined with \cite[Theorem 4.1]{Murai_Dade}, 
$\chi'$ has a unique extension $\w\chi'$ to 
$\NNN_G(D)[b']$ such that 
\[\bl(\w\chi')^{G[b]}=\bl(\w\chi_{G[b]}).\]
According to \cite[Proposition 9]{Kuelshammer} we 
have $N\NNN_G(D)[b']=G[b]$ accordingly there is a 
unique block $\w b' \in\Bl(\NNN_G(D)[b']\mid b')$ with 
$(\w b')^{G[b]}=\bl(\w\chi_{G[b]})$, see \cite[Theorem (9.28)]{Navarro}
(or \cite{HarrisKnoerr}). The block 
$\bl( \w\chi_{G[b]})$ is $G$-invariant by definition. 
Hence its Harris-Kn\"orr correspondent $\w b'$ is 
$\NNN_G(D)$-invariant, as well. By the definition 
of $\w\chi'$ this implies that $\w\chi'$ is $\NNN_G(D)$-invariant.  
Since $\NNN_G(D)/\NNN_G(D)[b']$ is isomorphic to 
$C_\phi/C_{\w \phi}$ and hence cyclic, $\w\chi'$ has 
an extension $\wh \chi'$ to $\NNN_G(D)$, 
see \cite[Corollary (11.22)]{Isa}.               
According to \cite[Lemma 3.3 and Proposition 1.9]{DadeBlockExtensions}
(see also \cite[Theorem 3.5(i)]{Murai_Dade}),
this extension satisfies
\begin{align*}
 \bl(\wh \chi')^G&=\bl(\w\chi).\qedhere
\end{align*}
\end{proof}

A statement like above holds when $G/N$ is a $p$-group.

\begin{lem}\label{prop5_12}
Let $p$ be an odd prime, $N\lhd G$ such that $G/N$ is a $p$-group,
and $b\in\Bl(N)$ $p$-block with cyclic defect group $D$. 
Then, a character $\chi\in\Irr(b)$ extends to $G$ 
if and only if $\La_{b,D}(\chi)$ extends to $\NNN_G(D)$. 
Furthermore the extensions $\w\chi\in\Irr(G)$ of $\chi$ 
and $\w\chi'\in\Irr(\NNN_G(D))$  of 
$\La_{b,D}(\chi)$ can be chosen such that 
\[ \Irr(\Z(G)\mid \w\chi)=\Irr(\Z(G)\mid \w\chi').\]
\end{lem}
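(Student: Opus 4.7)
My plan is to reduce to the case where $\chi$ is $G$-invariant, then separately handle exceptional and non-exceptional characters, inducting on $|D|$ for the latter.

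Since $\chi$ extending to $G$ forces $G$-invariance (and likewise $\La_{b,D}(\chi)$ extending to $\NNN_G(D)$ forces $\NNN_G(D)$-invariance), and since Frattini's argument applied to the $G$-invariant block $b$ yields $G = N\,\NNN_G(D)$ and hence $\NNN_G(D)/\NNN_N(D) \cong G/N$, the $\Aut(N)_{b,D}$-equivariance of $\La_{b,D}$ from Proposition \ref{prop4_9} translates $G$-invariance of $\chi$ into $\NNN_G(D)$-invariance of $\La_{b,D}(\chi)$. Thus the non-invariant case is immediate, and I may assume both characters are invariant.

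For exceptional $\chi = \chi_\la$, I would exploit the characterisation (for odd $p$) of $\chi_\la$ as the unique non-$p$-rational constituent of the Brou\'e--Puig $*$-product $\chi_1 * \eta_\la$. Since the $*$-construction is natural with respect to extensions to overgroups preserving the maximal $B$-Brauer pair, an extension $\w\chi_1$ of the non-exceptional $\chi_1$ to $G$ produces an extension of $\chi_\la$ as the unique non-$p$-rational constituent of $\w\chi_1 * \eta_\la$; the symmetric construction on the $\NNN_G(D)$-side yields an extension of $\chi'_\la = \La_{b,D}(\chi_\la)$ from one of $\chi'_1 = \La_{b,D}(\chi_1)$. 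This reduces the claim for exceptional characters to the non-exceptional case.

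For non-exceptional $\chi = \chi_i$, I would induct on $|D| = p^a$. By Proposition \ref{prop4_9}, the bijection $\La_{b,D,\mathrm{nex}}$ factors as Dade's bijection $\Irr_{nex}(b) \to \Irr_{nex}(c_{a-1})$ (where $c_{a-1} \in \Bl(\NNN_N(D_{a-1}))$ satisfies $(c_{a-1})^N = b$), followed by domination to $\o c_{a-1} \in \Bl(\NNN_N(D_{a-1})/D_{a-1})$ with cyclic defect group $D/D_{a-1}$ of order $p^{a-1}$, followed by iteration. Since $D_{a-1}$ is characteristic in $D$, it is $G$-invariant, and Frattini yields $\NNN_G(D_{a-1})/\NNN_N(D_{a-1}) \cong G/N$, so the inductive hypothesis applies to $\o c_{a-1}$ and the $p$-group quotient $\NNN_G(D_{a-1})/\NNN_N(D_{a-1})$. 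This reduces the problem to transferring extendibility across Dade's bijection $\Irr_{nex}(b) \to \Irr_{nex}(c_{a-1})$ under the $p$-group extension $N \leq G$, which I would handle by a Harris--Kn\"orr-type argument (analogous in spirit to the proof of Proposition \ref{prop5_11} for $p'$-index), invoking Theorem C of \cite{KoshitaniSpaeth1} when necessary.

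For the central character condition on $\Z(G)$, since $\Z(G) \leq \NNN_G(D)$ and $\La_{b,D}$ respects $\Z(N)$-coverings by Proposition \ref{prop4_9}, the extensions of $\chi$ to $G$ form a torsor under $\Irr(G/N)$, those of $\La_{b,D}(\chi)$ to $\NNN_G(D)$ form a torsor under $\Irr(\NNN_G(D)/\NNN_N(D))$, and the canonical isomorphism $G/N \cong \NNN_G(D)/\NNN_N(D)$ identifies compatible pairs. The main technical obstacle will be the Dade-correspondence transfer in the inductive step, where the standard Harris--Kn\"orr machinery does not directly apply and one must use the specific source-algebra structure of blocks with cyclic defect.
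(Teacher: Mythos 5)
Your proposal takes a fundamentally different and much more laborious route than the paper for the non-exceptional case, and there it contains a gap you yourself flag but do not close.

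The paper's proof of the non-exceptional case is essentially one line: for odd $p$, the non-exceptional characters are exactly the $p$-rational ones (Theorem 68.1(8) of \cite{Dornhoff}), and since $G/N$ is a $p$-group, an invariant $p$-rational character of $N$ extends to a \emph{$p$-rational} character of $G$ by \cite[Theorem (6.30)]{Isa}. The same applies to $\chi' = \La_{b,D}(\chi)$ over $\NNN_N(D) \lhd \NNN_G(D)$. Invariance of $\chi$ transfers to invariance of $\chi'$ via the equivariance of $\La_{b,D}$, which gives the ``iff''. The $\Z(G)$-condition is then automatic: a $p$-rational character of $\Z(G)$ is trivial on its Sylow $p$-subgroup, and the $p'$-part of the covered central character is determined by the block. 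You miss this completely and instead propose an induction on $|D|$ that forces you to transfer extendibility across Dade's correspondence $\Irr_{nex}(b) \to \Irr_{nex}(c_{a-1})$ for a $p$-group quotient. This is where the argument breaks: the Harris--Kn\"orr / Murai / Dade block-extension machinery you invoke, and Theorem C of \cite{KoshitaniSpaeth1}, are all calibrated to $p'$-index normal subgroups. None of them applies when $G/N$ is a $p$-group, and you concede as much (``the main technical obstacle will be the Dade-correspondence transfer in the inductive step, where the standard Harris--Kn\"orr machinery does not directly apply''). That is not a technicality to be deferred; it is the heart of the proof, and your sketch leaves it open. A further warning sign is that your plan never uses that $p$ is odd, yet the statement is false in this strength for $p = 2$ (see Proposition \ref{prop5_17} for the weaker $p=2$ version). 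The paper's use of $p$-rationality is exactly where the hypothesis $p$ odd enters.

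The reduction of the exceptional case to the non-exceptional case via the $*$-construction is in the right spirit, but as written it is also slippery: $\eta_\la$ is a class function on $D$, whereas an extension $\w\chi_1$ to $G$ lives in a block with a possibly larger defect group $\w D \supseteq D$, so $\w\chi_1 * \eta_\la$ is not directly well-defined with respect to a maximal Brauer pair for $\bl(\w\chi_1)$. The paper handles this carefully: it separates the nilpotent and non-nilpotent cases, citing \cite{Watanabe12} for the non-nilpotent case, and in the nilpotent case writes the extension as $\w\chi_1' * \nu$ for $\nu \in \Irr(\w D)$ and invokes \cite[Theorem 1(ii)]{Cabanes87} -- that is, the $*$-product is taken with a character of the larger group $\w D$, not with $\eta_\la$ pulled up from $D$. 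Finally, your torsor argument for the $\Z(G)$-condition only shows how extensions move relative to one another; it provides no starting point, i.e.\ no proof that a compatible pair exists. The paper supplies that starting point precisely by choosing the canonical $p$-rational extensions.
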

\begin{proof}
We consider first the case where $\chi$ is a non-exceptional 
character. Then $\chi$ is $p$-rational. Analogously by the definition 
of $\La_{b,D}$ the character $\chi':=\La_{b,D}(\chi)$ is 
non-exceptional and hence $p$-rational, as well. Since 
$\chi'$ is $\NNN_G(D)$-invariant, $\chi'$ extends to 
some $p$-rational character of $\NNN_G(D)$ according to 
\cite[Theorem (6.30)]{Isa}. In this situation let $\w\chi$ and 
$\w\chi'$  be $p$-rational extensions of $\chi$ and $\chi'$. Accordingly 
$\Irr(\Z(G)\mid \w\chi)$ and $\Irr(\Z(G)\mid \w\chi')$ 
contain only $p$-rational characters. This implies 
that both $\ker(\w\chi)$ and $\ker(\w\chi')$ contain the Sylow 
$p$-subgroup of $\Z(G)$. Further for every $p'$-group 
$Z\leq \Z(G)$ the blocks $\bl(\chi)$ and $\bl(\chi')$ 
cover the same block of $Z$. 
This proves 
\[\Irr(\Z(G)\mid \w\chi)=\Irr(\Z(G)\mid \w\chi').\]

Now we assume that $\chi\in\Irr_{ex}(b)$ and that 
$b$ is not nilpotent. 
Let $b'\in\Bl(\NNN_N(D))$ be the Brauer correspondent of $b$.
Then $\chi':=\La_{b,D}(\chi)\in\Irr_{ex}(b')$ extends 
to its stabilizer in $\NNN_G(D)_{\chi'}$, see paragraph before 
Lemma 10 in \cite{Watanabe12}. The formula defining the 
extension of $\chi$ there allows that one can choose an 
extension of $\chi'$ with the required property. 

If $b$ is nilpotent, then $b'$ is also nilpotent. Let 
$\chi_1\in\Irr_{ex}(b')$. 
The character $\chi_1$ is $\NNN_G(D)$-invariant, 
since $b'$ is $\NNN_G(D)$-stable and $\chi_1$ is a unique 
$p$-rational character. This implies that $\chi_1$ extends 
to some $\w\chi_1\in\Irr(G)$ and $\w\chi=\chi_1* \nu$ for some 
character $\nu\in\Irr(\w D)$ where $\w D$ is a defect group of 
$\bl(\w \chi)$ containing $D$. Further $\chi=\chi_1*\nu_1$ where 
$\nu_1=\nu_D$. Hence by the above $\chi'_1\in\Irr(b')$ extends 
to some $\w\chi'_1\in\Irr(\NNN_G(D))$.  According to 
\cite[Theorem 1(ii)]{Cabanes87} the character $\w\chi'_1* \nu$ is an extension 
of $\chi':=\chi_1'* \nu_1$. This extension of $\chi'$ has the 
required property. 
\end{proof}

For $p=2$ similar results hold but their proofs become 
more involved since the block contains exactly two $2$-rational ordinary irreducible characters. 
On the other hand those blocks have the advantage to be nilpotent. 
We use the following well-known fact about nilpotent blocks.

\begin{lem}\label{rem5_14}
Let $B\in\Bl(G)$ be a nilpotent block with defect group 
$D$ and $Z$ a central $p$-subgroup of $G$. 
Then there is a unique block 
of $G/Z$ dominated by $B$ and this block is nilpotent.
\end{lem}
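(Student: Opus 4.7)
The plan is to produce $\bar B$ from the unique Brauer character of $B$ and then transfer the nilpotency along the quotient map. Since $Z$ is a central $p$-subgroup, it lies in every defect group of every block of $G$, so $Z\leq D$; by Theorem~\ref{thm_An_Eaton} (An-Eaton) $D$ is abelian, so $D/Z$ is a well-defined abelian $p$-group, which I expect to play the role of the defect group of $\bar B$.

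For the existence and uniqueness of $\bar B$, I would use that $B$ nilpotent forces $|\IBr(B)|=1$ by Brou\'e-Puig (see \cite[Theorem~(52.8)]{Thevenaz}); write $\IBr(B)=\{\phi\}$. Since $Z\cap G^0=\{1\}$ and central $p$-parts lift uniquely, inflation gives a natural identification $\IBr(G/Z)\cong\IBr(G)$, so $\phi$ corresponds to a unique $\bar\phi\in\IBr(G/Z)$ lying in a unique block $\bar B\in\Bl(G/Z)$. Any dominated block $\bar B'\in\Bl(G/Z)$ must satisfy $\IBr(\bar B')\subseteq\IBr(B)=\{\phi\}$, so uniqueness of the dominated block is automatic once dominance of $\bar B$ itself is established.

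To establish dominance together with nilpotency of $\bar B$, I would invoke the Brou\'e-Puig characterisation: a block is nilpotent iff $\NNN_H(Q,b_Q)/\Cent_H(Q)$ is a $p$-group for every Brauer pair $(Q,b_Q)$ of the block in the ambient group $H$. Since $Z\leq\Z(G)$, the Brauer pairs of $\bar B$ in $G/Z$ correspond to $B$-Brauer pairs in $G$ containing $Z$, and centrality of $Z$ forces the quotient $\NNN_{G/Z}(Q/Z,\bar b_Q)/\Cent_{G/Z}(Q/Z)$ to be a subquotient of $\NNN_G(Q,b_Q)/\Cent_G(Q)$. Since a subquotient of a $p$-group is a $p$-group, $\bar B$ is nilpotent with defect group $D/Z$, so $|\IBr(\bar B)|=1$, and therefore $\IBr(\bar B)=\{\bar\phi\}\subseteq\IBr(B)$, proving dominance.

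The main obstacle I foresee is the precise identification of the $\bar B$-Brauer pairs in $G/Z$ with the $B$-Brauer pairs in $G$ containing $Z$, together with the verification that the normaliser-modulo-centraliser quotients behave as a subquotient under the natural projection $G\to G/Z$. This is technical but standard once centrality of $Z$ is exploited; all remaining ingredients (the containment $Z\leq D$, the uniqueness of the Brauer character of a nilpotent block, and the inflation bijection on $\IBr$) are essentially immediate.
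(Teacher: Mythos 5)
Your overall structure parallels the paper's short proof: existence and uniqueness of the dominated block, followed by a Brauer-pair argument for its nilpotency. However, there are two points worth flagging.

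First, the appeal to Theorem~\ref{thm_An_Eaton} (An--Eaton) is illegitimate here: that theorem is about nilpotent blocks of the universal $p'$-covering group of a finite non-abelian simple group, whereas Lemma~\ref{rem5_14} is stated for an arbitrary finite group $G$, and the paper indeed applies it later (in the proof of Proposition~\ref{prop5_13}) to a normal subgroup $N$ that is not quasisimple. Fortunately the step is also unnecessary: since $Z\leq\Z(G)$ is a normal $p$-subgroup, $Z\leq O_p(G)\leq D$ and $Z$ is automatically central in $D$, so $D/Z$ is a perfectly good $p$-group without any appeal to abelianness.

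Second, your dominance argument silently replaces the definition used in the paper (every ordinary irreducible character of $\bar B$ lifts into $\Irr(B)$) by the $\IBr$-containment $\IBr(\bar B)\hookrightarrow\IBr(B)$. These are indeed equivalent when $Z$ is a normal $p$-subgroup, but that equivalence is exactly the content of the standard result the paper cites (Theorem~5.8.11 of \cite{NagaoTsushima}, or \cite[Theorem (9.9)]{Navarro}); you should either cite it or supply a one-line justification rather than treating the translation as obvious. With those repairs the argument is fine, and the Brauer-pair nilpotency transfer at the end is the same idea the paper gestures at.
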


\begin{proof}
The block $B$ dominates a unique block of $G/Z$, see
Theorem 5.8.11 of \cite{NagaoTsushima}. Following the 
definition of nilpotent blocks using Brauer pairs we see 
that this block is nilpotent as well. 
\end{proof}
This is used to prove the following statement.
\begin{prop}\label{prop5_13}
Let $b\in\Bl(N)$ be a $2$-block with non-central cyclic defect group 
$D$, and let $b'\in\Bl(\NNN_N(D))$ be the Brauer correspondent of $b$. 
Associated to the two $2$-rational characters in $\Irr(b)$
 there exist two $\Aut(N)_{b,D}$-equivariant 
bijections 
\[ \La_{b,D}: \Irr(b)\longrightarrow\Irr(b') \text{ and } 
 \La'_{b,D}: \Irr(b)\longrightarrow\Irr(b').\] 
with 
\[ \La_{b,D}(\Irr(b)\cap \Irr(N\mid \mu))  \subseteq 
\Irr(\NNN_N(D)\mid \mu)\]
and 
\[\La'_{b,D}(\Irr(b)\cap \Irr(N\mid \mu)) \subseteq 
\Irr(\NNN_N(D)\mid \mu)\text{ for every }\mu\in\Irr(\Z(N)).\]
Note that in particular the two $2$-rational ordinary irreducible characters are 
$\Aut(N)_{b,D}$-invariant.
\end{prop}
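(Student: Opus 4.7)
The plan is to exploit that for $p=2$ and cyclic $D$ the block $b$ is nilpotent by Lemma~\ref{rem5_7a}, so its Brauer correspondent $b'\in\Bl(\NNN_N(D))$ is nilpotent as well. By the theorem of Brou\'e--Puig both $\Irr(b)$ and $\Irr(b')$ are in canonical bijection with $\Irr(D)$ through the $*$-construction: writing these bijections as $\mu\mapsto\chi_\mu$ and $\mu\mapsto\chi'_\mu$, one has $\chi_\mu=\chi_1*\mu$ and $\chi'_\mu=\chi'_1*\mu$ (the $*$-construction being well-defined for every $\mu\in\Irr(D)$ since the inertial quotient of a nilpotent block is trivial). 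By Lemma~\ref{rem5_7c} the two $2$-rational characters in $\Irr(b)$ are then $\chi_1$ and $\chi_\delta$, where $\delta\in\Irr(D)$ is the unique character of order $2$. The two bijections the proposition asks for will be distinguished by which of these two is matched with $\chi'_1$.

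Concretely I would set
\[
  \La_{b,D}(\chi_\mu) := \chi'_\mu
  \quad\text{and}\quad
  \La'_{b,D}(\chi_\mu) := \chi'_{\mu\delta}
  \quad\text{for every } \mu\in\Irr(D),
\]
which are bijections since multiplication by $\delta$ permutes $\Irr(D)$. The character $\chi_1$ is the unique non-exceptional character of $b$, hence is $\Aut(N)_{b,D}$-invariant; the character $\delta$ is $\Aut(D)$-invariant as the unique order-$2$ element of $\Irr(D)$; so $\chi_\delta=\chi_1*\delta$ is $\Aut(N)_{b,D}$-invariant as well, and the same applies to $\chi'_1$ and $\chi'_\delta$, confirming the last sentence of the proposition. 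Using the equivariance of the $*$-construction under automorphisms fixing the chosen maximal Brauer pair one then obtains $\chi_\mu^\sigma=\chi_{\mu^\sigma}$ for $\sigma\in\Aut(N)_{b,D}$ (and analogously on the primed side), and together with $\delta^\sigma=\delta$ this yields the $\Aut(N)_{b,D}$-equivariance of both $\La_{b,D}$ and $\La'_{b,D}$.

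For the central character compatibility I would argue as follows. The blocks $b$ and $b'$ cover the same block of the $2'$-group $\Z(N)_{2'}$, so only the restriction to $\Z(N)_2$ requires attention. The $*$-construction shows that $\chi_\la$ lies over $\mu\in\Irr(\Z(N)_{2})$ iff $\la|_{D\cap\Z(N)}=\mu|_{D\cap\Z(N)}$, and the same holds for $\chi'_\la$, which settles $\La_{b,D}$. For $\La'_{b,D}$ the crucial input is the hypothesis that $D$ is non-central: then the cyclic group $D\cap\Z(N)$ is a proper subgroup of $D$ and therefore contained in the unique maximal subgroup of $D$, which is $\ker(\delta)$. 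Consequently $\delta|_{D\cap\Z(N)}=1$, so $\la$ and $\la\delta$ restrict identically to $D\cap\Z(N)$ and $\chi_\la$, $\chi'_{\la\delta}$ lie over the same central character.

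The step I expect to be the main obstacle is setting up the $*$-construction coherently for $b$ and $b'$ simultaneously: one must fix a maximal $b$-Brauer pair $(D,b_D)$ and work with the Brauer correspondent on the $\NNN_N(D)$-side so that the two parameterisations $\mu\mapsto\chi_\mu$ and $\mu\mapsto\chi'_\mu$ really do transport the same $\mu$'s compatibly and remain equivariant under $\Aut(N)_{b,D}$. Once this bookkeeping is in place the argument runs in parallel to that of Proposition~\ref{prop4_9}, the only genuine novelty being the twist by $\delta$ that encodes the non-uniqueness of the $2$-rational base character.
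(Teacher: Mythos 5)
The construction of the two bijections via the $*$-construction, the reduction of the central-character compatibility to the Sylow $2$-part of $\Z(N)$, and the observation that non-centrality of $D$ forces $\Z(N)\cap D\le\ker(\delta)$ all match the paper's argument. The genuine gap is your claim that ``$\chi_1$ is the unique non-exceptional character of $b$, hence is $\Aut(N)_{b,D}$-invariant.'' For $p=2$ the inertial index is $e=1$, and then the partition of $\Irr(b)$ into exceptional and non-exceptional characters is not a canonical invariant that automorphisms are forced to respect; in fact when $|D|=2$ the two characters are structurally indistinguishable, and even for $|D|\ge 4$ all of $\Irr(b)$ restricts to the same Brauer character, so the decomposition matrix does not single out $\chi_1$. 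An element of $\Aut(N)_{b,D}$ permutes the two $2$-rational characters $\{\chi_1,\chi_1*\delta\}$, and a priori it could swap them. Showing that this swap does not occur is exactly what the paper's proof has to work for: given $\sigma\in\Aut(N)_{b,D}$ with $\chi_1^\sigma\neq\chi_1$, one may take $\sigma$ to be a $2$-element, form $\widetilde N:=N\rtimes\langle\sigma\rangle$, invoke Cabanes' theorem that the unique block $\widetilde b$ of $\widetilde N$ covering $b$ is again nilpotent, and deduce from the existence of a $2$-rational height-zero character $\psi\in\Irr(\widetilde b)$ that some $2$-rational character of $b$ is $\sigma$-invariant --- contradicting the assumption that $\sigma$ swaps $\chi_1$ and $\chi_1*\delta$. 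Without this (or an equivalent) argument, the $\Aut(N)_{b,D}$-equivariance of $\La_{b,D}$ and $\La'_{b,D}$, and the final sentence of the proposition, remain unproved.

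A secondary remark: you note that aligning the $*$-constructions on the $N$- and $\NNN_N(D)$-sides is ``the main obstacle'', but on the local side $\chi'_1$ \emph{is} canonically singled out (it is the unique character of $b'$ with $D\le\ker\chi'_1$), so no issue arises there. The entire difficulty is concentrated on the global side, precisely at the step you treat as automatic.
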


\begin{proof}
Any $2$-block with cyclic defect group is nilpotent 
according to Lemma \ref{rem5_7a}. 
Hence according to \cite[Theorem (52.8)]{Thevenaz} there exists 
two $2$-rational characters in $\Irr(b)$, namely
$\chi_1$ and $\chi_1*\delta$, where $\delta$ is the character 
of $D$ of order $2$, see Lemma \ref{rem5_7c}. Further let $\chi_1'\in\Irr(b')$ be the 
canonical character of $b'$, i.e., the one with $D\leq \ker(\chi_1')$. 
Then $\La_{b,D}:\Irr(b)\longrightarrow \Irr(b')$ given by 
$\chi_1 *\nu \longmapsto \chi_1'*\nu$ for $\nu\in\Irr(D)$ is a
bijection. 

Now we consider the action of $\Aut(N)_{b,D}$ on both sets. 
Note that every $\sigma\in\Aut(N)_{b,D}$ fixes $\chi'_1$, 
and accordingly $(\chi_1*\nu)^\sigma=\chi_1^{\sigma}*(\nu^\sigma)$ according to Remark 1 of \cite{Cabanes88}. 
Since $(\chi_1)^\sigma$ is $2$-rational,  
$(\chi_1)^\sigma\in\{\chi_1,\chi_1* \delta\}$. Let 
$\sigma\in\Aut(N)_{b,D}$ with $\chi_1^\sigma\neq\chi_1$. 
Without loss of generality we can assume that $\sigma$ is a 
$2$-element. The group $\w N:=N\rtimes \langle \sigma\rangle$ 
has a unique block $\w b$ covering $b$. This block is 
nilpotent according to Theorem 2 of \cite{Cabanes87}. This implies that 
$\w b$ has a $2$-rational ordinary irreducible character $\psi$ of height zero. 
Since $\psi_N\in\Irr(b)$ we have 
$\psi_N\in\{\chi_1,\chi_1* \delta\}$. Further 
$\psi_N$ is $\sigma$-invariant. This implies that 
$\chi_1$ and $\chi_1*\delta$ are $\Aut(N)_{b,D}$-invariant. 

Accordingly the maps
\begin{align*}
\La_{b,D}: \Irr(b)\longrightarrow \Irr(b')&\text{ with }  
\chi_1*\nu \longmapsto \chi_1'*\nu \text { and }\\
\La'_{b,D}: \Irr(b)\longrightarrow \Irr(b')& \text{ with }  
\chi_1*\nu \longmapsto \chi_1'*(\nu\delta)\end{align*}
with $\nu\in\Irr(D)$ are 
$\Aut(N)_{b,D}$-equivariant by the definition of the $*$-construction.

It remains to verify the inclusion 
\[ \La_{b,D}(\Irr(b)\cap \Irr(N\mid \mu))\subseteq 
\Irr(\NNN_N(D)\mid \mu)\]
for every $\mu\in\Irr(\Z(N))$. According to  
considerations on the covered blocks of Hall 
$p'$-group of $\Z(N)$ in the proof of Proposition
\ref{prop4_9} it is sufficient to prove 
\[ \La_{b,D}(\Irr(b)\cap \Irr(N\mid \mu))
\subseteq \Irr(\NNN_N(D)\mid \mu)\]
for every $\mu\in\Irr(Z)$ where $Z$ is the 
Sylow $2$-subgroup of $\Z(N)$. 
Since $D$ is non-central, $Z\neq D$ and hence $Z\leq \ker(\delta)$.
Since $\o b\in \Bl(N/Z)$ 
the block contained in $b$ is nilpotent by Lemma
\ref{rem5_14}, and has $D/Z$ as defect group.
Since $D/Z\neq 1$ is non-trivial the set
$\Irr(\o b)$ contains two $2$-rational characters and those lift to  $\chi_1$ and $\chi_1*\delta$.
This proves $Z\leq\ker(\chi_1)$ and $Z\leq \ker(\chi_1*\delta)$.

For every $\nu\in\Irr(D)$ the set  $\Irr(N\mid \nu_Z)$ 
contains $\chi_1*\nu$ and $\chi_1*(\nu\delta)$. Since an 
analogous statement holds for exceptional characters of 
$b'$ this proves the claimed inclusions. 
 \end{proof}

\begin{cor}\label{cor5_7}
Let $N\lhd G$ and $b\in\Bl(N)$ a 
$p$-block with non-normal cyclic defect group $D$. 
Let $b'\in\Bl(\NNN_N(D))$ be the Brauer correspondent 
of $b$ and  $Z\leq \Z(N)$ such that some character of 
$\Irr(b)$ contains $Z$ in its kernel.
Then the bijections $\La_{b,D}$ and $\La'_{b,D}$ 
from Proposition \ref{prop5_13} induce the two analogously defined 
bijections  $\La_{\o b, \o D}$ and $\La'_{\o b, \o D}$ where $\o b$ 
is the unique block of $N/Z$ dominated by  $b$. 
\end{cor}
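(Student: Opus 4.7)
Since the bijections $\La_{b,D}$ and $\La'_{b,D}$ come from Proposition~\ref{prop5_13}, we are in the case $p=2$, and both $b$ and its Brauer correspondent $b'$ are nilpotent by Lemma~\ref{rem5_7a}. The central $p$-subgroup $Z$ lies in $D$, and by Lemma~\ref{rem5_14} the unique dominated block $\o b\in\Bl(N/Z)$ is nilpotent with defect group $\o D := D/Z$. Applying Brauer's First Main Theorem inside $N/Z$, the Brauer correspondent of $\o b$ is the block $\o{b'}\in\Bl(\NNN_N(D)/Z)$ dominated by $b'$, which is again nilpotent. Since $D$ is non-normal in $N$, in particular non-central, $\o D$ is non-central in $N/Z$, so Proposition~\ref{prop5_13} applies to $\o b$ and produces the bijections $\La_{\o b,\o D}$ and $\La'_{\o b,\o D}$.

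Next I would describe all four bijections explicitly in the form given by the proof of Proposition~\ref{prop5_13}. Using Notation~\ref{4_5}, one has $\Irr(b) = \{\chi_1 * \nu : \nu\in \Irr(D)\}$, where $\chi_1$ is the canonical character with $D \leq \ker\chi_1$, and
\[
\La_{b,D}(\chi_1 * \nu) = \chi_1'*\nu,\qquad
\La'_{b,D}(\chi_1 * \nu) = \chi_1'*(\nu\delta),
\]
where $\chi_1'\in\Irr(b')$ is canonical and $\delta\in\Irr(D)$ is the unique character of order $2$. Analogous formulas hold for $\La_{\o b,\o D}$ and $\La'_{\o b,\o D}$ with respect to $\o\chi_1, \o\chi_1', \o\delta$. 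Because $Z \leq D \leq \ker\chi_1\cap\ker\chi_1'$, the descents of $\chi_1,\chi_1'$ to $N/Z$ and $\NNN_N(D)/Z$ are precisely the canonical characters $\o\chi_1,\o\chi_1'$ of $\o b, \o{b'}$.

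The core step is then to verify that both bijections respect the property ``$Z$ in the kernel'' and that, under descent, they coincide with $\La_{\o b,\o D}$ and $\La'_{\o b,\o D}$. By functoriality of the Broué--Puig $*$-construction with respect to central $p$-subgroups, $Z\leq\ker(\chi_1 *\nu)$ is equivalent to $Z\leq\ker\nu$, and when this holds the descent of $\chi_1 *\nu$ to $N/Z$ equals $\o\chi_1 * \o\nu$ with $\o\nu\in\Irr(\o D)$ the descent of $\nu$; the same applies to $\chi_1'*\nu$. For $\La'_{b,D}$, note that $D$ is cyclic with $Z < D$ (since $D$ is non-normal while $Z \leq \Z(N)$), so $Z$ lies in the unique maximal subgroup $\ker\delta$ of $D$; hence $Z\leq\ker(\nu\delta)$ iff $Z\leq\ker\nu$, and the descent of $\delta$ is the unique order-$2$ character $\o\delta$ of $\o D$. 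Both bijections therefore restrict to the subsets of characters with $Z$ in the kernel, and identifying those with $\Irr(\o b)$ and $\Irr(\o{b'})$ via descent yields exactly $\La_{\o b,\o D}$ and $\La'_{\o b,\o D}$.

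I expect the main technical obstacle to be precisely the functoriality of the $*$-construction under the surjection $N\twoheadrightarrow N/Z$: one needs to fix a maximal $b$-Brauer pair $(D,b_D)$, verify that its image $(\o D,\o{b_D})$ is a maximal $\o b$-Brauer pair, and then check directly from the defining formula of~\cite{BrouePuig80} that $\chi_1*\nu$ descends to $\o\chi_1*\o\nu$ whenever $Z\leq\ker\nu$. Once this compatibility is in place, everything else reduces to bookkeeping with the explicit formulas above.
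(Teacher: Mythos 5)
Your overall strategy — write out the explicit $*$-construction formulas from the proof of Proposition~\ref{prop5_13} for both pairs of bijections and check compatibility with the central quotient — is the natural one, and the paper states Corollary~\ref{cor5_7} without a separate proof, so I am comparing against what the argument must be.

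The one genuine error is your claim that $\chi_1\in\Irr(b)$ is ``the canonical character with $D\leq\ker\chi_1$'', from which you conclude $Z\leq\ker\chi_1$. The character with $D$ in its kernel is $\chi_1'\in\Irr(b')$, where $D\lhd\NNN_N(D)$; for $\chi_1\in\Irr(b)$ the containment $D\leq\ker\chi_1$ fails in general, since $D$ is \emph{not} normal in $N$ and $\ker\chi_1\lhd N$ would have to contain $\langle D^N\rangle$ (this already fails for a nilpotent block lying over a faithful $D$-invariant character of a normal $p$-complement). What you actually need is $Z_p\leq\ker\chi_1$, and the correct argument is the counting one used in the proof of Proposition~\ref{prop5_13}: the dominated block $\o b$ is nilpotent with non-trivial cyclic defect $\o D=DZ/Z$ (non-trivial precisely because $D$ is non-normal in $N$), so by Lemma~5.7(c) $\Irr(\o b)$ contains exactly two $2$-rational characters; their inflations are $2$-rational characters of $b$, hence equal to $\{\chi_1,\chi_1*\delta\}$, which therefore have $Z_p$ in their kernels. (Relatedly, you tacitly assume $Z$ is a $p$-subgroup — e.g.\ ``$Z$ lies in $D$'', ``$\o D=D/Z$'' — but the hypothesis only gives $Z\leq\Z(N)$; write $\o D=DZ/Z$ and dispose of $Z_{p'}$ by the covered-block argument as in the proof of Proposition~\ref{prop4_9}.) With this substitution, your remaining bookkeeping with the $*$-construction formulas and the descent of $\delta$ to $\o\delta$ goes through.
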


In the following we prove an analogue of Proposition \ref{prop5_11} for $p=2$. 

\begin{prop}\label{prop5_8}
Let $N\lhd G$ with $2\nmid |G:N|$, $\w\chi\in\Irr(G)$ and $b\in\Bl(N)$ be a $2$-block with cyclic defect group $D$. Further
let $b'\in\Bl(\NNN_N(D))$ be the Brauer correspondent of $b$.  
If $\chi:=\w\chi_N\in\Irr(b)$,  
then every $\chi'\in\Irr(b)$ extends to some $\w\chi'\in\Irr(\NNN_G(D))$ with $\bl(\w\chi')^G=\bl(\w\chi)$.  
\end{prop}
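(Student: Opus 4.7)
The plan is to exploit the fact that for $p=2$ every block with cyclic defect group is nilpotent (item \ref{rem5_7a} of the preceding lemma), and reduce the problem to Lemma \ref{prop3_7} on character extensions in nilpotent blocks combined with the Brou\'e--Puig $*$-construction. The argument runs parallel to the exceptional-character step in the proof of Lemma \ref{prop5_12}.

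First, I would set up the framework. Since $\w\chi$ restricts to $\chi\in\Irr(b)$, the block $b$ is $G$-invariant, and hence $b'$ is $\NNN_G(D)$-invariant by the $\NNN_G(D)$-equivariance of Brauer correspondence. By item \ref{rem5_7a} both $b$ and $b'$ are nilpotent, each possessing a unique irreducible Brauer character $\phi\in\IBr(b)$ and $\phi'\in\IBr(b')$. Since $|G:N|$ is odd, a standard argument (as in \cite[Theorem 2]{Cabanes87}) shows that $B:=\bl(\w\chi)$ is nilpotent with the same defect group $D$, and that its unique Brauer character $\w\phi:=\w\chi^0\in\IBr(G)$ is an extension of $\phi$ to $G$.

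Next, I would apply Lemma \ref{prop3_7} to the nilpotent $G$-invariant block $b\in\Bl(N)$, the extension $\w\phi$ of $\phi$, and $\phi'\in\IBr(b')$. This yields $\w\phi'\in\IBr(\NNN_G(D))$ extending $\phi'$ with
\[
\bl\Big((\w\phi')_{\NNN_J(D)}\Big)^J = \bl(\w\phi_J) \quad \text{for every $J$ with $N\leq J\leq G$},
\]
so in particular $\bl(\w\phi')^G=\bl(\w\chi)$. Set $B':=\bl(\w\phi')\in\Bl(\NNN_G(D))$; this block covers $b'$ and, by the same nilpotency argument applied to the odd-order quotient $\NNN_G(D)/\NNN_N(D)$, it is itself nilpotent with defect $D$.

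Finally, I lift this Brauer-character extension to ordinary characters. The canonical character $\chi_1'\in\Irr(b')$ with $D\leq\ker\chi_1'$ is $\NNN_G(D)$-invariant by Proposition \ref{prop5_13}, and the nilpotency of $B'$ together with the uniqueness of its Brauer character $\w\phi'$ shows that $\chi_1'$ lifts to a unique $\w\chi_1'\in\Irr(B')$ with $(\w\chi_1')^0 = \w\phi'$. For an arbitrary $\chi'\in\Irr(b')$, write $\chi'=\chi_1'*\nu$ for some $\nu\in\Irr(D)$; combining $\w\chi_1'$ with an appropriate lift $\w\nu\in\Irr(\w D)$ of $\nu$ along a defect group $\w D\supseteq D$ of $B'$, the $*$-construction of \cite{BrouePuig80} produces $\w\chi':=\w\chi_1'*\w\nu\in\Irr(\NNN_G(D)_{\chi'})$, which is an extension of $\chi_1'*\nu$ by \cite[Theorem 1(ii)]{Cabanes87}.

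The main obstacle will be verifying the block identity $\bl(\w\chi')^G=\bl(\w\chi)$ for the ordinary extension thus obtained, since the $*$-construction produces only equalities of generalized characters. This is handled by combining the Brauer-character block identity from Lemma \ref{prop3_7} with the block-preserving property of the $*$-construction, then propagating the identity from Brauer characters to their ordinary lifts via Lemmas 2.4 and 2.5(a) of \cite{KoshitaniSpaeth1}, exactly as in the final paragraph of the proof of Lemma \ref{prop3_7} above.
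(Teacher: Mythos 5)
Your overall strategy — reduce to nilpotency and then lift from Brauer to ordinary characters — is a reasonable-sounding alternative route, but the final lifting step has a genuine gap, and several of your citations are off for the index being a $2'$-number rather than a $2$-power.

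The paper argues instead as follows: it shows $G=N\Cent_G(D)$, establishes that $B:=\bl(\w\chi)$ and its Brauer correspondent $B'$ are nilpotent (citing Lemma~6.2 of \cite{Spaeth_AM_red}, not \cite[Theorem 2]{Cabanes87} — the latter is for $p$-group extensions, whereas here $|G:N|$ is odd), checks via Green correspondence (Lemma~\ref{prop4_2b}) that \emph{some} character of $b'$ extends into $B'$, computes $\NNN_G(D)=\NNN_G(D)[b']$ through the Dade/Murai bilinear form, and then invokes \cite[Theorem 4.1(ii)]{Murai_Dade} — a theorem built precisely for normal subgroups of $p'$-index — to conclude that \emph{every} character of $b'$ extends into $B'$. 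The block condition then follows for free because $B'$ is the Harris--Kn{\"o}rr correspondent of $B$.

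Your step from the Brauer character extension $\w\phi'$ to ordinary characters is where things break down. First, the claim that $\chi_1'$ lifts to a \emph{unique} $\w\chi_1'\in\Irr(B')$ with $(\w\chi_1')^0=\w\phi'$ is not well-posed: since $B'$ is nilpotent with a cyclic defect group, \emph{every} character of $B'$ has restriction to $2$-regular classes equal to $\w\phi'$, so this condition does not pick out a unique character. You presumably mean the canonical character of $B'$, but then you need to argue separately that it restricts to the canonical character $\chi_1'$ of $b'$; this is not automatic (a priori the restriction could be $e\chi_1'$ for some $e>1$). Second, and more fundamentally, the appeal to \cite[Theorem 1(ii)]{Cabanes87} to conclude that $\w\chi_1'*\w\nu$ restricts to $\chi_1'*\nu$ is inapplicable: that theorem concerns block extensions by $p$-groups, whereas $\NNN_G(D)/\NNN_N(D)$ here has odd order. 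Compatibility of the $*$-construction with restriction along a normal subgroup of $p'$-index is exactly the sort of thing that must be argued, and it is this missing lemma that the paper replaces by the combination $\NNN_G(D)=\NNN_G(D)[b']$ plus \cite[Theorem 4.1(ii)]{Murai_Dade}. Your last paragraph, on propagating the block identity via Lemmas 2.4 and 2.5(a) of \cite{KoshitaniSpaeth1}, is also unnecessary: once $\w\chi'\in\Irr(B')$ with $B'$ the Harris--Kn{\"o}rr correspondent of $B$, the equality $\bl(\w\chi')^G=\bl(\w\chi)$ is immediate.
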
 

\begin{proof}
Since $\chi$ and $b$ are $G$-invariant $G=N\NNN_G(D)$. Note that  
$\NNN_G(D)/\Cent_G(D)$ is a subgroup of the $2$-group $\Aut(D)$, 
and hence $G=N\Cent_G(D)$. 
Analogously we see that $G=N \Cent_G( D)_{b'}$. 
Let $\phi_0\in\IBr(\Cent_N(D))$ such that $\bl(\phi_0)^{\NNN_N(D)}=b'$.

First we show that some character of $b'$ has an extension in $B'\in\Bl(\NNN_G(D))$
the Harris-Kn\"orr corresponding block of $B$.

Lemma 6.2 of \cite{Spaeth_AM_red} shows that $B:=\bl(\w\chi)$ 
is nilpotent. Then $B'$ is nilpotent as well. 
Since $\chi$ extends to $G$, 
the corresponding Brauer character $\chi^0$ is irreducible and 
extends to $G$. By Lemma \ref{prop4_2b} 
the Brauer character $\phi$ of $b'$ extends to some $\w\phi\in\IBr(B')$ as well. 
As $B'$ is nilpotent, $\w\phi=\psi^0$ for some $\psi\in\Irr(B')$ and $\psi_{\NNN_N(D)}\in\Irr(b')$.

In the next step we prove $\NNN_G(D)=\NNN_G(D)[b']$. 
Let $\omega$ be the bilinear form on 
$\NNN_N(D)_{\phi_0}\times \Cent_G(D)_{\phi_0}$ defined in Section 3 
of \cite{Murai_Dade}. 
This bilinear form is invariant in 
$\Cent_G(D)_{\phi_0}$ since $\NNN_N(D)_{\phi_0}=\Cent_N(D)$. 
Now by \cite[Corollary 12.6]{DadeBlockExtensions} (see also \cite[Theorem 3.13]{Murai_Dade})
we obtain
\begin{align*}
\NNN_G(D)=\NNN_N(D) \Cent_G(D)_{\phi_0}=\NNN_N(D) \Cent_G(D)_\omega
= \NNN_G(D) [b'].
\end{align*}
This implies $G[b]=G$ as well. 
We can apply Theorem 4.1(ii) of \cite{Murai_Dade}. 
It implies that every  $\chi'\in\Irr(b')$ 
extends to some character in $B'$, 
and all irreducible ordinary characters of $b$ have an extension in $B$.
\end{proof} 

Only a light version of the above statement holds when $G/N$ is 
a $2$-group.

\begin{prop}\label{prop5_17}
Let $N\lhd G$, $\w\chi\in\Irr(G)$ and let $b\in\Bl(N)$ be a $2$-block 
with cyclic defect group $D$. 
Assume that $G/N$ is a $2$-group. 
Let $\La_{b,D}$ and $\La_{b,D}'$ be the bijections from
Proposition \ref{prop5_13}. If $\chi:=\w\chi_N\in\Irr(b)$  
is not $2$-rational, then 
$\La_{b,D}(\chi)$ and $\La_{b,D}'(\chi)$ 
extend to $\NNN_G(D)$.
\end{prop}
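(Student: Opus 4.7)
Since $p=2$ and $D$ is cyclic, $b$ is nilpotent by Lemma~\ref{rem5_7a}, and Brou\'e--Puig theory gives $\Irr(b)=\{\chi_1*\mu:\mu\in\Irr(D)\}$ where $\chi_1\in\Irr(b)$ is the canonical character. Write $\chi=\chi_1*\nu$; the non-$2$-rationality of $\chi$ forces $\nu\notin\{1_D,\delta\}$ (by Lemma~\ref{rem5_7c}). The two targets to be extended to $L:=\NNN_G(D)$ are $\La_{b,D}(\chi)=\chi_1'*\nu$ and $\La_{b,D}'(\chi)=\chi_1'*(\nu\delta)$, where $\chi_1'$ is the canonical character of the (nilpotent) Brauer correspondent $b'\in\Bl(M)$, $M:=\NNN_N(D)$.

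Set $B:=\bl(\w\chi)\in\Bl(G)$; the $G$-invariance of $\chi$ makes $b$ $G$-invariant, so by Frattini $G=NL$ and $L/M\cong G/N$ is a $2$-group. By \cite[Theorem~2]{Cabanes87}, both $B$ and its Harris--Kn\"orr correspondent $B'\in\Bl(L)$ are nilpotent. Fix a defect group $\w D$ of $B$ with $D\leq\w D$; then $D=\w D\cap N\lhd\w D$, whence $\w D\leq L$, and $\w D$ is also a defect group of $B'$. In the Brou\'e--Puig parametrization of $\Irr(B)$, decompose $\w\chi=\w\chi_1*\w\nu$ with $\w\chi_1$ a $2$-rational canonical character of $B$ and $\w\nu\in\Irr(\w D)$ satisfying $\w\nu_D=\nu$. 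A degree count — all characters of $b$ have degree $\chi_1(1)$ and $(\w\chi_1)_N$ is a sum of such — forces $\w\chi_1(1)=\chi_1(1)$ and $(\w\chi_1)_N$ irreducible, hence $(\w\chi_1)_N\in\{\chi_1,\chi_1*\delta\}$. Replacing $\w\chi_1$ by $\w\chi_1*\w\delta$ if necessary — where $\w\delta\in\Irr(\w D)$ extends the order-$2$ character $\delta$ of $D$, possible since $\ker\delta$ is the unique index-$2$ subgroup of the cyclic group $D$ and hence is normal in $\w D$ — we may assume $(\w\chi_1)_N=\chi_1$; in particular $\chi_1$ extends to $G$ via $\w\chi_1\in\Irr(B)$.

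The key step is to produce a parallel $2$-rational canonical character $\w\chi_1'\in\Irr(B')$ with $(\w\chi_1')_M=\chi_1'$. By the Brou\'e--Puig theory of nilpotent blocks, the Brauer-corresponding pairs $(b,b')$ and $(B,B')$ admit compatible source-algebra identifications, so that the cohomological obstruction to extending $\chi_1$ from $N$ to $G$ within $B$ coincides with the obstruction to extending $\chi_1'$ from $M$ to $L$ within $B'$ under the identification $L/M\cong G/N$. Since $\chi_1$ does extend by the previous step, the parallel obstruction vanishes, yielding the desired $\w\chi_1'\in\Irr(B')$; a further twist by $\w\delta$ if necessary arranges $(\w\chi_1')_M=\chi_1'$.

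Finally, \cite[Theorem~1(ii)]{Cabanes87} gives $(\w\chi_1'*\w\nu)_M=\chi_1'*\nu=\La_{b,D}(\chi)$ and $(\w\chi_1'*(\w\nu\w\delta))_M=\chi_1'*(\nu\delta)=\La_{b,D}'(\chi)$, extending both target characters to $L$. The hard part is the third paragraph: for odd $p$ the analogous step in Lemma~\ref{prop5_12} is handled by Isaacs' extension result for $p$-rational $G$-invariant characters under solvable quotients — unavailable for $p=2$ — and its role here must be played by the rigidity of source-algebra data of nilpotent blocks under Brauer correspondence.
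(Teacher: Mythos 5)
Your overall architecture mirrors the paper's: identify the Brou\'e--Puig decomposition $\chi=\chi_1*\nu$, reduce to extending the canonical character $\chi_1'$ of the Brauer correspondent $b'$, and then transport the extension to $\chi_1'*\nu$ and $\chi_1'*(\nu\delta)$ via Cabanes. But your third paragraph --- which you yourself flag as ``the hard part'' --- is a genuine gap. You assert that ``the cohomological obstruction to extending $\chi_1$ from $N$ to $G$ within $B$ coincides with the obstruction to extending $\chi_1'$ from $M$ to $L$ within $B'$ under the identification $L/M\cong G/N$'' by appeal to ``compatible source-algebra identifications'' for Brauer-corresponding nilpotent blocks. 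No such identification of Clifford-theoretic obstruction classes across the Brauer correspondence is established in the literature you could cite here, and the K\"ulshammer--Puig structure theorem for extensions of nilpotent blocks does not deliver this statement off the shelf; it would itself require a careful argument. As written this is an assertion, not a proof.

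The paper sidesteps the issue entirely with an elementary observation you missed: $\chi_1'$ contains $D$ in its kernel, hence inflates from a \emph{defect zero} character of $M/D$, and $L/M\cong (L/D)/(M/D)$ is a $2$-group. An invariant defect zero character of a normal subgroup with $p$-group quotient always extends (this is \cite[Exercise (3.10)]{Navarro}); invariance holds because $\chi_1'$ is the unique canonical character of the $L$-invariant block $b'$. This gives $\w\chi_1'\in\Irr(L)$ directly, with no appeal to cohomology or source algebras. The paper also handles the second target more cleanly: since $D$ is cyclic and $\nu\neq 1$, one has $\nu\delta=\nu^i$ for some integer $i$, so $\w\nu^i$ extends $\nu\delta$ and $\w\chi_1'*\w\nu^i$ extends $\chi_1'*(\nu\delta)$; this avoids having to produce an extension $\w\delta$ of $\delta$ to $\w D$ --- which, incidentally, your argument also leaves unjustified, since invariance of $\delta$ under $\w D$ does not by itself guarantee extendibility when $\w D/D$ is a $2$-group.
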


\begin{proof} 
Since $G/N$ is a $2$-group, the block $B :=\bl(\w \chi)$ 
is nilpotent by Theorem 2 of \cite{Cabanes87}. Let $\kappa$ be a $2$-rational 
in $\Irr(B)$ of height zero. Let $\w D$ be a 
defect group of $\w b$ containing $D$. Then there exists 
some $\w \nu\in\Irr( \w D)$ with $\w\chi=\kappa*\w \nu$. 
Since $\kappa_N$ is $2$-rational, $\kappa_N\in\{\chi_1,\chi_1*\delta\}$  and $\chi\in \{\chi_1*\nu, \chi_1*\nu\delta\}$, where $\chi_1$ is a $2$-rational character of 
$b$ and $\nu:=\w\nu_Z\in\Irr(D)$. 

By the definitions of $\La_{b,D}$ and $\La'_{b,D}$ we see
$\La_{b,D}(\chi), \La'_{b,D}(\chi) \in 
\{ \chi_1' *\nu, \chi_1'*(\nu\delta) \}$, 
where $\chi_1'$ is defined as in the proof of 
Proposition  \ref{prop5_13}. According to \cite[Exercise (3.10)]{Navarro}, $\chi'_1$ has an extension $\w\chi_1'\in \Irr(\NNN_G(D))$, since $\NNN_G(D)/\NNN_N(D)$ is a 
$2$-group and $\chi'_1$ can be seen as a defect 
zero character of $\NNN_N(D)/D$.

Accordingly for the proof of the statement we have to prove
that $\chi'_1*\nu$ and $\chi'_1*(\nu\delta)$ extend to $\NNN_G(D)$.
The character $\w\chi_1'*\w\nu$ is an extension of $\chi'_1*\nu$, 
see Theorem 1(ii) of \cite{Cabanes87}. 
Since $D$ is cyclic and $\nu$ non-trivial,  $\nu\delta=\nu^i$ for some
integer $i$. Hence the character $\w\nu^i$ is an extension of $\nu\delta$ and $\w\chi_1'*(\w\nu^i)$ is an extension of $\chi'_1*(\nu\delta)$.
This implies that $\chi'_1*(\nu\delta)$ extends to 
$\NNN_G(D)$. This finishes the proof. 
\end{proof}

\section{The inductive Alperin-McKay condition 
for blocks with cyclic defect groups  
}\label{sec6}

The inductive Alperin-McKay condition (or AM condition, for short) from Definition 7.2 of 
\cite{Spaeth_AM_red} can be seen as a set of properties 
satisfied for all $p$-blocks. A relative version with 
respect to $p$-groups has been introduced in 7.1 of \cite{CS12}. 
We refine this further to a condition on $p$-blocks. 
As before the inductive AM condition holds for a 
finite non-abelian simple group $S$ if it holds for $S$ 
with respect to all finite $p$-groups, and the inductive 
AM condition holds for $S$ with respect to a defect 
groups if it holds for all $p$-blocks with this specific 
defect group. It brings forth a successive approach to the 
inductive AM condition. We start by giving a blockwise version 
of the inductive AM condition.

\begin{notation}
For a $p$-block $B$ we denote by $\Irr_0(B)$ the set of 
height zero characters in $\Irr(B)$. 

Note that for a finite non-abelian simple group $S$, its 
universal covering group $G$ and its universal $p'$-covering group
the associated automorphism groups can be identified, see \cite[Corollary 5.1.4(c)]{GLS3}. 
Further for $Z\leq \Z(G)$ there is a natural embedding
of $\Aut(G/Z)$ into $\Aut(S)=\Aut(G)$. 
So it makes sense to denote by $\Aut(S)_\chi$ the stabilizer of 
$\chi$ in $\Aut(S)$ for any character $\chi\in\Irr(G)$. 
\end{notation}

In the following we state a blockwise version of 
the inductive AM condition from Definition 7.2 of  
\cite{Spaeth_AM_red}.

\begin{defi}\label{def_ind_AM_block}
Let $S$ be a finite non-abelian simple group, $G$ its universal 
covering group and $B\in \Bl(G)$ with defect group $D$. 
We say that {\it the inductive AM condition holds for $B$}, 
if the following statements hold: 
\renewcommand{\labelenumi}{(\roman{enumi})}
\renewcommand{\theenumi}{\thesubsection(\roman{enumi})}
\renewcommand{\labelenumii}{(\arabic{enumii})}
\renewcommand{\theenumii}{(\arabic{enumii})}
\begin{enumerate}
\item There exists an    
$\Aut(G)_{B,D}$-stable group $M$ 
with $\NNN_G(D)\leq M\lneq G$.
\item Let $B'\in\Bl(M)$ with $(B')^G = B$. 
There exists an $\Aut(G)_{B,D}$-equivariant bijection 
\[ \Lambda_{{B,D}}^G: \Irr_0(B)\longrightarrow  \Irr_0(B'), \]
such that 
\[ \Lambda_{B,D}^G(\Irr_0(B)\cap \Irr(G\mid \nu))
\subseteq \Irr(M\mid \nu) \text{ for every } \nu\in\Irr(\Z(G)).\]
\item \label{def6_2iii}
For every $\chi\in\Irr_0(B)$ there exists a finite group 
$A:=A(\chi)$ and characters $\w\chi$ and $\w\chi'$  
such that
\begin{enumerate}
 \item For $Z:= \ker(\chi)\cap \Z(G)$ and 
$\o G:=G/Z$ the group $A$ satisfies $\o G\lhd A$, 
$A/\Cent_A(\o G)=\Aut(G)_\chi$ and $\Cent_A(\o G) = \Z(A)$.
(More precisely $A/\Cent_A(\o G)=\Aut(G)_\chi$ makes sense since  $A/\Cent_A(\o G)$ can be identified with a subgroup of $\Aut(\o G)$ which is a subgroup of $\Aut(S)=\Aut(G)$.)
\item $\w\chi\in\Irr(A)$ is an extension of 
the character $\o\chi\in\Irr( \o G)$, where $\o\chi$ is the 
character that lifts to $\chi$.
\item  $\w\chi'\in\Irr(\o M\NNN_A(\o D))$ is                 
an extension of $\o\chi'$, where 
for $\o D:= DZ/Z$ and $\o M := M/Z$                   

the character $\o\chi'\in\Irr(\o M)$ 
is the one that lifts to      
$\chi' := \Lambda_{B,D}^G(\chi)\in\Irr_0(B')$.        
\item The characters satisfy 
\[ \Irr(\Cent_A(\o G) \mid \w\chi) =      
\Irr(\Cent_A(\o G) \mid \w\chi')\]        
and
\[ \bl(\w\chi_J) = \bl\Big( (\w\chi')_{\o M\NNN_J(\o D)}\Big)^J 
\text{ for every } J \text{ with }\o G\leq J\leq A\]
\end{enumerate}
\end{enumerate} If in the above situation the bijection $\La_{B,D}^G$ satisfies 
\[ \La_{B,D}^G(\chi)(1)_{p'}
\equiv \pm |G:\NNN_G(D)|_{p'} \,\, \chi(1)_{p'}      \mod p     
\quad \text{ for every }\chi\in\Irr(B),\]          
then we say that the {\it Isaacs-Navarro-refinement (or IN-refinement, for short) 
of the inductive AM condition holds for} $B$,
see \cite{IsaacsNavarro} and
\cite[Definitions 7.2 and 7.6]{Spaeth_AM_red}.
\end{defi}

For every $\chi\in\Irr_0(B)$ we construct, in the following, 
a finite group $A$ such that it satisfies the first two requirements of 
Part (iii) of \ref{def_ind_AM_block} are satisfied.

\begin{lem}\label{rem_exist_A_ord}
Let $p$ be a prime, $S$ a finite non-abelian simple group 
and $G$ the universal $p'$-covering group of $S$. 
Let $\chi\in\Irr(G)$ and $\o G := G/(\ker(\chi)\cap\Z(G))$. 
Then there exists a finite group $A$ with 
\renewcommand{\labelenumi}{(\roman{enumi})}
\renewcommand{\theenumi}{\thesubsection(\roman{enumi})}
\renewcommand{\labelenumii}{(\arabic{enumii})}
\renewcommand{\theenumii}{(\arabic{enumii})}
\begin{enumerate}
\item[(a)]  $\o G\lhd A$, $A/\Cent_{A}(\o G)= \Aut(G)_\chi$ 
and $\Cent_A(\o G)=\Z(A)$.
\item[(b)] The character $\o\chi\in\Irr(\o G)$ associated 
to $\chi$ extends to $A$.
\end{enumerate}
\end{lem}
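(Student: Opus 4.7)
The plan is to follow essentially the same strategy as in Lemma \ref{rem_exist_A}, but with $\CC$-representations instead of $k$-representations; in fact the construction is simpler here because no $p'$-condition on $\Z(A)$ is demanded. First I would set $Z:=\ker(\chi)\cap\Z(G)$ and let $\o\chi\in\Irr(\o G)$ be the character lifting to $\chi$. Note that $\Aut(G)_\chi$ stabilizes $Z$ and therefore acts on $\o G$, and $\o\chi$ is $\Aut(G)_\chi$-invariant, so one can speak of extending $\o\chi$ up to the outer automorphism group.

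Next I would fix a $\CC$-representation $\calD$ of $\o G$ affording $\o\chi$, choose a transversal $\TT$ of $\Inn(G)$ in $\Aut(G)_\chi$, and choose a set-theoretic section $\rep:\Inn(G)\to\o G$ with $\rep(1)=1$. For each $t\in\TT$, since $\o\chi$ is $t$-invariant, $\calD$ extends to a representation $\w\calD$ of $\o G\rtimes\spann<t>$, and I set $\calP(t):=\w\calD(t)$; then I extend to a projective representation $\calP:\Aut(G)_\chi\to\GL_{\o\chi(1)}(\CC)$ via $\calP(it)=\calD(\rep(i))\calP(t)$ for $i\in\Inn(G)$ and $t\in\TT$. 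Let $\alpha$ denote the factor set of $\calP$; because $\Aut(G)_\chi$ is finite, the values of $\alpha$ are finite roots of unity, so the subgroup $C\leq\CC^\times$ they generate is finite cyclic.

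Using $\alpha$ I would then form the central extension $A$ of $\Aut(G)_\chi$ by $C$ exactly as in the proof of Lemma \ref{rem_exist_A} (following Theorem (8.28) of \cite{Navarro}): the elements of $A$ are pairs $(a,c)$ with multiplication $(a_1,c_1)(a_2,c_2)=(a_1a_2,\alpha(a_1,a_2)c_1c_2)$. Exactly as there, $\o G$ embeds as a normal subgroup of $A$ via $\rep(g)z\mapsto(g,z)$ where $\nu:\Z(\o G)\to C$ is determined by $\calD(z)=\nu(z)\cdot\mathrm{id}$, and $\Cent_A(\o G)=C=\Z(A)$, while $A/\Cent_A(\o G)\cong\Aut(G)_\chi$. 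This gives part (a).

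Finally, for (b), I would lift $\calP$ to an honest representation $\calQ$ of $A$ by setting $\calQ(a,c)=c\cdot\calP(a)$; the multiplicativity of $\calQ$ is a direct consequence of the definition of $A$ via the factor set $\alpha$, and $\calQ$ restricted to the image of $\o G$ is $\calD$. Thus the ordinary character afforded by $\calQ$ is an extension of $\o\chi$ to $A$. The main — and only — step that requires care is checking that $\calP$ is genuinely a projective representation and that $\alpha$ is a cocycle, but this is a direct calculation using the definitions, identical to the one already carried out in the proof of Lemma \ref{rem_exist_A}.
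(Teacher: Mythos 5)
Your proposal is correct and is exactly what the paper intends: the paper's proof of Lemma \ref{rem_exist_A_ord} is just the one-line remark that the construction of Lemma \ref{rem_exist_A} transfers verbatim, and you have spelled out precisely how, replacing $k$-representations by $\CC$-representations (and correctly noting that the $p'$-condition on $\Z(A)$ is simply dropped since it is not asserted here). The only ingredient worth flagging explicitly is that the existence of the extension $\w\calD$ of $\calD$ to $\o G\rtimes\spann<t>$ uses that $\spann<t>$ is cyclic and $\o\chi$ is $t$-invariant, the ordinary-character analogue of Theorem (8.16) of \cite{Navarro} being e.g.\ Corollary (11.22) of \cite{Isa}.
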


\begin{proof}
The construction given in Lemma \ref{rem_exist_A} 
can easily be transferred to this situation, 
and we obtain $A$ using the same method.  
\end{proof}

In the remaining section we prove the first part of Theorem \ref{thm1_1}.
In order to be able to apply some considerations 
in future work we separate the statements that can be applied in general
from those that are specific to blocks with cyclic defect groups. 
The following statement is an analogue of Lemma \ref{lem4_2} 
for ordinary characters.

\begin{lem}\label{lem6_4_ord}
Let $N\lhd G$, $L\lhd G$ with $N\leq L$ and 
$\w\chi\in\Irr(L)$ with $\chi:=\w\chi_N\in\Irr(N)$. 
Assume that $\w\chi$ is $G$-invariant and that  
for every prime $q$ there exists an extension 
$\psi_q$ of $\chi$ to some $K_q$ with 
$(\psi_q)_{K_q\cap L}=(\w\chi)_{K_q\cap L}$, 
where $K_q$ satisfies  $N\leq K_q\leq G$ and $K_q/N\in \Syl_q(G/N)$.
Then $\w\chi$ extends to $G$. 
\end{lem}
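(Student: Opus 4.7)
The proof plan will mirror that of Lemma \ref{lem4_2}, substituting ordinary-character results from \cite{Isa} for the Brauer-character results from \cite{Navarro} that were used there. The strategy splits in two phases: reduce the extension problem from $G$ to extensions over each Sylow $q$-subgroup of $G/L$, and then build those local extensions from the compatible data provided by the hypothesis.

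For the reduction, I would first observe that $L\lhd G$ yields the natural isomorphism $LK_q/L\cong K_q/(K_q\cap L)$, identifying $LK_q/L$ with a Sylow $q$-subgroup of $G/L$. Consequently Isaacs' \cite[Corollary (11.31)]{Isa} (the ordinary-character analog of \cite[Theorem (8.29)]{Navarro}) reduces the statement to showing that $\w\chi$ extends to $LK_q$ for every prime $q$.

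To build that extension for a fixed $q$, I would fix representations $\calD$ and $\calQ_q$ affording $\w\chi$ and $\psi_q$ respectively, and use the hypothesis $(\psi_q)_{K_q\cap L}=(\w\chi)_{K_q\cap L}$ together with a change of basis to arrange $\calD(y)=\calQ_q(y)$ for all $y\in K_q\cap L$. Next, for each $x\in K_q$, the $G$-invariance of $\w\chi$ and the cyclicity of $\spann<L,x>/L$ permit an appeal to \cite[Corollary (11.22)]{Isa}, giving an extension $\w\calD$ of $\calD$ to $\spann<L,x>$; comparing $\w\calD$ with $\calQ_q$ on $\spann<L\cap K_q,x>$ via Gallagher's theorem (the ordinary-character counterpart of \cite[Theorem (8.16)]{Navarro}) yields $\w\calD(x)=\zeta\,\calQ_q(x)$ for some $\zeta\in\CC^\times$.

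This scalar identity gives the conjugation relation $\calD(l)^{\calQ_q(x)}=\calD(l^x)$ for all $l\in L$ and $x\in K_q$, so $\calD_q(lk):=\calD(l)\calQ_q(k)$ is a well-defined representation of $LK_q$ extending $\calD$ and affording an extension of $\w\chi$ to $LK_q$. The trickiest step I anticipate is securing the scalar identity in the previous paragraph, since one must verify that the representations $\w\calD$ and $\calQ_q$ (which agree on the normal subgroup $L\cap K_q$) differ on the cyclic quotient by an actual linear character; this is where the ordinary and Brauer settings require parallel but distinct arguments. Once that is in place, the remaining bookkeeping follows the pattern of Lemma \ref{lem4_2}.
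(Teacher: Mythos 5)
Your proposal is correct and matches the paper's approach exactly: the paper's proof of this lemma is the single sentence ``The considerations of Lemma \ref{lem4_2} also apply for ordinary characters,'' and your argument is a faithful, detailed transcription of that translation (Isaacs Corollary (11.31) for Navarro (8.29), Corollary (11.22) for the cyclic extension, and the conjugation/scalar comparison in place of Navarro (8.16)), with the correct observation that all primes $q$ are treated uniformly here since ordinary characters, unlike Brauer characters, do not extend automatically over a $p$-quotient.
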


\begin{proof} The considerations of Lemma 
\ref{lem4_2} also apply  for ordinary characters.
\end{proof}

Like in the previous section we construct 
an extension by using extensions to certain groups 
related to Sylow $q$-subgroups for primes $q \not= p$.

\begin{prop}\label{prop6_5}
Let $N \lhd G$ and  $\w\chi\in\Irr(G)$         
a character with $\chi:=\w\chi_{N} \in \Irr(N)$. Let $M\leq N$ be an 
$\NNN_G(D)$-invariant subgroup with $\NNN_N(D)\leq M$ for some 
defect group $D$ of $\bl(\chi)$. 
Suppose there exists some character $\chi'\in\Irr(M)$    
with $\bl(\chi')^N=\bl(\chi)$. For every       
prime $q$ let $G_q$ be a group such  that $N\leq G_q\leq G$ and 
$G_q/N\in \Syl_q(G/N)$.  Further let $H:=M\NNN_G(D)$ and 
$H_q:=G_q\cap H$. Let $b:=\bl(\chi)$ and $L:=G[b]$.    
Assume further that $\chi'$ has           
the following properties: 
\renewcommand{\labelenumi}{(\roman{enumi})}
\renewcommand{\theenumi}{\thesubsection(\roman{enumi})}
\begin{enumerate}
	\item For every prime $q\neq p$ there exists 
some extension $\kappa_q\in\Irr(H_q)$ of $\chi'$ such that       
	\[ \bl((\kappa_q)_{L\cap H_q})^{L\cap G_q}
           = \bl((\w \chi)_{L\cap G_q}).\]                       
	\item For $\nu\in \Irr(\Z(G)\mid \w \chi)$ there exists some extension 
$\kappa_p\in\Irr(H_p\mid \nu_{\Z(G)\cap H_p})$ of $\chi'$.       
\end{enumerate}
Then there exists some extension $\w\chi'\in\Irr(H\mid \nu)$     
of $\chi'$ such that 
\[ \bl(\w\chi'_{J\cap H})^J=\bl(\w\chi_J)\text{ for every }  
J \text{ with } N\leq J\leq G.\]
\end{prop}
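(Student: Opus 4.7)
\noindent\textit{Proof plan.} The plan is to produce $\w\chi'$ in three steps---first extending $\chi'$ to $H$, then pinning down its central character, and finally verifying the block identity---then pass from cyclic subgroups to arbitrary $J$.

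First I would show that $\chi'$ is $H$-invariant and admits some extension $\w\chi'_0\in\Irr(H)$. Since $M$ is $\NNN_G(D)$-stable and contains $\NNN_N(D)$, a direct computation gives $H\cap N=M$; consequently $M\lhd H=M\NNN_G(D)$ and the inclusion induces an embedding $H/M\hookrightarrow G/N$. After a suitable choice of $G_q$ within its conjugacy class in $G/N$, we may assume $H_q/M\in\Syl_q(H/M)$ for every prime $q$. Hypotheses (i) and (ii) furnish extensions $\kappa_q\in\Irr(H_q)$ of $\chi'$ for every $q$, so $\chi'$ is stabilized by each $H_q$ and hence, by Lagrange, by the full group $H$. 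Applying Lemma~\ref{lem6_4_ord} with $N$, $L$, $G$, $K_q$ there replaced by $M$, $M$, $H$, $H_q$ (the compatibility $(\kappa_q)_{H_q\cap M}=\chi'$ is automatic because $H_q\cap M=M$) produces the desired extension $\w\chi'_0$.

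Next I would adjust $\w\chi'_0$ to lie over $\nu$. Any other extension of $\chi'$ to $H$ has the form $\w\chi'_0\cdot\mu$ for a linear character $\mu\in\Irr(H/M)$, and $\mu|_{\Z(G)}$ ranges freely over $\Irr(\Z(G)/(\Z(G)\cap M))$. Hypothesis (ii) prescribes the correct restriction on $\Z(G)\cap H_p$, while for $q\neq p$ the block identity of (i) together with $\w\chi|_{\Z(G)}=\nu^{\w\chi(1)/\nu(1)}$ forces $\kappa_q|_{\Z(G)\cap H_q}$ to be the appropriate multiple of $\nu|_{\Z(G)\cap H_q}$; these data determine $\mu$ so that $\w\chi':=\w\chi'_0\cdot\mu\in\Irr(H\mid\nu)$.

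Finally I would verify the block identity $\bl(\w\chi'_{J\cap H})^J=\bl(\w\chi_J)$ for every $J$ with $N\leq J\leq G$, following the template of the proof of Lemma~\ref{prop3_7}. Hypothesis (i) together with Lemma~2.4 of \cite{KoshitaniSpaeth1} supplies the identity on $\spann<M,x>$ and $\spann<N,x>$ for every $x\in L^0$; Murai's Theorem~3.5 of \cite{Murai_Dade}, applied at the Brauer correspondent using $L=G[b]$, extends this to every $x\in H^0$; and Lemma~2.5(a) of \cite{KoshitaniSpaeth1} promotes the cyclic-subgroup statement to the required equality for all $J$.

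The main obstacle will be to reconcile the adjustment in the second step with the block identities of the first: I must check that multiplication by $\mu$ preserves the block-induction identities for every prime $q\neq p$. Exploiting that $b$ is $G$-invariant and that $H/M$ embeds into $G/N$ should keep this a controlled computation, but this is the technical heart of the argument.
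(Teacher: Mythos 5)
The first two steps of your plan are sound: $H\cap N=M$ holds because $M$ contains $\NNN_N(D)$ and is $\NNN_G(D)$-stable, the stabilizer argument via the $H_q$'s shows $\chi'$ is $H$-invariant, and an extension $\w\chi'_0$ to $H$ does exist via Lemma~\ref{lem6_4_ord}.  Adjusting by a linear character $\mu\in\Irr(H/M)$ to land in $\Irr(H\mid\nu)$ is also legitimate.  But the third step—the block identity—is where the actual content lies, and your sketch does not supply it.

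The difficulty is the following.  Hypothesis~(i) gives you, for each prime $q\neq p$, a \emph{separate} extension $\kappa_q$ on $H_q$ satisfying the block condition on $L\cap G_q$.  Your chosen extension $\w\chi'=\w\chi'_0\cdot\mu$ need not restrict to any of these $\kappa_q$ on $L\cap H_q$.  In fact $\mu$ is pinned down entirely by the requirement $\w\chi'\in\Irr(H\mid\nu)$, which only constrains the restriction of $\mu$ to $\Z(G)\cap H$; there remains a large ambiguity—any linear character of $H/M\Z(G)$—and the block identity on each $\spann<M,x>$ for $x\in(L\cap H)^0$ imposes genuinely finer constraints that a central-character adjustment cannot reach.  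The paper handles this by an intermediate step your outline skips: it first extends $\chi'\cdot\nu$ to $\psi\in\Irr(H)$, then constructs a twist $\epsilon$ on $L\cap H$ \emph{pointwise}, via the uniqueness in Murai's Theorem~4.1(iii), so that $\bl(\psi_{\spann<M,x>}\epsilon^{(x)})^{\spann<N,x>}=\bl(\w\chi_{\spann<N,x>})$ for every $x\in(L\cap H)^0$; showing that this pointwise-defined class function $\epsilon$ is actually a linear character requires Brauer's characterization of characters together with \cite[Theorem~C(b), Lemma~2.5]{KoshitaniSpaeth1}, and only then is $\psi_1=\psi_{L\cap H}\epsilon$ shown to extend to $H$ by checking each $H_q$ against $\kappa_q$.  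Your plan to ``follow the template of Lemma~\ref{prop3_7}'' does not transfer: that proof leans on the nilpotency of the blocks and a Morita equivalence, neither of which is available here.  The candid closing sentence of your write-up already identifies this reconciliation as the technical heart; the paper's answer to it is precisely the $\epsilon$-twist, which you would need to reinvent.
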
 

\renewcommand{\labelenumi}{(\alph{enumi})}
\renewcommand{\theenumi}{\thesubsection(\alph{enumi})}
\renewcommand{\labelenumii}{(\roman{enumii})}
\renewcommand{\theenumii}{(\roman{enumii})}

\begin{proof}
Let $\chi' \cdot \nu$ be the extension of $\chi'$ 
contained in $\Irr(M\Z(G)\mid \nu)$. 
Assumption (i) proves that $\kappa_q$ is contained in 
$\Irr(H_q\mid \nu_{\Z(G)\cap H_q})$. Hence $\kappa_q$ has an extension 
to $H_q\Z(G)$ that is an extension of $\chi'\cdot \nu$.
Assumption (ii) implies that $\chi \cdot \nu$ has an extension  
to $H_p\Z(G)$. Hence by \cite[Corollary (11.31)]{Isa} 
there exists some extension $\psi\in\Irr(H)$ of $\chi'\cdot\nu$. 

Using a construction already applied in the proof of Proposition 5.12 of \cite{NavarroSpaeth}
we define first successively a character 
$\epsilon:L\cap H \longrightarrow \CC$ with
\[ 
\bl(\psi_{J\cap H}\epsilon_{J\cap H})^J
= \bl(\w\chi_J)\text{ for every } J\text{ with } N\leq J\leq L.  
\]
For every element $x\in (L\cap H)^0$ we define 
$\epsilon^{(x)}$ to be a linear character of       
$\spann<M,x>$  with 
$M\leq \ker(\epsilon^{(x)})$ such that
\begin{align}\label{def_epsilon_x}
 \bl( \psi_{\spann<M,x>} \, \epsilon^{(x)})^{\spann<N,x>}     
= \bl(\w\chi_{\spann<N, x>}).\end{align}
Note that according to \cite[Theorem 4.1(iii)]{Murai_Dade}    
the character $\epsilon^{(x)}$ exists and is unique. 
For $x\in L\cap H$ we define $\epsilon^{(x)}$ with 
$(\epsilon^{(x)})_{\spann<M,x_{p'}>}= \epsilon^{(x_{p'})}$ and  
${\spann<M,x_{p}>} \leq \ker(\epsilon^{(x)})$.

The character $\epsilon$ is defined by 
\[ \epsilon(x)=\epsilon^{(x)}(x) \text{ for every }x\in L\cap H.\]
Note that by this definition $\epsilon$ is constant on $M$-cosets. 
Let $N\leq J\leq L$ be a group with $p\nmid |J:N|$. 
According to \cite[Theorem C(b)]{KoshitaniSpaeth1}, 
there exists a character $\delta\in\Irr(J\cap H)$ with 
$M\leq \ker(\delta)$ such that 
\begin{align*}
 \bl( \psi_{J\cap H}\delta)^{J}= \bl(\w\chi_{J}).\end{align*}
According to \cite[Lemma 2.5]{KoshitaniSpaeth1},
the character then also satisfies 
\begin{align*}
 \bl( \psi_{\spann<M,y>}\delta_{\spann<M,y>})^{\spann<N,y>}
= \bl(\w\chi_{\spann<N,y>})
\text{ for every } y\in L\cap J.\end{align*}
Since $\epsilon^{(y)}$ is uniquely defined by Equation 
\eqref{def_epsilon_x} we see that 
$\epsilon^{(y)} = \delta_{\spann<M,y>}$. By the definition of 
$\epsilon$ this implies 
\[ \epsilon_{J\cap H}=\delta_{J\cap H}.\]
Accordingly $\epsilon_E$ is a character for every group 
$E\leq L\cap H$ with $p\nmid |\spann<M,E>:M|$. 

In order to apply Brauer's characterization of characters, 
see for example Corollary (8.12) of \cite{Isa}, we have 
to consider $\epsilon_E$ for every elementary group
$E\leq L\cap H$ that is the direct product of some $p$-group 
$E_p$ and a $p'$-group $E_{p'}$. By the definition of 
$\epsilon$ the character $\epsilon\in\Irr(L\cap H)$ satisfies
\[ \epsilon(x)=\epsilon(x_p) \text{ for every } x\in L\cap H.\]
Accordingly $\epsilon_{E_{p'}}$ and hence $\epsilon$ are characters. 
The other remaining conditions from Brauer's 
characterization of characters are satisfied as well, 
since $\epsilon(1)=1$ and $(\epsilon,\epsilon)=1$ where 
$(\ ,\ )$ denotes the inner product on characters. 

Accordingly $\psi_{L\cap H} \epsilon$ is a well-defined 
character and the character satisfies by the definition of 
$\epsilon$ the equation 
\[ \bl(\psi_{J\cap H}\epsilon_{J\cap H})^J
=\bl(\w\chi_J)\text{ for every } J\text{ with } N\leq J\leq L.\]

By \cite[Theorem 3.5(i)]{Murai_Dade} (see 
\cite[Lemma 3.3 and Proposition 1.9]{DadeBlockExtensions}), 
any extension $\w\psi_1\in\Irr(H)$ 
of $\psi_1:=\psi_{L\cap H}\epsilon$ satisfies 
\[ \bl((\w\psi_1)_{J\cap H})^J
=\bl(\w\chi_J)\text{ for every } J\text{ with } N\leq J\leq G.\]
Hence it is sufficient to prove that $\psi_1$ extends to $H$. 
By Lemma \ref{lem6_4_ord} we only have to check that for 
any prime $q$ the character $(\psi_1)_{L\cap H_q}$ extends 
to $H_q$, where $H_q$ satisfies $M\leq H_q\leq H$ and 
$H_q/M\in \Syl_q(H/M)$.

For $q\neq p$ we note that $\kappa_q$ coincides with $(\psi_1)_{H_q\cap L}$ by the given construction
and hence Assumption (i) implies that $(\psi_1)_{H_q\cap L}$ 
extends to $H_q$. On the other hand 
$(\psi_1)_{H_p\cap L}=\psi_{H_p\cap L}$ 
is $H_p$-invariant and $\psi_{H_p}$ is an extension of 
$\psi_{H_p\cap L}$. Accordingly there exists an extension 
$\w\psi_1$ of $\psi_1$. By definition,                 
$\w\psi_1$ satisfies 
$\w\psi_1\in\Irr(H\mid \nu)$ and
\[ \bl((\w\psi_1)_{J\cap H})^J=\bl(\w\chi_J)
\text{ for every } J\text{ with } N\leq J\leq G.\qedhere\]
\end{proof}

\begin{thm}\label{thm6_6}
Let $p$ be an odd prime, $S$  a finite non-abelian simple group, and $G$ 
its universal covering group. Let $B\in\Bl(G)$  be a $p$-block with 
cyclic non-central defect group
$D$.                                 
Then the inductive AM condition holds for $B$. 
\end{thm}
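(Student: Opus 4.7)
The plan is to verify the three parts of Definition~\ref{def_ind_AM_block} in turn, taking $M := \NNN_G(D)$, using the bijection from Proposition~\ref{prop4_9} for (ii), and assembling the extension data in (iii) via the Sylow-gluing in Proposition~\ref{prop6_5} applied to extensions provided by Propositions~\ref{prop5_11} and \ref{prop5_12}. First, $M \lneq G$ because a cyclic normal $p$-subgroup of the quasisimple group $G$ would project to a normal subgroup of the non-abelian simple group $G/\Z(G)$ (which is not a $p$-group), forcing $D \leq \Z(G)$ and contradicting the non-centrality hypothesis; moreover $M$ is $\Aut(G)_{B,D}$-stable since any automorphism fixing $D$ fixes $\NNN_G(D)$. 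This gives (i). For (ii), every irreducible character of a block with cyclic defect has height zero, so $\Irr_0(B) = \Irr(B)$ and $\Irr_0(B') = \Irr(B')$; I would take $\Lambda_{B,D}^G := \Lambda_{B,D}$ from Proposition~\ref{prop4_9} applied with $N = G$, whose $\Aut(G)_{B,D}$-equivariance and central-character inclusion are exactly what (ii) demands.

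For (iii), fix $\chi \in \Irr_0(B)$, set $Z := \ker\chi \cap \Z(G)$ and $\o G := G/Z$, and apply Lemma~\ref{rem_exist_A_ord} to get a group $A = A(\chi)$ satisfying (iii)(a) together with an extension $\w\chi \in \Irr(A)$ of the character $\o\chi\in\Irr(\o G)$ that lifts to $\chi$, thereby verifying (iii)(b).

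To produce the required $\w\chi'$, I would invoke Proposition~\ref{prop6_5} inside the ambient group $A$, with $\o G$ in place of $N$, with $\o M := \NNN_{\o G}(\o D)$ in place of $M$ (so the $H$ of \ref{prop6_5} is simply $\NNN_A(\o D)$), and with $\o\chi'$ the character of $\o M$ lifting to $\Lambda_{B,D}(\chi)$; the well-definedness of the descent to the quotient is guaranteed by Corollary~\ref{cor5_2}. For each prime $q$, choose $A_q$ with $\o G \leq A_q \leq A$ and $A_q/\o G \in \Syl_q(A/\o G)$. The two hypotheses of Proposition~\ref{prop6_5} are then verified as follows: for $q \neq p$, Proposition~\ref{prop5_11} applies (since $p \nmid |A_q:\o G|$) and yields an extension of $\o\chi'$ to $\NNN_{A_q}(\o D)$ with the required block correspondence; for $q = p$, Lemma~\ref{prop5_12} applies (since $A_p/\o G$ is a $p$-group) and produces an extension with matching central character on $\Z(A) \cap A_p$. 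Proposition~\ref{prop6_5} then glues these local extensions into a single $\w\chi' \in \Irr(\NNN_A(\o D))$ satisfying both the central character equality in (iii)(d) on $\Cent_A(\o G) = \Z(A)$ and the block identity $\bl(\w\chi_J) = \bl((\w\chi')_{\NNN_J(\o D)})^J$ for every $\o G \leq J \leq A$.

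The main technical hurdle is exactly this uniform block identity over all intermediate $J$, which cannot be read off from any single extension. The approach circumvents this by constructing extensions Sylow-by-Sylow: Propositions~\ref{prop5_11} and \ref{prop5_12} have been engineered to supply precisely the block-correspondence data at primes $q \neq p$ and the central-character compatibility at $q = p$ needed as input to Proposition~\ref{prop6_5}, whose correction character $\epsilon$ then patches the pieces into a single extension enjoying the desired uniform block behavior on every subgroup between $\o G$ and $A$.
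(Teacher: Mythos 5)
Your proposal follows essentially the same route as the paper's proof: take $M = \NNN_G(D)$, use the bijection of Proposition~\ref{prop4_9} for (ii), construct $A$ via Lemma~\ref{rem_exist_A_ord}, descend to $\o G = G/Z$ via Corollary~\ref{cor5_2}, supply the Sylow-$q$ extensions from Propositions~\ref{prop5_11} ($q\neq p$) and~\ref{prop5_12} ($q=p$), and glue with Proposition~\ref{prop6_5}. Your explicit verification that $M\lneq G$ (a cyclic normal $p$-subgroup would force $D\leq \Z(G)$) and that $\Irr_0(B)=\Irr(B)$ in the cyclic case are correct small additions the paper leaves implicit, but the overall argument is the same.
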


\begin{proof}
We verify that the conditions from Definition \ref{def_ind_AM_block} 
are satisfied with $M:=\NNN_G(D)$. We take $\Lambda_{B,D}:\Irr_0(B)\longrightarrow \Irr_0(B')$ to    
be the bijection from Proposition \ref{prop4_9}. 
Then the assumptions made in the first two parts
 of Definition \ref{def_ind_AM_block} are satisfied, 
since $\Lambda_{B,D}^G$ has the required 
properties according to Proposition \ref{prop4_9}. 
It remains to check Condition (iii) from Definition \ref{def_ind_AM_block}. 

Let $\chi\in\Irr_0(B)$, $Z :=\ker(\chi)\cap\Z(G)$, $\o G:=G/Z$ and 
$\o \chi\in\Irr(\o G)$ the character associated to $\chi$, i.e. the character of $\o G$ that lifts to $G$. 
Then by Lemma \ref{rem_exist_A_ord} we can associate to $\chi$ a group $A$ 
such that $\o\chi$ extends to a character $\w\chi\in \Irr(A)$
and $A/\Cent_A(\o G)=\Aut(G)_\chi$ and $\Cent_A(\o G)=\Z(A)$. 
Further let $\o B\in \Bl(\o G)$ be the unique block of $\o G$ 
dominated by $B$. According to 
\cite[Theorem (9.9)]{Navarro} such a block exists. 
Calculations with the associated central character prove 
that this block is unique since $Z\leq \Z(G)$, see also 
\cite[Theorems 5.8.8 and 5.8.11]{NagaoTsushima}.   
For the block $\o B$ there exists a further bijection 
$\La^{\o G}_{\o B,\o D}$, see Proposition  \ref{prop4_9}.
According to Corollary \ref{cor5_2}, the character 
$\chi':=\Lambda_{B,D}(\chi)$ is a lift of 
$\o \chi':=\La_{\o B,\o D}(\o \chi)\in\Irr(\o M)$ with $\o M:=M/Z$.

Let $\w\chi\in\Irr(A)$ be the above mentioned extension of $\o\chi$. 
For every prime $q$ let $A_q$ be a subgroup with 
$\o G\leq A_q\leq A$ and $A_q/\o G\in \Syl_q(A/\o G)$. 
According to Proposition \ref{prop5_11}, 
for every prime $q\neq p$ and $\o D:=DZ/Z$ there exists an extension 
$\kappa_q\in\Irr(\NNN_{A_q}(\o D))$  of $\o \chi'$ to $A_q$ 
such that 
\[ \bl((\kappa_q)_{L\cap H_q})^{L\cap A_q}= \bl((\w \chi)_{L\cap A_q})\]
where $L := A[\bl(\o \chi)]$ and $H_q:= \NNN_A(\o D)\cap A_q$. According to Proposition
\ref{prop5_12} there exists an extension 
$\kappa_p\in\Irr(\NNN_{A_p}(\o D))$ of $\o\chi'$ 
such that for $\nu\in \Irr(\Z(A)\mid \w\chi)$ 
the character $\kappa_p$ is contained in $\Irr(\NNN_{A_p}(\o D)\mid \nu_{A_p\cap \Z(A)})$.

Now Proposition \ref{prop6_5} can be applied and proves that 
$\o \chi'$ has an extension $\w\chi'\in\Irr(\NNN_A(\o D))$ such that 
\[ \bl(\w\chi'_{J\cap \NNN_A(\o D)})^J=\bl(\w\chi_J)      
\text{ for every } J\text{ with } \o G\leq J\leq A.\]
This proves that for $\chi$ and hence for all characters 
in $\Irr_0(B)$ Part (iii) of Definition 
\ref{def_ind_AM_block} is satisfied. Accordingly the inductive 
AM condition holds for $B$. 
\end{proof}

In the situation of $p=2$ the proof is a bit more involved since 
bijections from Proposition  \ref{prop5_13} 
have less powerful properties, especially an analogue of 
Proposition \ref{prop5_12} is missing. 

\begin{thm}
Let $S$ be a finite non-abelian simple group, and $G$ its 
universal covering group. Let $B\in\Bl(G)$ be a $2$-block with 
cyclic non-central defect group. 
Then the inductive AM condition holds for $B$. 
\end{thm}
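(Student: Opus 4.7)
The plan is to mirror the proof of Theorem \ref{thm6_6}, taking $M := \NNN_G(D)$ and, for each $\chi \in \Irr_0(B)$, producing a finite group $A$ via Lemma \ref{rem_exist_A_ord} with $\o G \lhd A$, $A/\Cent_A(\o G) \cong \Aut(G)_\chi$, $\Cent_A(\o G) = \Z(A)$, and an extension $\w\chi \in \Irr(A)$ of $\o\chi \in \Irr(\o G)$, where $\o G := G/Z$ and $Z := \ker(\chi) \cap \Z(G)$. The new ingredient is that Proposition \ref{prop5_13} now supplies two candidate bijections $\La_{\o B, \o D}$ and $\La'_{\o B, \o D}$ for the block $\o B$ of $\o G$ dominated by $B$. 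I would assemble $\La_{B,D}^G$ orbit by orbit on $\Aut(G)_{B,D}$-orbits in $\Irr_0(B)$, choosing on each orbit whichever of the two sends $\o\chi$ above the character of $\Z(A)$ matching the central character of $\w\chi$. Corollary \ref{cor5_7} ensures compatibility with the quotient by $Z$, and the assembled map inherits $\Aut(G)_{B,D}$-equivariance from the equivariance of both candidates.

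For Condition (iii) of Definition \ref{def_ind_AM_block} I would apply Proposition \ref{prop6_5} with $N := \o G$, $G := A$, $M := \o M := M/Z$ and $H := \NNN_A(\o D)$, reducing the task to producing an extension $\kappa_q$ of $\o\chi'$ to $\NNN_{A_q}(\o D)$ for each prime $q$, where $A_q$ satisfies $\o G \le A_q \le A$ and $A_q/\o G \in \Syl_q(A/\o G)$. For odd $q$ the index $|A_q : \o G|$ is coprime to $p = 2$, so Proposition \ref{prop5_8} furnishes $\kappa_q$ satisfying condition (i) of Proposition \ref{prop6_5}. For $q = 2$ I would split into subcases: if $\chi$ is not $2$-rational, Proposition \ref{prop5_17} extends both $\La_{\o b, \o D}(\o\chi)$ and $\La'_{\o b, \o D}(\o\chi)$ to $\NNN_{A_2}(\o D)$, whence the one matching the prescribed $\nu \in \Irr(\Z(A) \mid \w\chi)$ serves as $\kappa_2$; if $\chi$ is $2$-rational, Lemma \ref{rem5_7c} forces $\o\chi \in \{\chi_1, \chi_1 * \delta\}$ and correspondingly $\o\chi' \in \{\chi_1', \chi_1' * \delta\}$. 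Since $B$ and hence $B'$ is nilpotent by Lemma \ref{rem5_7a}, the canonical character $\chi'_1$ descends to a defect zero character of $\NNN_{\o G}(\o D)/\o D$ and so extends to $\NNN_{A_2}(\o D)$ by \cite[Exercise (3.10)]{Navarro}, while an extension of $\chi'_1 * \delta$ is then produced via the $*$-construction \cite[Theorem 1(ii)]{Cabanes87} from this extension of $\chi'_1$ together with an extension of $\delta$ to a defect group of $\bl(\w\chi)$.

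Proposition \ref{prop6_5} then assembles the local data into a single $\w\chi' \in \Irr(\NNN_A(\o D) \mid \nu)$ extending $\o\chi'$ and satisfying $\bl(\w\chi'_{J \cap \NNN_A(\o D)})^J = \bl(\w\chi_J)$ for every $\o G \le J \le A$, which delivers the block-compatibility part of Definition \ref{def_ind_AM_block}(iii)(d); the central-character equality in (iii)(d) is forced by the choice of $\nu$ together with the orbitwise selection of the bijection. The main obstacle I anticipate is precisely the $2$-Sylow step for $2$-rational characters: there is no direct $p = 2$ analogue of Proposition \ref{prop5_12}, so the required extension must be extracted from the nilpotence of $B$ combined with the defect-zero-extension trick, and because Proposition \ref{prop5_13} offers two non-canonical bijections rather than a single equivariant one, the central-character compatibility has to be arranged by the orbitwise choice between $\La_{\o b, \o D}$ and $\La'_{\o b, \o D}$ rather than by invoking a single uniform statement.
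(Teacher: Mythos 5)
There is a genuine gap in your handling of the $2$-rational characters, and this is precisely where the paper's proof does the most work.

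Your proposal produces extensions $\kappa_2$ of $\o\chi'$ to $\NNN_{A_2}(\o D)$ (for $\chi'_1$ via the defect-zero trick of \cite[Exercise (3.10)]{Navarro}, and for $\chi'_1*\delta$ via the $*$-construction), but you never verify the central-character constraint required by hypothesis (ii) of Proposition \ref{prop6_5}: the character $\kappa_2$ must lie over the same character of $\Z(A)\cap A_2$ as $\w\chi$ does. If the Sylow $2$-subgroup $C_2$ of $\Z(A)$ is nontrivial and $\w\chi$ is faithful on it, the naive defect-zero extension of $\chi'_1$ (which is inflated from $\NNN_{\o G}(\o D)/\o D$) need not see $C_2$ in the prescribed way, and you give no argument that an extension with the correct $C_2$-restriction exists. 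The paper resolves this by choosing $\w\chi\in\Irr(A)$ so that the $2$-part of $|\ker(\w\chi)\cap\Z(A)|$ is maximal, passing to a quotient so $\w\chi$ is faithful on $\Z(A)$, and then examining the nilpotent block of $A_2/C_2$ dominated by $\bl(\w\chi_{A_2})$. Its $2$-rational height-zero character $\psi$ restricts to $\o G$ as either $\o\chi$ or $\o{\chi*\delta}$; in the first case the maximality forces $\Z(A)$ to be a $2'$-group (so the central obstruction disappears and one sets $\La_{B,D}(\chi):=\chi'_1$), while the companion character $\chi*\delta$ requires passing to a central extension $\wh A\to A$ furnished by \cite[Theorem 4.1]{NavarroSpaeth} because $\o{\chi*\delta}$ need not extend to $A$ itself with a compatible central character. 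This central-extension step is entirely absent from your plan.

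Moreover, your proposed rule for the orbitwise choice between $\La_{b,D}$ and $\La'_{b,D}$ — ``whichever sends $\o\chi$ above the character of $\Z(A)$ matching the central character of $\w\chi$'' — is not well-defined as stated: as the proof of Proposition \ref{prop5_13} shows, both $\chi'_1$ and $\chi'_1*\delta$ contain the Sylow $2$-part of $\Z(N)$ in their kernels and cover the same block of $\Z(N)_{2'}$, so their restrictions to $\Z(\o G)$ coincide and cannot distinguish them; and $\Z(A)$ is only available after the extension $\w\chi'$ has been chosen, which is exactly what is in question. The paper's criterion for which $2$-rational character is sent to $\chi'_1$ comes from the nilpotent-block analysis of $A_2/C_2$, not from a central-character comparison. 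Your identification of the $2$-Sylow step as the crux is correct, but the tools you invoke (defect-zero extension and the $*$-construction alone) do not close it.
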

\begin{proof}
Many considerations from the proof of Theorem \ref{thm6_6} 
apply here as well. Nevertheless several adaptations are necessary.

Let $D$ be a defect group of $B$.
For the proof of the inductive AM condition let
$M :=\NNN_G(D)$ and  $B'\in\Bl(M)$ the Brauer correspondent of $B$. 
In the following we choose 
 $\La_{B,D}:\Irr(B)\longrightarrow \Irr(B')$ to be a specific
bijection from Proposition  \ref{prop5_13}. 
With both possible choices of $\La_{B,D}$ Parts (i) and (ii)
of Definition \ref{def_ind_AM_block} are satisfied 
for $B$. 

In the next step we verify Condition \ref{def6_2iii} 
for any non-$2$-rational $\chi\in\Irr(B)$. 
Let $A$ be a group associated to $\chi$ from Lemma 
\ref{rem_exist_A_ord}. The character $\chi$ extends to $A$. 
The proof of Theorem \ref{thm6_6} 
can be transferred: according to Propositions \ref{prop5_8},
 \ref{prop5_17}, and \ref{prop6_5} applies here as well 
and $\La_{B,D}(\chi)$ and $\La_{B,D}'(\chi)$ extend to 
$\NNN_A(\o D)$, where $\o D:=DZ/Z$ with $Z := \ker(\chi)\cap\Z(G)$.
Note again that this does not depend on the choice 
of the bijection between $\Irr(B)\longrightarrow\Irr(B')$. 

It remains to verify that 
Condition \ref{def6_2iii}
holds for the remaining characters. 
Let $\chi\in\Irr(B)$ be one of the two $2$-rational characters.         
According to Proposition  \ref{prop5_13}, 
$\chi$ and $\chi*\delta$ are $\Aut(G)_B$-stable.
Hence $Z := \ker(\chi)\cap\Z(G)$ contains          
the Sylow $2$-subgroup of $\Z(G)$ 
since $D\not\leq\Z(G)$.                   
Let $\o\chi\in\Irr(\o G)$ with $\o G := G/Z$ be 
the character that lifts to $\chi$.   
Let $A$ be a group as in Lemma \ref{rem_exist_A_ord} and 
let $\w\chi\in \Irr(A)$ be an extension of $\o \chi\in\Irr(\o G)$ 
such that the $2$-part of $|\ker(\w\chi)\cap\Z(A)|$ is maximal. 
(The existence of $\w\chi$ is ensured by the choice of $A$.)
Without loss of generality we can assume that $\w\chi$ is 
faithful on $\Z(A)$ otherwise one replaces $A$ by its quotient. 
For every prime $q$ let $A_q$ be a subgroup $\o G\leq A_q\leq A$ 
with $A_q/\o G\in\Syl_q(A/\o G)$. Let $C_2$ be                   
a Sylow $2$-subgroup                                                   
of $\Z(A)$. The block $\o B$ of $A_2/C_2$ dominated by
$\bl(\w \chi_{A_2})$ is nilpotent according to Lemma \ref{rem5_14}. 
Hence $\Irr(\o B)$ contains a $2$-rational  character $\psi$ of height zero, whose 
restriction $\psi_0:=\psi_{\o G}$ is $2$-rational, 
hence $\psi_0\in \{\o \chi, \o{\chi*\delta}\}$.

Assume that $\psi$ is an extension of $\o\chi$.
Then $\o \chi$ extends to $A_2/C_2$. According to \cite[Theorem (6.26)]{Isa}, 
$\o\chi$ extends to $A/C_2$ as well. Hence the group 
$\Z(A)$ is a $2'$-group. We choose $\La_{B,D}$ such                 
that $\chi\longmapsto \chi'$, where                                
$\chi'\in\Irr( B')$ is the character with $D \leq \ker(\chi')$.
Accordingly 
$\La_{B,D}( \chi)=:\chi'$ and $\chi$ extend to 
$\NNN_{A_2}(\o D)$ and $A_2$, respectively. 
Let $\w\chi\in \Irr(A)$ be 
an extension of $\o \chi\in\Irr(\o G)$.                           
Applying Propositions \ref{prop5_8} and \ref{prop6_5} 
we see that Condition \ref{def6_2iii} holds for the character $\chi$. 

Now $\chi*\delta$ can also be seen as a character 
of $\o G := G/Z$, denoted by $\o{\chi*\delta}$. 
As we have mentioned before the stabilizers of $\chi$ and
${\chi*\delta}$ in $\Aut(G)_B$ coincide, hence 
the character $\o{\chi*\delta}$ is 
$A$-invariant.                     
By \cite[Theorem 4.1]{NavarroSpaeth}         
there exists a central extension $\epsilon:
\wh A \longrightarrow A$ by a cyclic group $C$, such that $\o G$ 
can be identified with a normal subgroup 
$\o G_1\lhd \wh A$ via an isomorphism $\epsilon_0:=\epsilon_{\o G_1}:\o G_1\rightarrow \o G$
 and $\zeta =( \o\chi*\delta)\circ \epsilon_0 \in\Irr(\o G_1)$ extends 
to $\wh A$. Let $\w \zeta\in\Irr(\epsilon^{-1}(A_2)) $ be 
an extension of $\zeta$ that extends to $\wh A$. 
Then $\w \zeta$ coincides with $\w\chi*\w\delta$ for 
some extension $\w\delta\in\Irr(\w D)$ of $\delta$, where 
$\w D$ is a defect group of 
$\bl(\w\chi)$ containing $\o D := DZ/Z$. 
Now $\La_{B,D}(\chi*\delta)=\chi'*\delta$ seen as a character of 
$\NNN_G(D)/Z$ has an extension $\o \chi'*\w\delta$ 
to $\epsilon^{-1}(\NNN_{A_2}(D)/Z)$ and hence has an extension to 
$\NNN_{\wh A}(\epsilon_0^{-1}(\o D))$ with the required properties.   
According to Propositions \ref{prop6_5} Condition \ref{def6_2iii} is 
satisfied for $\chi*\delta$, as well. 

It remains to consider the case where $\psi$ 
is an extension of $\o \chi*\delta$.  
Note that according to the definition then 
$\o \chi$ and $\o{\chi*\delta}$ extend to $A$. 
Then we choose $\La_{b,D}$ such 
that $\chi*\delta\longmapsto \chi'$, where
 $\chi'\in\Irr( B')$ is the character with $D \leq \ker(\chi')$.  
Accordingly 
$\La_{B,D}(\chi*\delta)=\chi'$ and $\chi*\delta$ extend to         
$\NNN_{A_2}(\o D)$ and $A_2$, respectively. 
Let $\w\phi\in \Irr(A)$ be 
an extension of $\o {\chi*\delta}\in\Irr(A/Z)$ such that 
the Sylow $2$-subgroup of $\Z(A)$ 
is contained in the kernel of $\w\phi$. 
(This character exists because of $\psi$, see Proposition \ref{prop6_5}.)
We observe that the character of $\NNN_G(D)/D$ that 
lifts to $\chi'$ has an extension $\w\phi'$ to 
$\NNN_A(\o D)$ that contains the Sylow $2$-subgroup in its kernel.     
By Proposition \ref{prop6_5} this proves together with Proposition \ref{prop5_8} 
that Condition \ref{def6_2iii} holds for the character $\chi*\delta$. 

The character $\chi$ has by definition also an extension 
to $A$ that is faithful on $\Z(A)$.
We can write this extension again as $\w\phi*\w\delta$ for 
some extension $\w\delta\in\Irr(\w D)$ of $\o\delta$, where 
$\w D$ is a defect group of 
$\bl(\w\chi)$ containing $\o D := DZ/Z$, and              
$\o\delta\in\Irr(\o D)$ is the character                 
of $\o D$ that lifts to $\delta$.                            
The character of $\NNN_G(D)/D$ that lifts to $\chi'*\delta$ has the extension 
$\w\phi'*\w\delta$ to $\NNN_{A_2}(\o D)$.
The characters $\w\phi*\w\delta$ and $\w\phi'*\delta$ cover 
the same character of $\Z(A)\cap A_2$ by definition. By 
Proposition \ref{prop6_5} this proves together with Proposition \ref{prop5_8} 
that Condition \ref{def6_2iii} holds for the character $\chi*\delta$. 
\end{proof}

We finally mention that the IN-refinement of 
the inductive AM condition                    
for blocks with cyclic defect groups holds.    

\begin{thm}\label{thm5_8}
The inductive AM condition holds together with  
the IN-refinement 
for blocks of universal covering groups of non-abelian 
simple groups with cyclic defect groups.
\end{thm}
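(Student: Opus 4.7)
The two theorems preceding this statement have already produced a bijection $\La_{B,D}^G$ (from Proposition \ref{prop4_9} for odd $p$, or Proposition \ref{prop5_13} for $p=2$) satisfying conditions (i)--(iii) of Definition \ref{def_ind_AM_block}. Since a block with cyclic defect groups has all its irreducible characters of height zero, $\Irr_0(B)=\Irr(B)$, and it remains only to verify the Isaacs-Navarro degree congruence
\[
\La_{B,D}^G(\chi)(1)_{p'} \;\equiv\; \pm\,|G:\NNN_G(D)|_{p'}\,\chi(1)_{p'} \pmod{p}
\]
for every $\chi\in\Irr(B)$, where $B'\in\Bl(\NNN_G(D))$ denotes the Brauer correspondent of $B$.

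The plan is to reduce this to a congruence at the level of Brauer characters. Let $\Pi_{B,D}:\IBr(B)\longrightarrow\IBr(B')$ denote the Green correspondence bijection of Lemma \ref{prop_bij_IBr}. Using Theorem \ref{dec_matrix_unitriangular} (and its counterpart for $B'$, whose Brauer tree is a star around the exceptional vertex), each ordinary character degree in $B$ or $B'$ is a sum of Brauer character degrees corresponding to the edges of the Brauer tree incident to the relevant vertex. Moreover, the construction of $\La_{B,D}^G$ in Propositions \ref{prop4_9} and \ref{prop5_13} matches $\Pi_{B,D}$ at the level of the Brauer tree: non-exceptional ordinary characters are lifted successively via Green correspondence at $\NNN_N(D_i)/D_i$, and exceptional characters arise from $\chi_1$ via the $*$-construction (and, for $p=2$, also with $\delta$), an operation which only tensors with linear characters of $D$ and therefore preserves $p'$-parts of degrees modulo $p$. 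Hence it suffices to establish the Brauer character congruence
\[
\phi(1) \;\equiv\; \pm\,|G:\NNN_G(D)|_{p'}\,\Pi_{B,D}(\phi)(1) \pmod{p} \quad \text{for every } \phi\in\IBr(B).
\]

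This Brauer character congruence I would derive from standard properties of the Green correspondence for cyclic blocks. If $V$ affords $\phi$ and $W := \f V$ affords $\Pi_{B,D}(\phi)$, then by Burry's theorem (recalled in Lemma \ref{p'IndexGreen}(c)) the module $V$ occurs in $W^G$ with multiplicity one; the remaining summands have vertices strictly contained in $D$, and their dimensions contribute a multiple of $p$ after factoring out $|G:\NNN_G(D)|_{p'}$. This yields $\dim V \equiv \pm |G:\NNN_G(D)|_{p'}\dim W \pmod{p}$, with the sign controlled by the explicit Green correspondence recipe on $p$-regular elements described in \cite{Dade66}.

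The hard part will be the coherent bookkeeping of signs: when an ordinary character $\chi_i$ is written as $\sum_j \phi_j(1)$ over the edges $j$ of the Brauer tree incident to the vertex $i$, the individual Brauer character congruences must combine into a single well-defined sign for $\chi_i$. I would handle this by fixing an orientation of the Brauer tree outward from the exceptional vertex and checking that Green correspondence respects this orientation sign-coherently; the exceptional characters then inherit a common sign since they share a common $p'$-part of degree. For $p=2$ one additionally notes that the two possible bijections $\La_{B,D}$ and $\La'_{B,D}$ from Proposition \ref{prop5_13} differ only by tensoring with the linear character $\delta$, so if the congruence holds for one it holds for the other.
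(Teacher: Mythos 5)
The paper's proof is one sentence: it observes that the bijection $\La_{B,D}^G$ from Proposition \ref{prop4_9} is the same as the one constructed by Isaacs and Navarro in the proof of Theorem 2.1 of \cite{IsaacsNavarro}, who already established the degree congruence for cyclic blocks; for $p=2$ the congruence is automatic since all the $2'$-parts are odd. Your proposal ignores this shortcut and instead attempts a direct proof via Green correspondence and Brauer-tree combinatorics. That is a genuinely different route, and it does not currently go through.

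There are two concrete gaps. First, your Brauer-character congruence is derived from a dimension count on $\f V$, but $\Pi_{B,D}(\phi)$ is afforded by the \emph{head} of $\f V$, not by $\f V$ itself (Lemma \ref{prop_bij_IBr}); the Green correspondent $\f V$ is indecomposable but typically not simple, so the multiplicity-one statement from Burry's theorem does not control $\dim(\mathrm{head}(\f V))$, and the asserted congruence
\[
\phi(1) \equiv \pm\,|G:\NNN_G(D)|_{p'}\,\Pi_{B,D}(\phi)(1) \pmod p
\]
is not obtained by the argument you sketch. Second, the passage from a Brauer-character congruence back to the ordinary-character congruence is more than sign bookkeeping: on the $G$-side a non-exceptional $\chi_i(1)$ is a sum of several $\phi_j(1)$ over edges incident to a vertex of the Brauer tree, whereas on the $\NNN_G(D)$-side the tree is a star so $\chi_i'(1)$ equals a single $\phi_{j_0}'(1)$, and the $p'$-part operation does not distribute over sums. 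The needed cancellations do hold for cyclic blocks, but they require the full force of Dade's degree relations (the alternating-sum structure along paths of the Brauer tree), which is precisely what Isaacs--Navarro's proof encodes. In short, you have correctly identified that the congruence should be verifiable from the cyclic-block data, but your reduction is not sound as written; citing \cite{IsaacsNavarro}, as the paper does, is both correct and much shorter.
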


\begin{proof}
The bijection constructed in Proposition  \ref{prop4_9}  
coincides with the bijection used in the proof of 
Theorem 2.1 of \cite{IsaacsNavarro}.  
Hence for a block $B\in\Bl(G)$ with cyclic defect group $D$ 
there exists a bijection $\La_{B,D}^G$ satisfying 
the conditions in Definition \ref{def_ind_AM_block} and 
\[ \La_{B,D}^G(\chi)(1)_{p'}
\equiv \pm |G:\NNN_G(D)|_{p'} \,\,  \chi(1)_{p'}             
\mod p \quad \text{ for every }\chi\in\Irr(B).\]           
Since for $p=2$ the equation holds this proves the statement.
\end{proof}
As an immediate consequence we obtain the following result that 
is a generalization of Corollary 8.3(b) of \cite{Spaeth_AM_red}.
\begin{cor}
Let $S$ be a finite simple non-abelian group whose universal covering group has a cyclic Sylow $p$-subgroup. Then the inductive AM condition holds for $S$ with respect to $p$, in particular $S$ satisfies the inductive McKay conditions from Section 10 of \cite{IsaacsMalleNavarro}.
\end{cor}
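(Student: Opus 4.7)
The plan is to reduce the corollary to Theorem \ref{thm5_8} via the elementary observation that if a Sylow $p$-subgroup of $G$ is cyclic, then every $p$-subgroup of $G$, and in particular every defect group of every $p$-block of $G$, is cyclic.

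First I would recall that the inductive AM condition for $S$ with respect to $p$, as formulated in \cite{Spaeth_AM_red}, is the conjunction of the blockwise conditions of Definition \ref{def_ind_AM_block} taken over an $\Aut(G)$-transversal of $\Bl(G)$; this is the AM-analogue of the partitioning of the inductive BAW condition carried out in the Lemma following Definition \ref{def_BAWC}. Central-defect blocks (for which $\NNN_G(D) = G$, so no proper $M$ as in Definition \ref{def_ind_AM_block} exists) fall outside the blockwise framework of Definition \ref{def_ind_AM_block} but are handled trivially in the global formulation, since there the Alperin--McKay bijection collapses to the identity on $\Irr_0(B)$.

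Under the hypothesis that the Sylow $p$-subgroup of the universal covering group $G$ of $S$ is cyclic, every defect group $D$ of every $B \in \Bl(G)$ is a subgroup of a conjugate of such a Sylow subgroup, hence cyclic. Theorem \ref{thm5_8} therefore applies to each $p$-block of $G$ with non-central defect, yielding both the inductive AM condition of Definition \ref{def_ind_AM_block} and its IN-refinement. Reassembling these blockwise statements across $\Bl(G)$, we obtain the inductive AM condition with the IN-refinement for $S$ with respect to $p$. The final clause about the inductive McKay conditions of \cite{IsaacsMalleNavarro} then follows from the IN-refinement: as explained in Proposition 7.8 of \cite{Spaeth_AM_red}, those conditions are a formal consequence of the inductive AM condition sharpened by the Isaacs--Navarro congruence on $p'$-parts of character degrees.

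The substantive work has already been absorbed into Theorem \ref{thm5_8}, so this corollary is essentially bookkeeping; there is no real obstacle. The only points meriting a moment's attention are that the blockwise conditions of Definition \ref{def_ind_AM_block} partition the global condition (which is by construction) and that central-defect blocks are taken care of separately in the global formulation.
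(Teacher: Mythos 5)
Your proof is correct and follows essentially the same route the paper intends: the corollary is stated as an immediate consequence of Theorem \ref{thm5_8}, and you fill in the two bookkeeping steps the paper leaves implicit, namely (a) a cyclic Sylow $p$-subgroup forces every defect group to be cyclic, so Theorem \ref{thm5_8} covers every block with non-central defect, and (b) the blockwise conditions of Definition \ref{def_ind_AM_block} reassemble into the global inductive AM condition of \cite{Spaeth_AM_red} once the central-defect blocks (which are trivially satisfied there) are dispatched. The only small imprecision is in the final sentence: the inductive McKay condition of Section~10 of \cite{IsaacsMalleNavarro} is already a consequence of the inductive AM condition itself, without appealing to the IN-refinement --- the Isaacs--Navarro congruence is an extra strengthening not needed for that implication --- so the specific appeal to the IN-refined statement (and the citation you give for it) is unnecessary, though harmless here since Theorem \ref{thm5_8} does supply it.
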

Note that according to Table 2.2 of \cite{GLS3} this proves the inductive AM condition for groups of type $^3\mathrm D_4(q)$ for $p$ whenever $p^2\nmid (q^6-1)$ and $p\geq 5$, see  Corollary 7.3 of \cite{CS12}.

\section{The inductive blockwise Alperin weight condition
for blocks with cyclic defect groups   
}\label{sec7}
\noindent 
In this section we prove that the inductive blockwise Alperin 
weight condition (BAW condition, for short) holds 
for blocks with cyclic defect groups. 
We give the proof in two steps. First we prove that 
under certain additional assumptions the inductive AM condition 
for a block with abelian defect groups 
implies the inductive BAW condition for the corresponding block, 
see Theorem \ref{thm_inductiveAM_inductiveBAW}. 
Secondly we verify the second part of Theorem \ref{thm_BAW_cycl}
by applying this statement. 

It is clear that the last part of the inductive 
AM condition and the inductive BAW condition have similarities. 
Further if the considered block has abelian defect groups,
then the involved characters for the local situation belong
to the same blocks.

In order to pass from ordinary characters to 
Brauer characters we consider the decomposition matrix and 
its submatrices. Theorem \ref{thm1_2} assumes that there exists a unitriangular 
submatrix 
and states that the inductive AM condition implies the inductive BAW condition. In some sense it is a generalization of 
Theorem 3.8 in \cite{Malle_AWC_Alt} and Theorem 7.4 of \cite{CS12}, 
where weaker statements for groups of Lie type are given.

\renewcommand{\proofname}{Proof of Theorem \ref{thm1_2}}
\begin{proof}
Let  $e := |\calS|$, $\{\phi_1,\ldots ,\phi_e\}=\IBr(B)$ 
and $\{\chi_1,\ldots,\chi_e\} = \calS$ such that 
the associated decomposition matrix is unitriangular. 
Via $\chi_i\mapsto \phi_i$ this gives a bijection 
between $\calS$ and $\IBr(B)$. Furthermore there exists 
a natural correspondence between $\dz(\NNN_G(D)/D, B)$ 
and $\{ \chi'\in\Irr(B')\mid D\leq \ker(\chi')\}$ via 
lifting. Together with these bijections,
$\Lambda_B^G$ induces a bijection 
\[ \Omega^G_D:\IBr(B)\longrightarrow \dz(\NNN_G(D)/D,B).\]
Since the decomposition matrix $C'$ is unitriangular, 
$\Omega_Q^G$ is $\Aut(G)_{B,D}$-equivariant 
according to the considerations made in the proof 
of Theorem 7.4 of \cite{CS12}. 

Since the inductive AM condition holds for $B$ and for               
every character $\chi_i\in \calS$, there exists a finite group 
$A_i$ and characters $\w\chi_i$ and $\w\chi'_i$ such that              
\begin{itemize}
\item For $Z:=\ker(\chi_i)\cap\Z(G)$ and $\o G:=G/Z$ 
the group $A_i$ satisfies $\o G\lhd A_i$, 
$A_i/\Cent_{A_i}(\o G)=\Aut(G)_{\chi_i}$ and 
$\Cent_{A_i}(\o G)=\Z(A_i)$.
\item $\w\chi_i\in\Irr(A_i)$ is an extension of                     
the character $\o\chi_i\in\Irr(\o G)$ determined by $\chi_i$. 
\item For $\o D:= DZ/Z$ and $\o M:= M/Z$ let 
$\o\chi'_i\in\Irr(\o M)$ be the character defined by                  
$\Lambda_D^G(\chi_i)\in\Irr_0(M\mid D)$. 
Then $\w\chi'_i\in\Irr(\o M\NNN_{A_i}(\o D))$                       
is an extension of $\o\chi'_i$.
\item The characters satisfy 
\[ \Irr(\Cent_{A_i}(\o G) \mid \w\chi_i)                            
= \Irr(\Cent_{A_i}(\o G) \mid \w\chi'_i)\]                          
and
\[ \bl( ({\w\chi_i})_J) = 
    \bl\Big( ({\w\chi'_i})_{\o M\NNN_J(D)}\Big)^J                       
\text{ for every } J \text{ with }\o G\leq J\leq A_i.\]
\end{itemize}
We consider the situation for a fixed $\chi_i$. 
The character $\phi_i$ occurs with multiplicity one in 
$(\chi_i)^0$ and $\o\phi_i\in\IBr(\o G)$ determined by 
$\phi_i$ is a constituent with multiplicity one 
in $(\o\chi_i)^0$. Note that $\o\phi_i$ is $A_i$-invariant.               
Hence $(\w\chi_i)^0$ has a constituent $\psi\in \Irr(A_i\mid \o\phi_i)$. 
The character 
$\psi_{\o G}$ is a constituent of $(\chi_i)^0$. 
Since $\o \phi_i$ occurs with multiplicity one,
this proves $\psi_{\o G}=\o\phi_i$. Hence $\psi$ is 
an extension of $\o\phi_i$. By definition the character $\psi$ satisfies 
\[ \bl(\psi_J)=\bl(\w\chi_J) \text{ for every } J                    
\text{ with } N\leq J\leq A_i.\]
Note that because of $\Aut(S)_{\chi_i} = \Aut(S)_{\phi_i}$, 
the group $A_i$ associated to $\chi_i$ satisfies 
the properties mentioned in Definition \ref{def_BAWC}(iii)(a), 
at least after taking the quotient over the $p$-part of the center. 

By the definition of $\Omega^G_D$ the character 
$\La^G_D(\chi_i)$ is the lift of $\Omega^G_D(\phi_i)$.
Let $\theta'$ be the character of $\o M$

determined by $\La^G_D(\chi_i)$. By the inductive AM condition,
$\chi'$ has an extension $\w\chi'\in\Irr(\NNN_A(\o D)\o M)$. 
Let $\psi':=\w\chi^0$. This character is irreducible 
since $\psi'$ is an extension of $(\o\chi_i)^0$ and 
$(\o\chi_i)^0$ is irreducible. 
By the definition of $\psi$ we may conclude
\[ \bl((\psi')_{\NNN_J(D)})^J = \bl((\w\chi_i)_{\NNN_J(D)})^J
 = \bl((\w\chi_{i})_J)=\bl(\psi_J) \text{ for every } J 
\text{ with } N \leq J\leq A_i.\]

This proves that $\Omega^G_Q$ defines a bijection 
with all properties required in Definition \ref{def4_2}, 
and hence $\o B$ satisfies the inductive BAW condition, 
where $\o B$ is the block of the universal $p'$-covering 
group dominated by $B$. 
\end{proof}

We can apply the above criterion for blocks with cyclic defect groups and 
prove thereby that those blocks satisfy the inductive BAW condition.

\renewcommand{\proofname}{Proof of Theorem \ref{thm_BAW_cycl}}
\begin{proof}
Let $B$ be a $p$-block of the universal covering group $G$ of a non-abelian 
simple group $S$ with cyclic defect group. 
According to Theorem \ref{thm5_8} the inductive AM condition holds for $B$. 
For the verifications one uses the bijections $\Lambda_{B,D}^G$ from Propositions 
\ref{prop4_9} and \ref{prop5_13}. Hence the group  $M$ chosen in the 
verification coincides with $\NNN_G(D)$. 

The decomposition matrix of $B$ has been described in Theorem 
\ref{dec_matrix_unitriangular} and is unitriangular. 
Accordingly the assumptions of Theorem \ref{thm_inductiveAM_inductiveBAW} 
are satisfied and hence the inductive BAW condition holds for $B$. 
 
Let $\o B$ be a block of $\o G:= G/Z_p$ with cyclic defect group where $Z_p$ is the Sylow $p$-subgroup of $\Z(G)$. 
Note that $\o B$ might be dominated by a block $B$ of $G$ with non-cyclic defect group. 
Although we haven't proven that $B$ satisfies the inductive AM condition, the previous section 
gives a bijection with the necessary properties. Hence an adapted version of 
Theorem \ref{thm_inductiveAM_inductiveBAW} can be applied in that case and proves the statement.
\end{proof}
\renewcommand{\proofname}{Proof}

We apply Theorem \ref{thm_BAW_cycl} for some simple groups of Lie type. Note that this list is 
by far not complete, and only lists cases where the outer automorphism group is non-cyclic. 
(In the case of cyclic outer automorphism group the result is known from \cite[Corollary 8.3(b)]{Spaeth_AM_red} and \cite[Proposition 6.2]{Spaeth_red_BAW}.)
\begin{cor}\label{cor_explizit}
Let $S$ be a finite non-abelian simple group of Lie type with a non-exceptional covering and $p$ a prime dividing $|S|$ with $p\geq 5$ different from the defining characteristic of $S$. Then the inductive AM and BAW conditions hold for $S$ and $p$ in the following cases 
\begin{itemize}
\item $S$ is of type $\mathrm A_{n}(q)$, $p>n+1$ and $d$ divides only one integer in $\{2,3,\cdots, n+1 \}$, where $d$ is the order of $q$ in $(\ZZ/p\ZZ)^*$,

\item $S$ is of type $^2\mathrm A_{n}(q)$, $p>n+1$ and $d'$ divides only one integer in $\{2,3,\cdots, n+1 \}$, where $d'$ is the order of $(-q)$ in $(\ZZ/p\ZZ)^*$,

\item $S$ is of type $\mathrm B_n(q)$ or $\mathrm C_n(q)$ , $p> n$ and $d$ divides only one integer in $\{2,4,\cdots , 2n \}$, where $d$ is the order of $q$ in $(\ZZ/p\ZZ)^*$,

\item $S$ is of type $\mathrm D_n(q)$, $p> n$ and $d$ divides only one integer in $\{2,4,\cdots , 2n-2,n \}$, where $d$ is the order of $q$ in $(\ZZ/p\ZZ)^*$,
\item $S$ is of type $^2\mathrm D_n(q)$, $p> n$ and $d$ divides only one integer in $\{2,4,\cdots , 2n-2,2n \}$, where $d$ is the order of $q$ in $(\ZZ/p\ZZ)^*$,
\item $S$ is of type $\mathrm D_4(q)$, $p\nmid (q^4-1)$ and $p\geq 5$,
\item $S$ is of type $\mathrm E_6(q)$, $p\nmid (q^4-1)(q^6-1)$ and $p\geq 7$,
\item $S$ is of type $^2\mathrm E_6(q)$, $p\nmid (q^4-1)(q^6-1)$ and $p\geq 7$, and
\item $S$ is of type $\mathrm E_7(q)$, $p\nmid (q^4-1)(q^6-1)$ and $p\geq 11$.
\end{itemize}
\end{cor}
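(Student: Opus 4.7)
The plan is to reduce the corollary to Theorem \ref{thm_BAW_cycl} by verifying, in each of the listed cases, that every $p$-block of the universal covering group $G$ of $S$ has cyclic defect groups. Since every defect group is contained in a Sylow $p$-subgroup of $G$ and subgroups of cyclic groups are cyclic, it suffices to prove that the Sylow $p$-subgroups of $G$ are cyclic. Once this is established, Theorem \ref{thm_BAW_cycl} immediately supplies the inductive AM condition for every $p$-block of $G$ and the inductive BAW condition for every $p$-block of $\o G$, and the corollary follows.

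To prove cyclicity of the Sylow $p$-subgroup, the key tool is the cyclotomic factorization of $|G|$ combined with a lifting-the-exponent computation. For a prime $p$ coprime to $q$ with $d := \mathrm{ord}_p(q)$, one has $p \mid q^i-1$ iff $d \mid i$, and for odd $p$ one has $v_p(q^i-1) = v_p(q^d-1) + v_p(i/d)$ whenever $d \mid i$. In type $\mathrm A_n$ the relevant factors in the order polynomial are $q^i-1$ for $i=2,\ldots,n+1$; the hypothesis that $d$ divides only one such $i$ isolates a single cyclotomic factor, and the bound $p>n+1$ forces $v_p(i/d)=0$, so the $p$-part of $|G|$ equals $(q^d-1)_p$. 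By the structure theory of Sylow subgroups of finite reductive groups (for instance, the description via $d$-split Levi subgroups due to Brou\'e--Malle), this Sylow sits inside a cyclic subtorus of a maximal $d$-torus and is therefore cyclic. The arguments for $^2\mathrm A_n$, $\mathrm B_n$, $\mathrm C_n$, $\mathrm D_n$ and $^2\mathrm D_n$ are analogous, using the appropriate cyclotomic factorizations, with $\mathrm{ord}_p(-q)$ replacing $d$ for $^2\mathrm A_n$ and with factors of the form $q^{2i}-1$ and $q^n\pm 1$ governing the orthogonal and symplectic cases. For each of $\mathrm D_4$, $\mathrm E_6$, $^2\mathrm E_6$ and $\mathrm E_7$ the hypothesis $p\nmid(q^4-1)(q^6-1)$ together with the explicit lower bound on $p$ directly eliminates all but at most one cyclotomic factor in the generic order polynomial, once again yielding a cyclic Sylow.

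The main obstacle will be the routine but careful case-by-case bookkeeping needed to check that the stated numerical condition on $p$ and $d$ really does translate into exactly one cyclotomic factor contributing to the $p$-part of $|G|$; the twisted types and type $\mathrm D_n$ (whose order polynomial mixes $q^n-1$-type and $q^{2i}-1$-type factors) require the most attention. A secondary point to verify is that the passage from $S$ to the universal covering group $G$ and then to the universal $p'$-covering group $\o G$ does not alter the Sylow $p$-structure; this is ensured by the bound $p\geq 5$ (respectively $p\geq 7$ or $p\geq 11$ in the exceptional cases) together with the exclusion of exceptional Schur multipliers, since both the generic Schur multiplier and the group of diagonal automorphisms of $S$ then have order coprime to $p$. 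Once cyclicity of the Sylow $p$-subgroup is established in each listed case, Theorem \ref{thm_BAW_cycl} completes the proof.
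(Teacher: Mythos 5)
Your proposal is correct and follows essentially the same route as the paper: in each listed case one shows that the universal covering group $G$ of $S$ has cyclic Sylow $p$-subgroups, so that every $p$-block of $G$ (and of its $p'$-covering group) automatically has cyclic defect groups, and then Theorem \ref{thm_BAW_cycl} applies. The paper simply cites the precise numerical criterion of Brou\'e and Malle (Corollaire 3.12 of \cite{gensyl}) together with the Schur-multiplier data from Table 2.2 of \cite{GLS3}, rather than re-deriving the cyclicity from the cyclotomic factorization and the $d$-torus structure as you sketch, but the underlying mechanism you describe is exactly what that reference encapsulates.
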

\begin{proof}
According to \cite[Corollaire 3.12]{gensyl}, together with \cite[Table 2.2]{GLS3} the universal covering group $G$ of $S$  has in the above listed cases cyclic Sylow $p$-subgroups.
Hence every $p$-block of $G$ has cyclic defect groups. Then Theorem \ref{thm_BAW_cycl} implies the statement. 
\end{proof}


{\bf Acknowledgement:} A part of this work was done while the first author was visiting the Department
of Mathematics, TU Kaiserslautern in December 2012 and October 2013. 
He is grateful to Gunter Malle for his kind hospitality and the Deutsche Foschungsgemeinschaft, SPP 1388.
The authors thank Markus Linckelmann  and Andrei Marcus 
for helpful explanations.

\def\cprime{$'$}

\end{document}